\newtheorem{theorem}{Theorem}[section]
\newtheorem{LE}[theorem]{Lemma}
\newtheorem{PR}[theorem]{Proposition}
\newtheorem{CN}[theorem]{Conjecture}
\newtheorem{RE}[theorem]{Remark}
\newtheorem{DE}[theorem]{Definition}
\newcounter{claim_nb}[theorem]
\newtheorem{claim}[claim_nb]{Claim}
\newtheorem*{claim*}{Claim}
\newcommand{\tr}{\triangle}
\newcommand{\ignore}[1]{}
\newcommand{\conv}{\mathrm{conv}}
\newcommand{\supp}{\mathrm{support}}
\newcommand{\cuboid}{\mathrm{cuboid}}
\newcommand{\cycle}[1]{\mathrm{cycle}\!\left(#1\right)}
\newcommand{\cocycle}[1]{\mathrm{cocycle}\!\left(#1\right)}
\newcommand{\core}[1]{\mathrm{core}\!\left(#1\right)}
\newcommand{\setcore}[1]{\mathrm{setcore}\!\left(#1\right)}
\newcommand{\rank}[1]{\mathrm{rank}\!\left(#1\right)}
\newcommand{\depth}[1]{\mathrm{depth}\!\left(#1\right)}
\newenvironment{cproof}
{\begin{proof}
[Proof of Claim.]
\vspace{-1.2\parsep}}
{\renewcommand{\qed}{\hfill $\Diamond$} \end{proof}}
\title{Clean tangled clutters, simplices, and projective geometries}
\author{Ahmad Abdi \and G\'{e}rard Cornu\'{e}jols \and Matt Superdock}
\begin{document}
	
\maketitle
	
\begin{abstract} 
A clutter is \emph{clean} if it has no delta or the blocker of an extended odd hole minor, and it is \emph{tangled} if its covering number is two and every element appears in a minimum cover. Clean tangled clutters have been instrumental in progress towards several open problems on ideal clutters, including the $\tau=2$ Conjecture.

Let $\mathcal{C}$ be a clean tangled clutter. It was recently proved that $\mathcal{C}$ has a fractional packing of value two. Collecting the supports of all such fractional packings, we obtain what is called the {\it core} of $\mathcal{C}$. The core is a duplication of the cuboid of a set of $0-1$ points, called the {\it setcore} of $\mathcal{C}$.

In this paper, we prove three results about the setcore. First, the convex hull of the setcore is a full-dimensional polytope containing the center point of the hypercube in its interior. Secondly, this polytope is a simplex if, and only if, the setcore is the cocycle space of a projective geometry over the two-element field. Finally, if this polytope is a simplex of dimension more than three, then $\mathcal{C}$ has the clutter of the lines of the Fano plane as a minor.

Our results expose a fascinating interplay between the combinatorics and the geometry of clean tangled clutters.

\smallskip
\noindent \textbf{Keywords.} Clutters, ideal clutters, odd holes, degenerate projective planes, projective geometries over the two-element field, simplices.
\end{abstract}

\newpage

\tableofcontents

\newpage

\section{Introduction}

A \emph{clutter} is a family $\mathcal{C}$ of subsets of a finite set $V$ where no set contains another one~\cite{Edmonds70}. We refer to $V$ as the \emph{ground set}, to the elements in $V$ simply as \emph{elements}, and to the sets in $\mathcal{C}$ as \emph{members}. A \emph{transversal} is any subset of $V$ that intersects every member exactly once, whereas a \emph{cover} is any subset of $V$ that intersects every member at least once. A cover is \emph{minimal} if it does not contain another cover. The family of the minimal covers of $\mathcal{C}$ forms another clutter over ground set $V$; this clutter is called the \emph{blocker of $\mathcal{C}$} and is denoted $b(\mathcal{C})$. It is well-known that $b(b(\mathcal{C}))=\mathcal{C}$~\cite{Edmonds70,Isbell58}. Given disjoint $I,J\subseteq V$, the \emph{minor of $\mathcal{C}$} obtained after \emph{deleting $I$} and \emph{contracting $J$} is the clutter $\mathcal{C}\setminus I/J$ over ground set $V-(I\cup J)$ whose members are the inclusion-wise minimal sets in $\{C-J:C\in \mathcal{C}, C\cap I=\emptyset\}$. It is well-known that $b(\mathcal{C}\setminus I/J)=b(\mathcal{C})/I\setminus J$~\cite{Seymour76}.

A \emph{delta} is any clutter over a ground set of cardinality at least three, say $\{a_1,a_2,a_3,\ldots,a_n\}$, whose members are $\{a_2,a_3,\ldots,a_n\}$ and $\{a_1,a_i\}, i=2,\ldots,n$. (See \Cref{fig:delta-oddhole-K4}.) Observe that a delta is \emph{identically self-blocking}, that is, every member is a minimal cover, and vice versa. Observe further that the elements and members of a delta correspond to the points and lines of a degenerate projective plane.

An \emph{extended odd hole} is any clutter over a ground set of cardinality at least five and odd, say $\{a_1,\ldots,a_n\}$, whose minimum cardinality members are $\{a_1,a_2\},\{a_2,a_3\},\ldots,\{a_{n-1},a_n\},\{a_n,a_1\}$. That is, the minimum cardinality members correspond to the edges of an odd hole. (See \Cref{fig:delta-oddhole-K4}.) Note that there may exist other members, but those members would have cardinality at least three. Observe that every cover of an extended odd hole with $n$ elements has cardinality at least $\frac{n+1}{2}$.

\begin{DE}[\cite{Abdi-dyadic}] A clutter is \emph{clean} if it has no minor that is a delta or the blocker of an extended odd hole. \end{DE}

Observe that if a clutter is clean, then so is every minor of it. Clean clutters were introduced recently and a polynomial recognition algorithm was provided for them~\cite{Abdi-int-rest}. The class of clean clutters includes \emph{ideal} clutters, clutters without an \emph{intersecting minor}, and \emph{binary} clutters~\cite{Abdi-dyadic}.

The \emph{covering number} of a clutter $\mathcal{C}$, denoted $\tau(\mathcal{C})$, is the minimum cardinality of a cover.

\begin{DE}[\cite{Abdi-kwise}] A clutter is \emph{tangled} if it has covering number two and every element belongs to a minimum cover.\end{DE}

Observe that if a clutter has covering number at least two, then it has a tangled minor, obtained by repeatedly deleting elements that keep the covering number at least two. 

Let us define an important class of tangled clutters. A clutter is a \emph{cuboid} if its ground set can be relabeled $[2n]:=\{1,\ldots,2n\}$ for some integer $n\geq 1$, such that $\{1,2\},\{3,4\},\ldots,\{2n-1,2n\}$ are transversals~\cite{Abdi-mnp,Abdi-cuboids}. In particular, every member has cardinality $n$. Note that every cuboid without a cover of cardinality one is tangled. Consider, for instance, the clutter $Q_6=\{\{2,4,6\},\{1,3,6\},\{1,4,5\},\{2,3,5\}\}$, whose elements and members correspond to the edges and triangles of the complete graph $K_4$, as labeled in \Cref{fig:delta-oddhole-K4}. Then $Q_6$ is a cuboid -- as $\{1,2\},\{3,4\},\{5,6\}$ are transversals -- without a cover of cardinality one. Moreover, it can be readily checked that $Q_6$ has no minor that is a delta or the blocker of an extended odd hole. Consequently, $Q_6$ is a clean tangled clutter.

More generally, clean tangled clutters have been the subject of recent study as they have been instrumental in the progress made towards various outstanding problems on ideal clutters, ranging from recognizing idealness~\cite{Abdi-int-rest}, the $\tau=2$ Conjecture~\cite{Cornuejols00} and new examples of ideal minimally non-packing clutters~\cite{Abdi-infinite}, idealness of $k$-wise intersecting families~\cite{Abdi-kwise}, to the existence of dyadic fractional packings in ideal clutters~\cite{Abdi-dyadic}.

Even though their definition is purely combinatorial, clean tangled clutters enjoy fascinating geometric properties, and in this paper we initiate the study of the geometric attributes of such clutters. We prove three results that manifest an interplay between the geometry and the combinatorics of such clutters. In particular, full-dimensional simplices, projective geometries over the two-element field, and an astonishing connection between them play a central role in this work.

\begin{figure}
\centering
\includegraphics[scale=0.25]{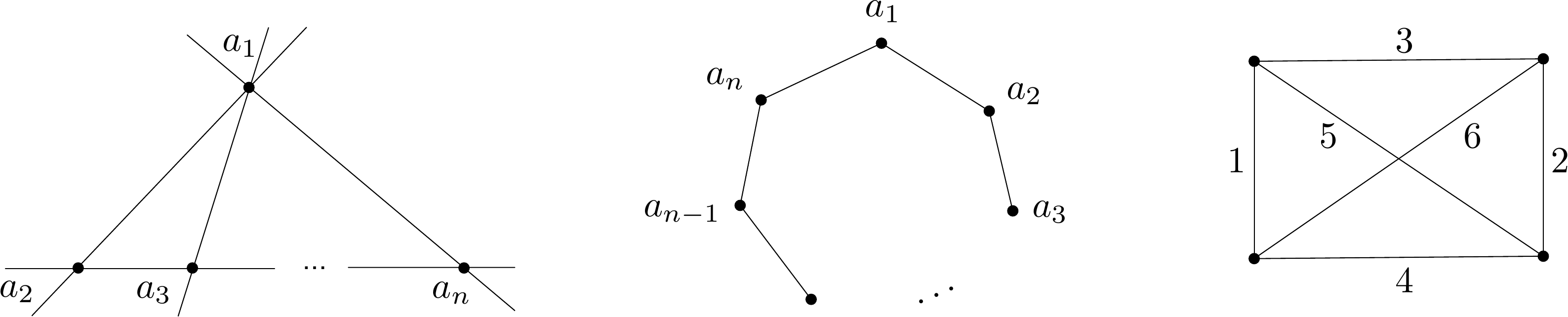}
\caption{{\bf Left:} The members of a \emph{delta} represented the lines of a degenerate projective plane. {\bf Middle:} The minimum cardinality members of an \emph{extended odd hole} represented as the edges of an odd hole. {\bf Right:} The members of $Q_6$ represented as the triangles of $K_4$.}
\label{fig:delta-oddhole-K4}
\end{figure}

\subsection{The core and the setcore of clean tangled clutters}

Let $\mathcal{C}$ be a clutter over ground set $V$. The \emph{incidence matrix of $\mathcal{C}$}, denoted $M(\mathcal{C})$, is the $0-1$ matrix whose columns are indexed by the elements and whose rows are indexed by the members. Consider the primal-dual pair of linear programs $$
(P)\quad\begin{array}{ll} \min \quad  &{\bf 1}^\top x\\ \text{s.t.} & M(\mathcal{C})x\geq {\bf 1}\\ & x\geq {\bf 0} \end{array}
\quad\quad\quad
(D)\quad
\begin{array}{ll} \max \quad  &{\bf 1}^\top y\\ \text{s.t.} & M(\mathcal{C})^\top y\leq {\bf 1}\\ & y\geq {\bf 0}.\end{array}
$$ The incidence vector of any cover of $\mathcal{C}$ gives a feasible solution for (P). Thus $\tau(\mathcal{C})$ is an upper bound on the optimal value of (P). A {\it fractional packing of $\mathcal{C}$} is any feasible solution $y$ for (D), and its {\it value} is ${\bf 1}^\top y$. Its {\it support}, denoted $\supp(y)$, is the clutter over ground set $V$ whose members are $\{C\in \mathcal{C}: y_C>0\}$. It follows from Weak LP Duality that every fractional packing has value at most $\tau(\mathcal{C})$. In general, this upper bound is far from being tight. However, what is fascinating about clean clutters is that,

\begin{theorem}[\cite{Abdi-hole}, Theorem 3 and \cite{Abdi-int-rest}, Lemma 1.6]\label{dense}
Every clean clutter with covering number at least two has a fractional packing of value two. In particular, every clean tangled clutter has a fractional packing of value two.
\end{theorem}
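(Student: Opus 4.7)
My plan is to prove the contrapositive by analyzing a minor-minimal counterexample. Suppose $\mathcal{C}$ is a clean clutter with $\tau(\mathcal{C})\geq 2$ that has no fractional packing of value two. By LP duality applied to the pair $(P)$--$(D)$, this is equivalent to $\tau^\star(\mathcal{C})<2$, so there is a fractional cover $x^\star\geq \mathbf 0$ with $M(\mathcal{C})x^\star\geq \mathbf 1$ and $\mathbf 1^\top x^\star<2$. The goal is to use $\mathcal{C}$ together with the witness $x^\star$ to extract a minor of $\mathcal{C}$ that is a delta or the blocker of an extended odd hole, contradicting cleanness.

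I would take $\mathcal{C}$ to be a minor-minimal counterexample, minimizing $|V|$ subject to $\tau\geq 2$ and $\tau^\star<2$. A first round of reductions should force $x^\star$ to be ``full-support and tight''. Indeed, if $x^\star_e=0$ for some $e$, then $\mathcal{C}\setminus e$ still admits the restricted $x^\star$ as a fractional cover of value $<2$; verifying that $\tau(\mathcal{C}\setminus e)\geq 2$ (or else handling the alternative directly) violates minimality. Analogously, contracting elements with a strict inequality and using complementary slackness should force every member of $\mathcal{C}$ to be tight, i.e.\ $\sum_{e\in C}x^\star_e=1$ for all $C\in \mathcal{C}$, so that the $(P)$-vertex $x^\star$ is entirely determined by the combinatorics of the tight member/element incidence pattern.

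With this rigidity in hand, the strategy is to split into two cases according to the minimum size of a member. If $\mathcal{C}$ has a member of size two, then I would seek three such small members whose combined incidence structure yields a degenerate projective plane on a suitable set of elements, producing a delta minor of $\mathcal{C}$ after appropriate contractions (the strict inequality $\mathbf 1^\top x^\star<2$ preventing the size-two members from being ``too spread out''). Otherwise every member has size at least three, and the gap $\mathbf 1^\top x^\star<2$ together with the equalities $\sum_{e\in C}x^\star_e=1$ should force a parity obstruction in the tight incidence hypergraph. From this I would aim to distill a cyclic family of minimal covers of odd length in $b(\mathcal{C})$, i.e.\ an extended odd hole in $b(\mathcal{C})$, which is exactly a blocker-of-extended-odd-hole minor of $\mathcal{C}$.

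The hard part is unquestionably this last extraction step, which converts the real-valued evidence ``$\tau^\star<2$'' into an honest combinatorial minor. Here I expect to need Lehman's theorem on minimally non-ideal clutters: under minor-minimality, $\mathcal{C}$ should behave essentially like an MNI clutter, and Lehman's dichotomy (degenerate projective planes versus the $J_n$/odd-hole family) matches the ``delta versus blocker of extended odd hole'' dichotomy in the definition of clean. The delicate point is that cleanness is weaker than idealness, so Lehman's theorem cannot be applied directly to $\mathcal{C}$; one must localize the MNI structure inside $\mathcal{C}$ carefully, most likely by restricting to the subsystem of elements and members active under $x^\star$ and arguing that this subsystem itself carries an MNI-like minor of the desired type.
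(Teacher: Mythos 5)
Your proposal is a plan rather than a proof, and the decisive step is the part left open. The reductions at the start are plausible bookkeeping, though even there you gloss over a real issue: deleting an element $e$ with $x^\star_e=0$ (or contracting a slack element) can drop the covering number to one, and ``handling the alternative directly'' is not an argument. The actual content of the theorem is the conversion of the analytic witness $\tau^\star(\mathcal{C})<2$ into a delta or blocker-of-extended-odd-hole \emph{minor}, and for that you offer only intentions (``I would seek,'' ``should force a parity obstruction,'' ``I would aim to distill''). Worse, the one concrete tool you name does not do what you want. It is true that $\tau(\mathcal{C})\geq 2$ together with $\tau^\star(\mathcal{C})<2$ certifies that $\mathcal{C}$ is non-ideal, hence has a minimally non-ideal minor; but Lehman's dichotomy is \emph{not} ``degenerate projective planes versus odd holes.'' Its second branch is the broad class of regular minimally non-ideal clutters with the $rs\geq n+1$ core structure, which contains many clean clutters -- $\mathbb{L}_7$ itself, for instance, is minimally non-ideal, binary, and therefore clean. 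So even after localizing a minimally non-ideal minor, Lehman's conclusion yields no contradiction with cleanness, and the dichotomy does not ``match'' the delta versus blocker-of-extended-odd-hole dichotomy as you assert.

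For comparison, the proof in the cited sources -- whose machinery this paper reproduces in \S\ref{sec:core} -- does not argue by LP duality on a minimal counterexample at all. One first passes to a tangled minor (deletion only, so a value-two packing lifts back), then proves that the graph $G(\mathcal{C})$ of minimum covers is bipartite; this is where the forbidden minors are genuinely extracted, from an odd cycle of cardinality-two covers, a much more tractable combinatorial configuration than a fractional optimal solution. The value-two packing is then built explicitly by induction on the connected components of $G(\mathcal{C})$, averaging packings of $\mathcal{C}\setminus U/U'$ and $\mathcal{C}/U\setminus U'$ exactly as in \Cref{bipartite} and \Cref{fp-recursive}. If you want to complete a proof, that constructive route is the one to follow; the route through Lehman's theorem founders on the mismatch described above.
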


We may therefore make the following definition:

\begin{DE}
Let $\mathcal{C}$ be a clean tangled clutter. Then the $\emph{core}$ of $\mathcal{C}$ is the clutter $$\core{\mathcal{C}} = \{C \in \mathcal{C} : y_{C} > 0 \text{ for some fractional packing $y$ of value two}\}.$$
\end{DE}

By \Cref{dense}, every clean tangled clutter has a nonempty core. Let us identify the core for two examples of clean tangled clutters. For the first example, consider the clean tangled clutter $Q_6$. As $\left(\frac12,\frac12,\frac12,\frac12\right)\in \mathbb{R}^{Q_6}_+$ is a fractional packing of value two, it follows that $\core{Q_6}=Q_6$. For the second example, consider the clutter $Q$ whose incidence matrix is 
\renewcommand{\kbldelim}{(}
\renewcommand{\kbrdelim}{)}
\[
  M(Q) = \kbordermatrix{
        &1&2&3&4&5&6&7&8\\
	\frac12&1&1&0&0&1&0&1&0\\
	\frac12&1&1&0&0&0&1&0&1\\
	\frac12&0&0&1&1&1&0&0&1\\
	\frac12&0&0&1&1&0&1&1&0\\
	0&1&0&1&1&1&0&1&0\\
	0&0&1&1&0&0&1&0&1\\
	0&0&1&1&0&1&0&0&1\\
	0&1&1&0&1&0&1&1&0
  }.
\] $Q$ is an \emph{ideal minimally non-packing} clutter with covering number two~\cite{Cornuejols00}, implying in turn that it is a clean tangled clutter~\cite{Abdi-infinite}. The row labels indicate the unique fractional packing of value two, where uniqueness is a simple consequence of Complementary Slackness. Subsequently, $\core{Q}$ consists of the four members of $Q$ corresponding to the top four rows of $M(Q)$. 


Let $\mathcal{C}$ be a clutter over ground set $V$. Distinct elements $u,v$ are \emph{duplicates in $\mathcal{C}$} if the corresponding columns in $M(\mathcal{C})$ are identical. To {\it duplicate an element $w$ of $\mathcal{C}$} is to introduce a new element $\bar{w}$ and replace $\mathcal{C}$ by the clutter over ground set $V\cup \{\bar{w}\}$ whose members are $\{C:w\notin C\in \mathcal{C}\}\cup \{C\cup \{\bar{w}\}:w\in C\in \mathcal{C}\}$. A {\it duplication of $\mathcal{C}$} is any clutter obtained from $\mathcal{C}$ after duplicating some elements.

Looking back at $\core{Q}$, we see that elements $1,2$ are duplicates and elements $3,4$ are duplicates, and that $\core{Q}$ is a duplication of $Q_6$. In fact, the core of any clean tangled clutter is a duplication of a cuboid -- let us elaborate.

Take an integer $n\geq 1$ and a set $S\subseteq \{0,1\}^n$. The \emph{cuboid of $S$}, denoted $\cuboid(S)$, is the clutter over ground set $[2n]$ whose members have incidence vectors $\{(p_1,1-p_1,\ldots,p_n,1-p_n):p\in S\}$. In particular, there is a bijection between the members of $\cuboid(S)$ and the points in $S$. Observe that every cuboid is obtained in this way. For example, $Q_6$ is the cuboid of the set $\{000,110,101,011\}\subseteq \{0,1\}^3$, represented in \Cref{fig:R11}.

\begin{figure}
\centering
\includegraphics[scale=0.4]{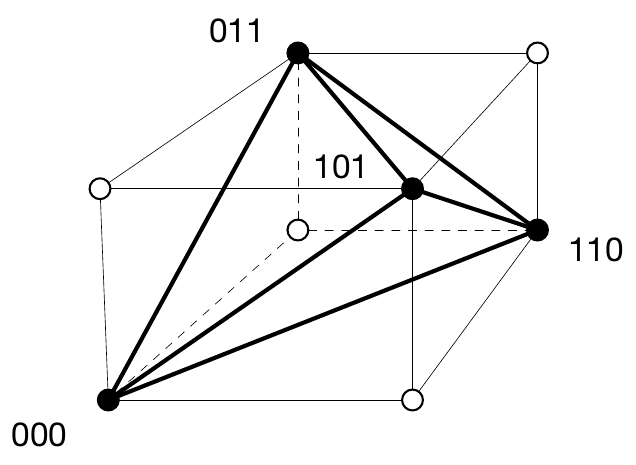}
\caption{A representation of $\setcore{Q_6}$ and its convex hull.}
\label{fig:R11}
\end{figure}

Take a point $q\in \{0,1\}^n$. To {\it twist $S$ by $q$} is to replace $S$ by $S\tr q:=\{p\tr q:p\in S\}$, where the second $\tr$ denotes coordinate-wise addition modulo $2$. Take a coordinate $i\in [n]$. Denote by $e_i$ the $i\textsuperscript{th}$ unit vector of appropriate dimension. To {\it twist coordinate $i$ of $S$} is to replace $S$ by $S\tr e_i$. Two sets $S_1,S_2$ are {\it isomorphic}, written as $S_1\cong S_2$, if one is obtained from the other after relabeling and twisting some coordinates. Two distinct coordinates $i,j\in [n]$ are {\it duplicates in $S$} if $S\subseteq \{x : x_i=x_j\}$ or $S\subseteq \{x : x_i + x_j=1\}$. Observe that if two coordinates are duplicates in a set, then they are duplicates in any isomorphic set. Observe further that $S$ has duplicated coordinates if, and only if, $\cuboid(S)$ has duplicated elements.

Let $\mathcal{C}$ be a clean tangled clutter over ground set $V$. Denote by $G(\mathcal{C})$ the graph over vertex set $V$ whose edges correspond to the minimum covers of $\mathcal{C}$. It can be readily checked that $G(\mathcal{C})$ is bipartite and every vertex of it is incident with an edge~\cite{Abdi-hole}. The {\it rank of $\mathcal{C}$}, denoted $\rank{\mathcal{C}}$, is the number of connected components of the bipartite graph $G(\mathcal{C})$. Our first result, below, justifies this choice of terminology. 

For example, the reader can verify that $G(Q_6),G(Q)$ are the bipartite graphs represented in \Cref{fig:Q6Q}, each of which has exactly three connected components, so $\rank{Q_6}=\rank{Q}=3$.

We are ready to state our first result:

\begin{theorem}[proved in \S\ref{sec:core}]\label{setcore}
Let $\mathcal{C}$ be a clean tangled clutter of rank $r$. Then there exists a set $S\subseteq \{0,1\}^r$ such that the following statements hold: 
\begin{enumerate}[(i)]
\item $\core{\mathcal{C}}$ is a duplication of $\cuboid(S)$, and up to isomorphism, $S$ is the unique set satisfying this property.
\item There is a one-to-one correspondence between the fractional packings of value two in $\mathcal{C}$ and the different ways to express $\frac12 \cdot{\bf 1}$ as a convex combination of the points in $S$.
\item $\conv(S)$ is a full-dimensional polytope containing $\frac12 \cdot{\bf 1}$ in its interior. 
\end{enumerate} 
\end{theorem}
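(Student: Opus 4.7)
The overall strategy handles the three parts in order, with LP complementary slackness being the workhorse for (i) and (ii), and with (iii) splitting into a straightforward relative-interior statement and a harder full-dimensionality statement.

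For (i), take any $C\in\core{\mathcal{C}}$ with $y_C>0$ for some fractional packing $y$ of value two, and any minimum cover $\{u,v\}$ of $\mathcal{C}$. Complementary slackness between the primal solution $\chi_{\{u,v\}}$ and the dual solution $y$ forces $|C\cap\{u,v\}|=1$. Hence $C$ meets every edge of $G(\mathcal{C})$ in exactly one endpoint and, since $G(\mathcal{C})$ is bipartite, $C$'s intersection with each component is exactly one of the two parts $A_i,B_i$. Encoding this choice as $p(C)\in\{0,1\}^r$ gives a set $S$; elements within a single part become duplicates in $\core{\mathcal{C}}$, and collapsing these duplicates yields $\cuboid(S)$. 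Uniqueness up to twist (swapping parts of a component) and relabel (reordering components) is then immediate, provided $S$ has no duplicated coordinates, which follows because otherwise $\cuboid(S)$ itself would have duplicated elements and $\core{\mathcal{C}}$ would be a duplication of a strictly smaller cuboid, contradicting that $r$ equals the exact number of components of $G(\mathcal{C})$.

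Parts (ii) and the relative-interior content of (iii) follow smoothly. Every fractional packing of value two has its support contained in $\core{\mathcal{C}}$ by definition, and via (i) it becomes a fractional packing of $\cuboid(S)$. The constraints $\sum_{p:p_i=1}y_{C_p}\leq 1$ and $\sum_{p:p_i=0}y_{C_p}\leq 1$ sum to $\sum_p y_{C_p}=2$, so both must be tight; setting $\lambda_p:=y_{C_p}/2$ yields $\sum\lambda_p=1$ and $\sum\lambda_p p=\tfrac12\mathbf{1}$, and the correspondence is clearly invertible. Thus $\tfrac12\mathbf{1}\in\conv(S)$ by \Cref{dense}. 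For the relative interior, invoke the definition of $\core{\mathcal{C}}$ to pick for each $p\in S$ a fractional packing $y^p$ of value two with $y^p_{C_p}>0$, average these over $p\in S$ to obtain a packing with full support, and read off a convex combination expressing $\tfrac12\mathbf{1}$ with strictly positive coefficient on every $p\in S$; this forces $\tfrac12\mathbf{1}$ to lie in no proper face, hence in the relative interior of $\conv(S)$.

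The main obstacle is full-dimensionality. Suppose toward a contradiction that $\conv(S)\subseteq\{a^\top x=b\}$ with $a\neq 0$. After twisting the coordinates $i$ with $a_i<0$ (which does not change the isomorphism class of $S$) and rescaling, assume $a\ge 0$, $\sum_i a_i=2$, and $b=1$, so that $\sum_i a_i p_i=1$ for every $p\in S$. Letting $A_i$ denote the part of component $i$ with $p_i=1$, define $c\in\mathbb{R}_+^V$ by $c_e:=a_i/|A_i|$ for $e\in A_i$ and $c_e:=0$ on the remaining $B_i$'s. Then $\mathbf{1}^\top c=2$ and $\sum_{e\in C}c_e=\sum_{i:p_i(C)=1}a_i=1$ for every $C\in\core{\mathcal{C}}$, so $c$ is an optimal fractional cover of $\core{\mathcal{C}}$. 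The plan is to promote $c$ to a fractional cover of $\mathcal{C}$ itself and then, comparing against a full-support dual-optimal packing from above, use complementary slackness and the support structure of the positive $a_i$'s to locate a pair $\{u,v\}$ with $u,v$ in distinct components of $G(\mathcal{C})$ that is a minimum cover of $\mathcal{C}$, contradicting rank $r$. The delicate step---and where I expect the argument to work hardest---is verifying (or reducing to the case that) every member of $\mathcal{C}\setminus\core{\mathcal{C}}$ is also hit with weight at least $1$ by $c$, as this is where cleanness (ruling out delta and blocker-of-extended-odd-hole minors) presumably enters, likely via a careful minor analysis that preserves cleanness and tangledness while driving us to a configuration with only two positive coordinates of $a$.
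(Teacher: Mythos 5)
There are two genuine gaps. The more serious one is full-dimensionality in (iii): you explicitly leave open the step of promoting the fractional cover $c$ from $\core{\mathcal{C}}$ to all of $\mathcal{C}$, and that is precisely where the real work would lie---nothing guarantees that members of $\mathcal{C}$ outside the core are hit with weight at least one by $c$, and the ``careful minor analysis'' you defer is not sketched in any checkable form. The paper avoids this duality route entirely: for each $i\in[r]$ it passes to the minor $\mathcal{C}\setminus U_i/V_i$, which is again a clean tangled clutter by \Cref{bipartite} and hence has a fractional packing of value two by \Cref{dense}; lifting that packing back to $\mathcal{C}$ (as in \Cref{fp-recursive}) yields a convex combination of points of $S$ equal to $\frac12\cdot{\bf 1}+\frac12 e_i$, and symmetrically $\frac12\cdot{\bf 1}-\frac12 e_i$. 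These $2r$ points of $\conv(S)$ immediately give full-dimensionality and place $\frac12\cdot{\bf 1}$ in the interior (making your separate relative-interior argument unnecessary). This recursive mechanism is what your proposal is missing: cleanness enters only through \Cref{dense} applied to minors, not through a direct analysis of $\mathcal{C}\setminus\core{\mathcal{C}}$.

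The second gap is the uniqueness claim in (i). You assert that $S$ has no duplicated coordinates because otherwise $\core{\mathcal{C}}$ would be a duplication of a strictly smaller cuboid, ``contradicting that $r$ equals the exact number of components of $G(\mathcal{C})$.'' There is no contradiction there: $G(\mathcal{C})$ is built from minimum covers of $\mathcal{C}$, and nothing a priori prevents two elements lying in different components of $G(\mathcal{C})$ from being duplicates in the (possibly much smaller) clutter $\core{\mathcal{C}}$. Ruling this out is exactly \Cref{setcore-unique}, whose proof again goes through minors: if $u,v$ were duplicates in $\core{\mathcal{C}}$ across components, then $\core{\mathcal{C}\setminus U/U'}\subseteq\core{\mathcal{C}}\setminus U/U'$ by \Cref{fp-recursive} would have a cover of cardinality one, contradicting \Cref{core-h2}. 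Your complementary-slackness derivation of the cuboid structure of the core and your proof of (ii) are correct and coincide with the paper's.
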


\begin{DE}\label{setcore-defn} Let $\mathcal{C}$ be a clean tangled clutter of rank $r$. The \emph{setcore of $\mathcal{C}$}, denoted $\setcore{\mathcal{C}}$, is the unique set $S\subseteq \{0,1\}^r$ such that $\core{\mathcal{C}}$ is a duplication of $\cuboid(S)$.
\end{DE}

An explicit description of the setcore is provided in \S\ref{sec:core}. 

For example, we see that $\setcore{Q_6} = \setcore{Q} = \{000,110,101,011\}$. Notice further that the convex hull of $\{000,110,101,011\}$ is a full-dimensional polytope containing the point $(\frac12,\frac12,\frac12)$ in its interior. (See \Cref{fig:R11}.)

\begin{figure}
\centering
\includegraphics[scale=0.5]{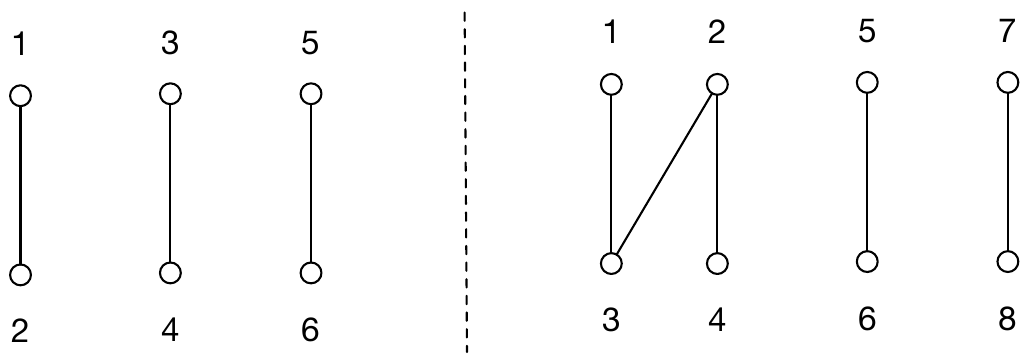}
\caption{{\bf Left:} A representation of $G(Q_6)$. {\bf Right:} A representation of $G(Q)$.}
\label{fig:Q6Q}
\end{figure}

\subsection{Simplices and projective geometries over the two-element field}

The convex hull of the setcore of any clean tangled clutter is a full-dimensional polytope by \Cref{setcore}~(iii). A natural geometric question arises: When is this polytope a simplex? Surprisingly, the answer to this basic question takes us to binary matroids. (Our terminology follows Oxley~\cite{Oxley11}.)

Let $k\geq 1$ be an integer, and let $A$ be the $k\times (2^k-1)$ matrix whose columns are $\left\{a\in \{0,1\}^k:a\neq {\bf 0}\right\}$. The binary matroid represented by $A$ is called the \emph{rank-$k$ projective geometry over $GF(2)$} and denoted $PG(k-1,2)$.\footnote{In the context of binary matroids, rank refers to $GF(2)$-rank, whereas in the context of clutters, rank refers to $\mathbb{R}$-rank.} Let $n:=2^k-1$. The \emph{cycle space of $PG(k-1,2)$} is $$\cycle{PG(k-1,2)}:=\big\{x\in \{0,1\}^n: Ax \equiv {\bf 0} \pmod{2}\big\}.$$ Observe that the cycle space forms a vector space over $GF(2)$. The \emph{cocycle space of $PG(k-1,2)$} is $$\cocycle{PG(k-1,2)} := \big\{ A^\top y ~~\text{mod $2$} : y\in \{0,1\}^k\big\}\subseteq \{0,1\}^n.$$ Observe that the cocycle space is the orthogonal complement of the cycle space. As the rows of $A$ are linearly independent over $GF(2)$, the cocycle space has $2^k=n+1$ points. In fact, we see in~\S\ref{sec:PG->simplex} that the $n+1$ points form the vertices of an $n$-dimensional simplex. Our second result, below, serves as a converse to this statement.

We refer to $PG(k-1,2),k\geq 1$ simply as \emph{projective geometries}. Let us look at the first three projective geometries. For the first one, $\cocycle{PG(0,1)} = \{0,1\}$. As for the second one, notice that $PG(1,2)$ is nothing but the graphic matroid of a triangle, and that $\cocycle{PG(1,2)}=\{000,110,101,011\}$. The third one, $PG(2,2)$, is known as the \emph{Fano matroid}. See \Cref{fig:PG-reps} for representations of these three matroids.

We are now ready to state our second result:

\begin{theorem}[$(\Leftarrow)$ proved in \S\ref{sec:PG->simplex}, $(\Rightarrow)$ proved in \S\ref{sec:simplex->PG}]\label{main-PG}
Let $\mathcal{C}$ be a clean tangled clutter. Then $\conv(\setcore{\mathcal{C}})$ is a simplex if, and only if, $\setcore{\mathcal{C}}$ is the cocycle space of a projective geometry.
\end{theorem}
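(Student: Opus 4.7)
For the $(\Leftarrow)$ direction, assume $\setcore{\mathcal{C}}=\cocycle{PG(k-1,2)}$ for some $k\geq 1$, and set $n:=2^k-1$. The cocycle space has $2^k=n+1$ elements sitting in $\{0,1\}^n$, and by \Cref{setcore}(iii) its convex hull is $n$-dimensional; so it suffices to verify that the $n+1$ points are affinely independent in $\mathbb{R}^n$. My plan is the standard Hadamard trick: apply the coordinate-wise affine bijection $x\mapsto 1-2x$, which preserves affine independence and sends $\{0,1\}^n$ onto $\{-1,+1\}^n$. The image of $A^\top y \bmod 2$ has $i$-th coordinate $(-1)^{a^{(i)}\cdot y}$, where $a^{(1)},\ldots,a^{(n)}$ are the nonzero vectors of $\{0,1\}^k$. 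Prepending a constant $1$ (corresponding to $a^{(0)}:=\mathbf{0}$) exhibits the $n+1$ image points as the rows of the $2^k\times 2^k$ character table of $(\mathbb{Z}/2)^k$, a Hadamard matrix with nonzero determinant; hence the original points are affinely independent and $\conv(\setcore{\mathcal{C}})$ is a simplex.

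For the $(\Rightarrow)$ direction, assume $\conv(\setcore{\mathcal{C}})$ is a simplex; write $r=\rank{\mathcal{C}}$ and $S=\setcore{\mathcal{C}}\subseteq\{0,1\}^r$. Since $\conv(S)$ is full-dimensional by \Cref{setcore}(iii), the simplex has $|S|=r+1$ vertices and every element of $S$ is one. Since $\frac12\cdot\mathbf{1}$ lies in its interior, its barycentric coordinates are unique and strictly positive, so by \Cref{setcore}(ii) the fractional packing of value two is unique, strictly positive on $\core{\mathcal{C}}$, and I denote it $y^\ast$.

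The target is to identify $S$, up to isomorphism, with $\cocycle{PG(k-1,2)}$ for some $k$. My plan has four steps: (a) twisting by any element of $S$, assume $\mathbf{0}\in S$; (b) show that $S$ is closed under coordinate-wise addition modulo $2$, so that $S$ is a $GF(2)$-linear subspace of dimension $k:=\log_2|S|$; (c) use that $S$ has no duplicated coordinates by \Cref{setcore}(i), and no constant coordinate by \Cref{setcore}(iii), to deduce that the $r$ columns of any $GF(2)$-generator matrix for $S$ are $r=2^k-1$ pairwise distinct nonzero vectors in $\{0,1\}^k$; (d) conclude these columns exhaust the $2^k-1$ nonzero vectors, identifying $S$ with $\cocycle{PG(k-1,2)}$.

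The main obstacle is step~(b). I would argue by contradiction: suppose $p,q\in S$ but $p\tr q\notin S$. From the uniqueness of $y^\ast$ and complementary slackness, every optimal fractional cover of $\mathcal{C}$ must satisfy $\sum_{v\in C}x_v=1$ on each $C\in\core{\mathcal{C}}$, which strongly rigidifies the LP. The plan is to exploit this rigidity together with the cleanness of $\mathcal{C}$ to produce a forbidden minor. Concretely, I would attempt to locate a minor of $\mathcal{C}$ obtained via contractions and deletions guided by the cuboid coordinates on which $p$ and $q$ agree and disagree, and show that the absence of $p\tr q$ from $S$ forces this minor to be a delta or the blocker of an extended odd hole, contradicting cleanness. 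Carrying out this minor construction from the pair $(p,q)$ is, I expect, the bulk of the technical work.
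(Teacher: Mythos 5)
Your $(\Leftarrow)$ direction is correct, and it is a genuinely different (and arguably slicker) argument than the paper's. The paper proves \Cref{PG-simplex} by writing down all $n+1$ facets explicitly: by \Cref{PG}~(i) every nonempty cocycle has cardinality $2^{k-1}$, so $\sum_i x_i\leq \frac{n+1}{2}$ is valid and tight at every point except ${\bf 0}$, and the transitivity of the binary space (\Cref{transitive}) transports this to a facet for each vertex. Your Hadamard/character-table argument establishes affine independence of the $2^k$ points directly and needs neither the facet description nor \Cref{setcore}~(iii); what you lose is the explicit facet list, which the paper does not need for this theorem either. Steps (a), (c), (d) of your $(\Rightarrow)$ plan are also sound: given that $S$ is a binary space, the simplex condition forces $|S|=r+1=2^k$, and the absence of constant and duplicated coordinates (the latter via the uniqueness in \Cref{setcore}~(i), really \Cref{setcore-unique}) forces the $r=2^k-1$ columns of a generator matrix to be exactly the distinct nonzero vectors of $\{0,1\}^k$.

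The genuine gap is step (b), which you yourself flag as "the bulk of the technical work" and do not carry out; it is in fact the entire mathematical content of the $(\Rightarrow)$ direction. Your proposed route --- take a violating pair $p,q\in S$ with $p\tr q\notin S$ and extract a delta or the blocker of an extended odd hole as a minor of $\mathcal{C}$ --- is speculative, and there is no evident way to execute it: the members of $\mathcal{C}$ outside $\core{\mathcal{C}}$ are completely unconstrained by $S$, so a minor guided only by the cuboid coordinates of $p$ and $q$ gives you no control over which forbidden configuration (if any) appears. The paper never argues this way. Instead it inducts on the ground set: \Cref{fp-recursive-2} shows each minor $\mathcal{C}\setminus U_i/V_i$ is again clean, tangled, and has a unique fractional packing of value two, so by induction its core is a duplication of the cuboid of a smaller projective geometry's cocycle space; the key \Cref{unique-lemma} then reassembles the incidence matrix of $S$ from these $2r$ projections, using a counting argument on column sums and the recursive block structure of $\cocycle{PG(k-1,2)}$ (\Cref{PG-recursive}) to pin down $|S|=2^k$, $r=2^k-1$, and only then closure of the \emph{columns} under mod-$2$ addition (Claim~5 of that lemma), from which $S=\cocycle{PG(k-1,2)}$ follows. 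Until you supply an argument of comparable substance for step (b), the $(\Rightarrow)$ direction is not proved.
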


For instance, as can be seen in \Cref{fig:R11}, the convex hull of $\setcore{Q_6}$ is a simplex, so according to \Cref{main-PG}, $\setcore{Q_6}$ is the cocycle space of a projective geometry. This is indeed the case as $\setcore{Q_6} =\{000,110,101,011\}= \cocycle{PG(1,2)}$.

\begin{figure}
\centering
$$
\begin{pmatrix}
1
\end{pmatrix}\qquad
\begin{pmatrix}
1&0&1\\
0&1&1
\end{pmatrix}\qquad
\begin{pmatrix}
1&0&0&0&1&1&1\\
0&1&0&1&0&1&1\\
0&0&1&1&1&0&1
\end{pmatrix}
$$
\caption{Representations of $PG(0,2),PG(1,2)$ and $PG(2,2)$, from left to right.}
\label{fig:PG-reps}
\end{figure}

\subsection{The clutter of the lines of the Fano plane}

Consider the clutter over ground set $\{1,\ldots,7\}$ whose members are $$\mathbb{L}_7 :=
\big\{
\{1,2,3\}, \{1,4,5\}, \{1,6,7\}, \{2,5,6\}, \{2,4,7\}, \{3,4,6\},\{3,5,7\}
\big\}.
$$ Note that the members of $\mathbb{L}_7$ correspond to the lines of the \emph{Fano plane}, as displayed in \Cref{fig:Fano}. The members of $\mathbb{L}_7$ may also be viewed as the lines (i.e. triangles) of the Fano matroid. Observe further that $\mathbb{L}_7$ is an identically self-blocking clutter. 

As the only \emph{minimally non-ideal binary clutter} with a member of cardinality three~\cite{Abdi-triangle}, $\mathbb{L}_7$ plays a crucial role in Seymour's \emph{Flowing Conjecture}, predicting an excluded minor characterization of \emph{ideal binary clutters}~\cite{Seymour77}. 

Our third result relates to finding $\mathbb{L}_7$ as a minor in clean tangled clutters:

\begin{theorem}[proved in \S\ref{sec:fano}]\label{main-L7}
Let $\mathcal{C}$ be a clean tangled clutter where $\conv(\setcore{\mathcal{C}})$ is a simplex. If $\rank{\mathcal{C}}>3$, then $\mathcal{C}$ has an $\mathbb{L}_7$ minor.
\end{theorem}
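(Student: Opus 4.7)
The plan is to apply \Cref{main-PG}, which yields $\setcore{\mathcal{C}} = \cocycle{PG(k-1,2)}$ for some integer $k \geq 1$; the hypothesis $\rank{\mathcal{C}} = 2^k - 1 > 3$ forces $k \geq 3$. I would induct on $k$, with the case $k = 3$ doing the real combinatorial work and larger $k$ reducing to it inductively.

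\textbf{Base case $k = 3$.} The central observation is that $\mathbb{L}_7$ is already a minor of the 14-element clutter $\cuboid(\cocycle{PG(2,2)})$, extracted by contracting the seven ``odd'' elements $\{1,3,5,7,9,11,13\}$. The set $\cocycle{PG(2,2)} \subseteq \{0,1\}^7$ consists of the zero vector together with the seven weight-$4$ cocircuits of the Fano matroid, each the complement of a Fano line. For a weight-$4$ cocircuit $p$, the cuboid member $C_p$ loses its four odd elements under contraction and retains $\{2i : i \in L_p\}$, where $L_p$ is the Fano line dual to $p$; meanwhile the zero-vector member becomes the full even-indexed set and is strictly contained in each line-residue, hence is discarded by inclusion-minimality. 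Relabeling $2i \leftrightarrow i$ recovers $\mathbb{L}_7$ exactly. To lift this to $\mathcal{C}$ itself, I would use \Cref{setcore} to view $\core{\mathcal{C}}$ as a duplication of $\cuboid(\cocycle{PG(2,2)})$ in which each bipartite component $H_i = (U_i, W_i)$ of $G(\mathcal{C})$ corresponds to the pair $\{2i-1, 2i\}$. Contracting $U := U_1 \cup \cdots \cup U_7$ in $\mathcal{C}$ sends each core-member for cocircuit $p$ to $\bigcup_{i \in L_p} W_i$, and further deletion of duplicate elements within each $W_i$ (down to a single representative) produces the seven Fano lines on seven elements.

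\textbf{Inductive step $k \geq 4$.} Reduce to $k-1$ by passing to a minor $\mathcal{C}'$ of $\mathcal{C}$ whose setcore is $\cocycle{PG(k-2,2)}$. Projectively, intersecting $\cocycle{PG(k-1,2)}$ with a coordinate hyperplane $\{p : p_{i^*} = 0\}$ yields $\cocycle{PG(k-2,2)}$ after merging duplicated coordinates, which is a standard hyperplane-section argument in binary projective geometry. At the clutter level this amounts to picking a minimum cover $\{u,w\}$ lying in the component $H_{i^*}$ and forming $\mathcal{C} \setminus u / w$, then deleting the newly duplicated elements on the remaining components. The resulting $\mathcal{C}'$ has rank $2^{k-1} - 1 \geq 7 > 3$, so the inductive hypothesis applies and produces an $\mathbb{L}_7$ minor of $\mathcal{C}'$, hence of $\mathcal{C}$.

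\textbf{Main obstacle.} The most delicate point is the lift from the cuboid to $\mathcal{C}$: members of $\mathcal{C}$ that lie outside $\core{\mathcal{C}}$ may leave spurious inclusion-minimal residues under the contraction of $U$, polluting the putative $\mathbb{L}_7$ minor. Overcoming this should require a structural lemma, presumably exploiting the cleanness hypothesis, showing that every non-core member becomes a superset of some core minor-member after the contraction and is hence discarded by inclusion-minimality. A secondary technical verification, needed to make the induction go through, is that the minor $\mathcal{C}'$ produced in the inductive step remains clean and tangled with the simplex property of its setcore preserved -- ideally established by tracking the effect of the deletion-contraction on $G(\mathcal{C})$ and on the fractional packings of value two guaranteed by \Cref{dense}.
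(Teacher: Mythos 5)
Your overall architecture (reduce to $\setcore{\mathcal{C}}=\cocycle{PG(k-1,2)}$ via \Cref{main-PG}, induct on $k$ with base case $k=3$, and in the inductive step pass to $\mathcal{C}\setminus U/U'$ for a connected component of $G(\mathcal{C})$) matches the paper, and the inductive step is essentially sound: \Cref{fp-recursive-2} and \Cref{PG->PG} supply exactly the ``secondary technical verification'' you flag. The problem is the base case, where you have correctly located the central difficulty and then left it unresolved. The lift from $\cuboid(\cocycle{PG(2,2)})$ to $\mathcal{C}$ is not a technicality to be patched by a plausible-sounding structural lemma --- it is the entire content of the theorem, and the paper explicitly describes your plan (contract $U_1\cup\cdots\cup U_7$ and hope the non-core members behave) as the ``naive, unsuccessful attempt'' in its introduction. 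The specific lemma you posit --- that every non-core member's residue under the contraction contains some core member's residue --- is not something cleanness hands you directly: a non-core member $C$ can meet some $U_i\cup V_i$ in a proper nonempty subset of $V_i$, so $C-U$ could be strictly contained in, or incomparable with, the sets $\bigcup_{j\in L}V_j$, either of which destroys the seven-member structure. Nothing in your proposal rules this out, so the base case is a genuine gap.

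For contrast, the paper never tries to control the non-core members directly; it works through the blocker. Using the Resolution Principle it first proves (\Cref{mono}) that a monochromatic cover meeting only three components contains a minimal cover of cardinality three with one element per component. A counting argument over ``special covers'' (\Cref{main-L7-PR}, Claims 1--3) then produces, inside a single member of $\supp(y)$, such covers $B_L$ for six of the seven Fano lines, which feeds the hypotheses of \Cref{L7-minor}. That lemma upgrades six lines to all seven, passes to a contraction-minimal counterexample to force each $G[U_i\cup V_i]$ to be a single edge, and only then contracts $U_1\cup\cdots\cup U_7$; the resulting minor is identified as $\mathbb{L}_7$ by showing its blocker is $\mathbb{L}_7$ (every cover of cardinality at most three is some $B_L$, and a cover of cardinality four would miss a member guaranteed by hypothesis (b)). If you want to complete your proof, you need some substitute for this blocker-side analysis; the direct member-side lemma you propose is unlikely to be provable before the contraction-minimality reductions have been carried out.
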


Let us outline a naive approach for proving this theorem. Though unsuccessful, this attempt explains the intuition behind \Cref{main-L7}. Let $\mathcal{C}$ be a clean tangled clutter where $\conv(\setcore{\mathcal{C}})$ is a simplex, and $\rank{\mathcal{C}}>3$. By \Cref{main-PG}, $\setcore{\mathcal{C}} = \cocycle{PG(k-1,2)}$ for some $k\geq 1$. As $2^k-1=\rank{\mathcal{C}}>3$ and $\rank{\mathcal{C}}=2^k-1$, we must have that $k\geq 3$. From here, the reader can verify that since $PG(k-1,2)$ has the Fano matroid as a minor, $\cuboid(\cocycle{PG(k-1,2)})$ has an $\mathbb{L}_7$ minor. Consequently, $\core{\mathcal{C}}$, which is a duplication of $\cuboid(\setcore{\mathcal{C}})$ by \Cref{setcore}, must have an $\mathbb{L}_7$ minor. However, this does \emph{not} necessarily imply that $\mathcal{C}$ has an $\mathbb{L}_7$ minor, because $\core{\mathcal{C}}$ is only a subset of $\mathcal{C}$, so minors of $\core{\mathcal{C}}$ do not necessarily correspond to minors of $\mathcal{C}$. In \S\ref{sec:fano}, we see an elaborate, successful attempt for proving \Cref{main-L7}. 

\begin{figure}
\centering
\includegraphics[scale=0.4]{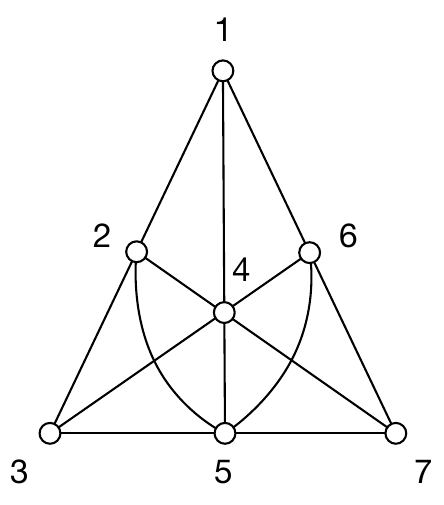}
\caption{The Fano plane}
\label{fig:Fano}
\end{figure}

\subsection{Outline of the paper}

In \S\ref{sec:core}, we prove \Cref{setcore} and provide applications of the theorem used in later sections.
In \S\ref{sec:PG->simplex}, after a primer on binary matroids, we show how every projective geometry leads to a simplex, and prove \Cref{main-PG} $(\Leftarrow)$ as a consequence.
In \S\ref{sec:simplex->PG}, we prove \Cref{main-PG} $(\Rightarrow)$, and derive an appealing consequence on characterizing simplices that come from projective geometries.
In \S\ref{sec:fano}, after laying some ground work, we prove \Cref{main-L7}, and then discuss an application to idealness.
Finally, in \S\ref{sec:conclusion}, we discuss future directions for research, and conclude with two conjectures.

\section{The core and the setcore of clean tangled clutters}\label{sec:core}

In this section, after presenting some lemmas, we prove \Cref{setcore}, and then provide three applications for clean tangled clutters: the first is a characterization of the core, the second is an explicit description of the setcore when the rank is small, and the third is an equivalent condition for having a simplicial setcore.

Given a clean tangled clutter $\mathcal{C}$ over ground set $V$, recall that $G(\mathcal{C})$ denotes the graph over vertex set $V$ whose edges correspond to the minimum covers of $\mathcal{C}$. We need the following theorem:

\begin{theorem}[\cite{Abdi-dyadic}]\label{bipartite}
Let $\mathcal{C}$ be a clean tangled clutter. Then $G(\mathcal{C})$ is a bipartite graph where every vertex is incident with an edge. Moreover, if $G(\mathcal{C})$ is not connected and $\{U,U'\}$ is the bipartition of a connected component, then $\mathcal{C}\setminus U/U'$ is a clean tangled clutter.
\end{theorem}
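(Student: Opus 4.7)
The plan has two parts, corresponding to the two claims of the theorem.

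First, to show $G(\mathcal{C})$ is bipartite with no isolated vertices: the absence of isolated vertices is immediate, since tangled-ness says every element appears in a minimum cover, and minimum covers correspond to edges of $G(\mathcal{C})$. For bipartiteness I would argue the contrapositive. Suppose $G(\mathcal{C})$ has an odd cycle; then it has a shortest one, which is necessarily chordless, that is, an odd hole on some vertex set $\{a_1,\dots,a_{2k+1}\}$. Set $J:=V\setminus\{a_1,\dots,a_{2k+1}\}$ and consider the minor $\mathcal{D}:=b(\mathcal{C})\setminus J$ of the blocker. Since the minimum members of $b(\mathcal{C})$ are exactly the edges of $G(\mathcal{C})$, the $2$-element members of $\mathcal{D}$ are precisely the edges of $G(\mathcal{C})$ both of whose endpoints lie in $\{a_1,\dots,a_{2k+1}\}$, and chordlessness of the hole ensures these are the $2k+1$ hole edges and nothing more. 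If $k=1$, then $\mathcal{D}$ has exactly the three hole edges as members (the only $3$-element superset is non-minimal), so $\mathcal{D}$ is a delta on three elements. If $k\geq 2$, then the minimum members of $\mathcal{D}$ form an odd hole of length at least $5$, so $\mathcal{D}$ is an extended odd hole (any larger members being allowed by the definition). By blocker-minor duality, $\mathcal{C}/J = b(\mathcal{D})$ is either a delta (deltas are identically self-blocking) or the blocker of an extended odd hole, contradicting clean-ness of $\mathcal{C}$.

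For the moreover assertion, clean-ness of $\mathcal{C}\setminus U/U'$ is immediate from the minor-closure of the class of clean clutters, so the task is to verify tangled-ness. For the upper bound $\tau(\mathcal{C}\setminus U/U')\leq 2$: because $G(\mathcal{C})$ is disconnected and $U\cup U'$ is a single connected component, the ground set $V-U-U'$ of the minor is nonempty. Any element $v$ in it belongs, by tangled-ness of $\mathcal{C}$, to some minimum cover $\{v,w\}$; since $\{v,w\}$ is an edge of $G(\mathcal{C})$, the endpoint $w$ lies in the same component as $v$ and hence $w\in V-U-U'$. Thus $\{v,w\}$ persists as a $2$-cover of $\mathcal{C}\setminus U/U'$, which simultaneously gives $\tau\leq 2$ and, once the matching lower bound is in place, shows that every element of the minor belongs to a minimum cover.

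The delicate step, and the main obstacle, is proving $\tau(\mathcal{C}\setminus U/U')\geq 2$. If $\{v\}$ were a $1$-cover of the minor, then $U\cup\{v\}$ would be a cover of $\mathcal{C}$ containing some minimal subcover $B\in b(\mathcal{C})$ with $|B|\geq 2$. The case $|B|=2$ is excluded by the first part: a $2$-cover within $U$ would be an edge inside one side of the bipartition of a connected component, violating bipartiteness, while a $2$-cover $\{u,v\}$ with $u\in U$ would be an edge of $G(\mathcal{C})$ joining two distinct components, which is absurd. The remaining possibility, a minimal cover $B\subseteq U\cup\{v\}$ of size at least $3$, is not ruled out by the bipartite structure alone. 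I would try to exclude it either by exploiting the value-two fractional packing $y$ of $\mathcal{C}$ guaranteed by \Cref{dense} --- which by complementary slackness satisfies $\sum_{C\ni w}y_C=1$ for every $w\in V$ and forces $\sum_{C\supseteq\{a,b\}}y_C=0$ for every edge $\{a,b\}$ of $G(\mathcal{C})$ --- to derive a numerical contradiction from the hypothetical large minimal cover, or by using $B$ together with the bipartite edges of $G(\mathcal{C})$ incident to $U$ to construct an explicit delta or blocker-of-extended-odd-hole minor of $\mathcal{C}$. Extracting the right combinatorial structure from the minimality of $B$ and the adjacencies inside the component $U\cup U'$ is the crux of the argument.
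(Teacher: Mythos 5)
The paper states \Cref{bipartite} as an imported result from~\cite{Abdi-dyadic} and supplies no proof of its own, so your argument has to stand on its own. Most of it does: the first assertion is proved correctly and completely. Tangledness gives that every vertex meets an edge, and your shortest-odd-cycle argument is sound --- a shortest odd cycle is chordless, the deletion minor $\mathcal{D}=b(\mathcal{C})\setminus J$ has as its minimum-cardinality members exactly the $2k+1$ hole edges (no singletons since $\tau(\mathcal{C})=2$, no chords by minimality), so $\mathcal{D}$ is a delta when $k=1$ and an extended odd hole when $k\geq 2$, and $\mathcal{C}/J=b(\mathcal{D})$ is then a forbidden minor. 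Cleanness of $\mathcal{C}\setminus U/U'$, the bound $\tau(\mathcal{C}\setminus U/U')\leq 2$, and the deduction that every element of the minor lies in a minimum cover once the lower bound is in hand, are all fine.

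The genuine gap is exactly where you flag it: you never rule out a minimal cover $B\subseteq U\cup\{v\}$ with $|B|\geq 3$, and you only name two candidate strategies without carrying either out. Worse, the first strategy as described will not close the gap on its own. Complementary slackness applied to a value-two packing $y$ from \Cref{dense} gives that every $C$ with $y_C>0$ satisfies $C\cap(U\cup U')\in\{U,U'\}$ and that $\sum\left(y_C:w\in C\right)=1$ for every element $w$. From the hypothesis that every member disjoint from $U$ contains $v$, one deduces only that the members of $\supp(y)$ containing $U'$ are precisely those containing the side of $v$'s component that contains $v$; that is, the two connected components become duplicated inside $\supp(y)$. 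This is a perfectly consistent configuration, so no numerical contradiction falls out. (It does dispose of the subcase $B\subseteq U$: since $\sum\left(y_C:C\supseteq U'\right)=1>0$, some member avoids $U$ entirely, so $U$ is not a cover. But the subcase $v\in B$, $|B|\geq 3$ survives.) What is actually needed --- and what constitutes the substance of the cited result --- is a combinatorial construction of a delta or the blocker of an extended odd hole from such a $B$ together with the edges of $G(\mathcal{C})$ meeting it; your proposal acknowledges this as ``the crux'' but does not supply it, so the ``moreover'' part of the theorem remains unproved.
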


\subsection{Some lemmas}\label{sec:core/lemmas}

Let $\mathcal{C}$ be a clutter over ground set $V$. Consider the primal-dual pair of linear programs $$
(P)\quad\begin{array}{ll} \min \quad  &{\bf 1}^\top x\\ \text{s.t.} & \sum_{v\in C} x_v\geq 1 \quad C\in \mathcal{C}\\ & x\geq {\bf 0} \end{array}
\quad\quad\quad
(D)\quad
\begin{array}{ll} \max \quad  &{\bf 1}^\top y\\ \text{s.t.} & \sum\left(y_C:v\in C\in \mathcal{C}\right)\leq 1 \quad v\in V\\ & y\geq {\bf 0}.\end{array}
$$ By applying Complementary Slackness to this pair, we get the following:

\begin{RE}\label{CS-conditions} 
Let $\mathcal{C}$ be a clutter, $B$ a minimum cover, and $y$ a fractional packing of value $\tau(\mathcal{C})$, if there is any. Then $|C\cap B| = 1$ for every $C\in \mathcal{C}$ such that $y_C>0$, and $\sum\left(y_C:v\in C\in \mathcal{C}\right) = 1$ for every element $v\in B$.\end{RE}

\paragraph{An explicit description of the setcore.} Let $\mathcal{C}$ be a clean tangled clutter. Recall that $$\core{\mathcal{C}}=\{C\in \mathcal{C}:y_C>0 \text{ for some fractional packing $y$ of value two}\}.$$ The following is an immediate consequence of \Cref{CS-conditions}:

\begin{RE}\label{CS-conditions-2} 
Let $\mathcal{C}$ be a clean tangled clutter over ground set $V$. Then every member of $\core{\mathcal{C}}$ is a transversal of the minimum covers of $\mathcal{C}$. Moreover, for every fractional packing $y$ of value two and for every element $v\in V$, $\sum\left(y_C:v\in C\in \mathcal{C}\right) = 1$.
\end{RE}

Let $G:=G(\mathcal{C})$ and $r:=\rank{\mathcal{C}}$. By \Cref{bipartite}, $G$ is a bipartite graph where every vertex is incident with an edge, and it has $r$ connected components by definition. For each $i\in [r]$, denote by $\{U_i,V_i\}$ the bipartition of the $i\textsuperscript{th}$ connected component of $G$. As an immediate consequence of \Cref{CS-conditions-2},

\begin{RE}\label{core->cuboid}
Let $\mathcal{C}$ be a clean tangled clutter of rank $r$, and for each $i\in [r]$, denote by $\{U_i,V_i\}$ the bipartition of the $i\textsuperscript{th}$ connected component of $G(\mathcal{C})$. Let $C$ be a member of $\mathcal{C}$. If $C\in \core{\mathcal{C}}$, then $C\cap (U_i\cup V_i)\in \{U_i,V_i\}$ for each $i\in [r]$. \footnote{By the end of this section, we shall see that the converse of this remark also holds.}
\end{RE}

In particular, $\core{\mathcal{C}}$ is a duplication of a cuboid -- let us elaborate. Consider the set $S\subseteq \{0,1\}^r$ defined as follows: start with $S=\emptyset$, and for each $C\in \core{\mathcal{C}}$, add a point $p$ to $S$ such that $$p_i = 0 \quad\Leftrightarrow\quad C\cap (U_i\cup V_i) = U_i \quad \forall i\in [r].$$ By \Cref{core->cuboid}, the set $S$ is well-defined and $\core{\mathcal{C}}$ is a duplication of $\cuboid(S)$. Thus this must be the unique set foreseen by \Cref{setcore}~(i), and called the setcore of $\mathcal{C}$ by \Cref{setcore-defn}. We call $S$ the {\it setcore of $\mathcal{C}$ with respect to $(U_1,V_1;U_2,V_2;\ldots;U_r,V_r)$}, and denote it $\setcore{\mathcal{C}:U_1,V_1;U_2,V_2;\ldots;U_r,V_r}$. (Note that we have not yet proved uniqueness, though we will see a proof shortly.) 

\paragraph{Fractional packings vs. convex combinations.} The following remark, which is an immediate consequence of \Cref{CS-conditions}, sheds light on how the hypercube center point $\frac12\cdot {\bf 1}$ comes into play in \Cref{setcore}:

\begin{RE}\label{fp-cc}
Take an integer $r\geq 1$, a set $S\subseteq \{0,1\}^r$ and let $\mathcal{C}:=\cuboid(S)$. Let $y\in \mathbb{R}^{\mathcal{C}}_+$ and define $\alpha\in \mathbb{R}^{S}_+$ as follows: for every point $p\in S$ and corresponding member $C\in \mathcal{C}$, let $\alpha_p := \frac12\cdot y_C$. Then $y$ is a fractional packing of $\mathcal{C}$ of value two if, and only if, ${\bf 1}^\top \alpha = 1$ and $\sum_{p\in S} \alpha_p \cdot p = \frac12 \cdot {\bf 1}$. In particular, $\mathcal{C}$ has a fractional packing of value two if, and only if, $\frac12\cdot {\bf 1}\in \conv(S)$.
\end{RE}

\paragraph{Recursive construction of fractional packings.} Let $\mathcal{C}$ be a clean tangled clutter where $G(\mathcal{C})$ is not connected, and let $\{U,U'\}$ be the bipartition of a connected component of $G(\mathcal{C})$. Let $\mathcal{C}':=\mathcal{C}\setminus U/U'$. Observe that every member of $\mathcal{C}$ disjoint from $U$ contains $U'$, implying in turn that $\mathcal{C}'=\{C-U':C\in \mathcal{C},C\cap U=\emptyset\}$. Observe further that $\mathcal{C}'$ is a clean tangled clutter by \Cref{bipartite}, so it has a fractional packing of value two by \Cref{dense}. These observations are used to set up the following lemma:

\begin{LE}\label{fp-recursive}
Let $\mathcal{C}$ be a clean tangled clutter, where $G(\mathcal{C})$ is not connected. Let $\{U,U'\}$ be the bipartition of a connected component of $G(\mathcal{C})$, and let $z,z'$ be fractional packings of $\mathcal{C}\setminus U/U',\mathcal{C}/ U\setminus U'$ of value two, respectively. Let $y,y'\in \mathbb{R}^{\mathcal{C}}_+$ be defined as follows: $$
y_C:=\left\{
\begin{array}{ll}
z_{C-U'} ~\quad\text{ if } C\cap U=\emptyset\\
0 ~\quad\quad\quad\text{ otherwise}
\end{array}\right.
\quad\text{and}\quad
y'_C:=\left\{
\begin{array}{ll}
z'_{C-U} ~\quad\text{ if } C\cap U'=\emptyset\\
0 ~\quad\quad\quad\text{ otherwise.}
\end{array}\right.
$$ 
Then $\frac12 y + \frac12 y'$ is a fractional packing of $\mathcal{C}$ of value two. In particular, $\core{\mathcal{C}\setminus U/U'}\subseteq \core{\mathcal{C}}\setminus U/U'$.
\end{LE}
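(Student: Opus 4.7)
The plan is to unpack the definitions of $y$ and $y'$ directly, verify the value and feasibility of $\tfrac12 y+\tfrac12 y'$, and then derive the core containment from a lifting argument. The structural observation powering everything is the one already recorded just before the lemma: every member $C\in\mathcal{C}$ with $C\cap U=\emptyset$ must contain $U'$, because $C$ intersects every minimum cover in the component (the edges of $G(\mathcal{C})$ between $U$ and $U'$) and every vertex of $U'$ lies on such an edge by \Cref{bipartite}. Consequently, the map $C\mapsto C-U'$ is a bijection from $\{C\in\mathcal{C}:C\cap U=\emptyset\}$ onto $\mathcal{C}':=\mathcal{C}\setminus U/U'$, and symmetrically $C\mapsto C-U$ is a bijection from $\{C\in\mathcal{C}:C\cap U'=\emptyset\}$ onto $\mathcal{C}'':=\mathcal{C}/U\setminus U'$. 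Under the first bijection, $\mathbf{1}^\top y=\mathbf{1}^\top z=2$, and analogously $\mathbf{1}^\top y'=2$, so $\tfrac12 y+\tfrac12 y'$ already has value $2$.

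Feasibility reduces to checking $\sum(\tfrac12 y_C+\tfrac12 y'_C:v\in C\in\mathcal{C})\le 1$ for every $v\in V$, which I would verify by three cases. If $v\in U$, every $C$ with $y_C>0$ misses $U$ and hence $v$, so $\sum_{v\in C}y_C=0$; every $C$ with $y'_C>0$ misses $U'$ and therefore contains $U$ (by the symmetric observation), so each contains $v$ and $\sum_{v\in C}y'_C=\mathbf{1}^\top y'=2$, giving a total of $1$. The case $v\in U'$ is symmetric. If $v\notin U\cup U'$, then the bijection $C\mapsto C-U'$ preserves the condition $v\in C$, so $\sum_{v\in C}y_C=\sum(z_D:v\in D\in\mathcal{C}')\le 1$ by feasibility of $z$, and analogously $\sum_{v\in C}y'_C\le 1$, yielding a total of at most $1$.

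For the ``in particular'' clause, take any $D\in\core{\mathcal{C}'}$ and choose a fractional packing $z$ of $\mathcal{C}'$ of value two with $z_D>0$; since $\mathcal{C}''$ is also a clean tangled clutter by \Cref{bipartite}, it admits some fractional packing $z'$ of value two by \Cref{dense}. Applying the first half of the lemma, the member $C:=D\cup U'$ of $\mathcal{C}$ satisfies $y_C=z_D>0$, whence $C\in\core{\mathcal{C}}$; since $D$ is already a member of $\mathcal{C}'=\mathcal{C}\setminus U/U'$, it is inclusion-wise minimal in $\{C'-U':C'\in\mathcal{C},\,C'\cap U=\emptyset\}$, and hence also in the subset coming from $\core{\mathcal{C}}$, so $D$ is a member of $\core{\mathcal{C}}\setminus U/U'$. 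I expect the only real friction to come from bookkeeping the three feasibility cases; once one recognizes that each element of $U\cup U'$ is saturated entirely by exactly one of the two lifts while every element outside $U\cup U'$ inherits its constraint from $z$ or $z'$, the argument essentially writes itself.
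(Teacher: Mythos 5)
Your proof is correct and complete; the paper explicitly leaves this lemma as an exercise, and your argument (the bijections $C\mapsto C-U'$ and $C\mapsto C-U$ onto the two minors, the three-case feasibility check, and the lifting of a packing witnessing $z_D>0$) is exactly the intended verification.
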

\begin{proof} 
We leave this as an exercise for the reader.
\end{proof}

\paragraph{Duplicated elements of the core.} For the next lemma, we need the following remark:

\begin{RE}\label{core-h2}
The core of any clean tangled clutter has covering number two.
\end{RE}

\begin{LE}\label{setcore-unique}
Let $\mathcal{C}$ be a clean tangled clutter over ground set $V$. Then two elements $u,v\in V$ are duplicates in $\core{\mathcal{C}}$ if, and only if, $u,v$ belong to the same part of the bipartition of a connected component of $G(\mathcal{C})$.
\end{LE}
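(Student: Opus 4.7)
The plan is to handle the two directions separately: the easy direction falls straight out of \Cref{core->cuboid}, while the converse will require a case analysis whose hard case uses \Cref{fp-recursive}.

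For the ``if'' direction I would argue directly. If $u,v$ both lie in, say, $U_i$, then \Cref{core->cuboid} forces every $C\in\core{\mathcal{C}}$ either to contain all of $U_i$ or to be disjoint from $U_i$, so $u\in C \iff v\in C$, which is precisely the duplication condition.

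For the ``only if'' direction I would argue by contrapositive, splitting into two cases. The first, where $u\in U_i$ and $v\in V_i$ share a component, is essentially free: \Cref{core->cuboid} already forces every core member to contain \emph{exactly one} of $u,v$, while \Cref{CS-conditions-2} applied to any value-two fractional packing (one exists by \Cref{dense}) produces at least one $C\in\core{\mathcal{C}}$ containing $u$; such a $C$ misses $v$ and so witnesses non-duplication.

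The second case, $u\in U_i$ and $v\in U_j$ with $i\ne j$, is the main obstacle, because the ``side'' a core member takes on component~$j$ is a priori uncorrelated with its side on component~$i$, and we must construct a single core member on which the two choices are controlled simultaneously. The plan is to force the $U_i$-side on component~$i$ (so that $u$ is in the member) and then freely realize the $V_j$-side on component~$j$ (so that $v$ is not), using \Cref{fp-recursive} to propagate a packing from a minor back up to $\mathcal{C}$. Concretely, by \Cref{bipartite} the minor $\mathcal{C}_1:=\mathcal{C}\setminus V_i/U_i$ is a clean tangled clutter, so \Cref{dense} supplies a value-two fractional packing $z$ of $\mathcal{C}_1$, and \Cref{fp-recursive} lifts $z$ to a value-two fractional packing of $\mathcal{C}$ supported (in part) on the members of the form $C'\cup U_i$ for $C'\in\supp(z)$, each of which then lies in $\core{\mathcal{C}}$. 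Every such lift automatically contains $u$, because any $\mathcal{C}$-member disjoint from $V_i$ must contain all of $U_i$ (each vertex of $U_i$ sits on a min-cover edge into $V_i$). It remains to choose $C'\in\supp(z)$ missing $v$: by \Cref{CS-conditions-2} applied to $\mathcal{C}_1$ with $z$, the total $z$-weight of $\mathcal{C}_1$-members through $v$ equals~$1$, while ${\bf 1}^\top z = 2$, so strictly positive weight sits on $v$-free members of $\mathcal{C}_1$. The corresponding lift $C':=C'\cup U_i$ then lies in $\core{\mathcal{C}}$, contains $u$, and misses $v$, completing the contrapositive.
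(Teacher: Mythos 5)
Your proposal is correct and follows essentially the same route as the paper's proof: the easy direction is read off \Cref{core->cuboid}, and the different-components case passes to the component minor $\mathcal{C}\setminus V_i/U_i$ via \Cref{bipartite} and \Cref{fp-recursive}. The only difference is the finishing move — you exhibit a separating core member directly via complementary slackness in the minor, whereas the paper derives that $\core{\mathcal{C}\setminus V_i/U_i}$ would have a singleton cover and contradicts \Cref{core-h2} — but these are two sides of the same fact.
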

\begin{proof}
$(\Leftarrow)$ follows \Cref{core->cuboid}. $(\Rightarrow)$ By \Cref{core->cuboid}, it suffices to show that $u,v$ belong to the same connected component of $G:=G(\mathcal{C})$. Suppose otherwise. In particular, $G$ is not connected. Let $\{U,U'\}$ be the bipartition of the connected component containing $u$ where $u\in U'$. Then $\mathcal{C}\setminus U/U'$ is a clean tangled clutter by \Cref{bipartite}. Let $w$ be a neighbour of $u$ in $G$; so $w\in U$. Then $\{w,u\}$ is a cover of $\mathcal{C}$. As every member of $\core{\mathcal{C}}$ containing $u$ also contains $v$, it follows that $\{w,v\}$ is a cover of $\core{\mathcal{C}}$, implying in turn that $\core{\mathcal{C}}\setminus U/U'$ has $\{v\}$ as a cover. However, $\core{\mathcal{C}\setminus U/U'}\subseteq \core{\mathcal{C}}\setminus U/U'$ by \Cref{fp-recursive}, so $\core{\mathcal{C}\setminus U/U'}$ has a cover of cardinality one, a contradiction to \Cref{core-h2}.
\end{proof}

\subsection{Proof of \Cref{setcore}}\label{sec:core/setcore}

Let $\mathcal{C}$ be a clean tangled clutter over ground set $V$, let $G:=G(\mathcal{C})$, and let $r:=\rank{\mathcal{C}}$. Recall that $r$ is the number of connected components of $G$. For each $i\in [r]$, let $\{U_i,V_i\}$ be the bipartition of the $i\textsuperscript{th}$ connected component of~$G$. Let $S:=\setcore{\mathcal{C}:U_1,V_1;U_2,V_2;\ldots;U_r,V_r}$. We claim that $S$ satisfies (i)-(iii) of \Cref{setcore}, thereby finishing the proof. It follows from the construction of $S$ that $\core{\mathcal{C}}$ is a duplication of $\cuboid(S)$. Moreover, by \Cref{setcore-unique}, $S$ is the unique set satisfying this property, up to isomorphism. Thus, {\bf (i)} holds. {\bf (ii)} Every fractional packing of value two in $\mathcal{C}$ corresponds to a fractional packing of value two in $\core{\mathcal{C}}$, and every fractional packing of value two in $\core{\mathcal{C}}$ corresponds to a fractional packing of value two in $\cuboid(S)$. By \Cref{fp-cc}, the fractional packings of value two in $\cuboid(S)$ are in correspondence with the different ways to express $\frac12 \cdot {\bf 1}$ as a convex combination of the points in $S$. These observations prove (ii). 

\setcounter{claim_nb}{0}
 
\begin{claim} 
$\frac12 \cdot {\bf 1}\in \conv(S)$.
\end{claim}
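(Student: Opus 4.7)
The plan is to deduce the claim by chaining together \Cref{dense} (existence of a fractional packing of value two in $\mathcal{C}$), the definition of $\core{\mathcal{C}}$, the fact that $\core{\mathcal{C}}$ is a duplication of $\cuboid(S)$ (already established above), and \Cref{fp-cc} (the translation between fractional packings of a cuboid and convex combinations of $\frac12\cdot{\bf 1}$).

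First, I would apply \Cref{dense} to obtain a fractional packing $y$ of $\mathcal{C}$ of value two. By the very definition of the core, $\supp(y)\subseteq \core{\mathcal{C}}$, so setting $y_C=0$ for $C\notin \core{\mathcal{C}}$ changes nothing. The element-wise inequalities $\sum(y_C:v\in C\in\mathcal{C})\le 1$ immediately restrict to $\sum(y_C:v\in C\in\core{\mathcal{C}})\le 1$, so $y$ is also a fractional packing of $\core{\mathcal{C}}$ of value two.

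Next, I would transfer this fractional packing to $\cuboid(S)$. Because $\core{\mathcal{C}}$ is a duplication of $\cuboid(S)$, there is a natural bijection between members of $\core{\mathcal{C}}$ and members of $\cuboid(S)$: each duplicated element in $\core{\mathcal{C}}$ appears in exactly the same collection of members as the element it duplicates, so the element-wise packing constraint is preserved when we project to $\cuboid(S)$ via this bijection. In this way $y$ gives a fractional packing $z$ of $\cuboid(S)$ of value two. Applying \Cref{fp-cc}, this is exactly the statement that $\frac12\cdot{\bf 1}\in \conv(S)$, as desired.

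There is no real obstacle; the only point that requires a moment's care is the observation that duplication does not affect the fractional packing polytope (duplicates share the same row of $M(\cdot)$, so their constraints coincide), and I would state this explicitly as the step bridging $\core{\mathcal{C}}$ and $\cuboid(S)$.
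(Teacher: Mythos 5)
Your proposal is correct and follows exactly the same route as the paper: invoke \Cref{dense} to get a fractional packing of $\mathcal{C}$ of value two, observe it restricts to one of $\core{\mathcal{C}}$ and hence of $\cuboid(S)$ (duplication being harmless), and conclude via \Cref{fp-cc}. Your explicit remark about why duplication preserves fractional packings is a detail the paper leaves implicit, but the argument is the same.
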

\begin{cproof}
By \Cref{dense}, $\mathcal{C}$ and therefore $\core{\mathcal{C}}$ has a fractional packing of value two, implying that $\cuboid(S)$ has a fractional packing of value two. It therefore follows from \Cref{fp-cc} that $\frac{1}{2} \cdot \mathbf{1}$ can be expressed as a convex combination of the points in $S$, thereby proving the claim.
\end{cproof}

\begin{claim} 
$\frac12 \cdot {\bf 1} \pm \frac12 \cdot e_i \in \conv(S)$ for each $i\in [r]$.
\end{claim}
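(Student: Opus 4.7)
The case $r=1$ is immediate: the previous claim forces $S=\{0,1\}$, so $\frac12\cdot{\bf 1}\pm\frac12\cdot e_1\in S\subseteq\conv(S)$. Hence I assume $r\geq 2$, in which case $G(\mathcal{C})$ is disconnected. By the symmetry of the argument below under swapping $U_i$ and $V_i$, it suffices to prove $\frac12\cdot{\bf 1}+\frac12\cdot e_i\in\conv(S)$; equivalently, by \Cref{fp-cc}, to produce a convex combination of $S$ supported on $S^+_i:=\{p\in S:p_i=1\}$ that equals $\frac12\cdot{\bf 1}+\frac12\cdot e_i$.

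The idea is to extract such a combination from a fractional packing of the minor $\mathcal{C}':=\mathcal{C}\setminus U_i/V_i$. By \Cref{bipartite}, $\mathcal{C}'$ is a clean tangled clutter, so by \Cref{dense} it admits a fractional packing $z$ of value two. A key preliminary observation is that every member of $\mathcal{C}$ disjoint from $U_i$ must contain all of $V_i$: for each $v\in V_i$, pick a neighbor $u\in U_i$ in $G(\mathcal{C})$; then $\{u,v\}$ is a cover of $\mathcal{C}$, so any $U_i$-avoiding member contains $v$. Hence every $C'\in\mathcal{C}'$ lifts uniquely to $C:=C'\cup V_i\in\mathcal{C}$. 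Next, \Cref{fp-recursive} yields $\core{\mathcal{C}'}\subseteq\core{\mathcal{C}}\setminus U_i/V_i$, so each lift of a $C'\in\supp(z)$ lies in $\core{\mathcal{C}}$ and corresponds to a point $p\in S$ with $p_i=1$. Setting $\alpha_p:=\frac12 z_{C'}$ on these $p$ and $\alpha_p:=0$ elsewhere produces a distribution on $S$ of total mass $\tfrac12\cdot 2=1$, supported on $S^+_i$.

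It remains to check that $\sum_p\alpha_p\,p=\frac12\cdot{\bf 1}+\frac12\cdot e_i$. Coordinate $i$ equals $\sum_p\alpha_p=1$ automatically. For $j\neq i$, \Cref{core->cuboid} applied to $C\in\core{\mathcal{C}}$ forces $C\cap(U_j\cup V_j)\in\{U_j,V_j\}$; since $V_i$ is disjoint from $U_j\cup V_j$, the same holds for $C'=C-V_i$. Therefore $p_j=1$ iff $v\in C'$ for any fixed $v\in V_j$, so coordinate $j$ equals $\tfrac12\sum_{C'\in\mathcal{C}',\,v\in C'}z_{C'}=\tfrac12$ by \Cref{CS-conditions-2} applied to the tangled clutter $\mathcal{C}'$. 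The main obstacle is this final coordinate alignment: the support of $z$ a priori respects only the structure of $G(\mathcal{C}')$ and need not behave compatibly with the other components of $G(\mathcal{C})$; it is the combination of \Cref{fp-recursive} with \Cref{core->cuboid} that bridges the two graphs and lets the computation go through.
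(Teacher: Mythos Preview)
Your proof is correct and follows essentially the same approach as the paper's own argument: pass to the clean tangled minor $\mathcal{C}'=\mathcal{C}\setminus U_i/V_i$, take a value-two fractional packing $z$ there, lift it back to $\core{\mathcal{C}}$ via \Cref{fp-recursive}, and read off the convex combination coordinate by coordinate using \Cref{CS-conditions-2}. The only cosmetic differences are that the paper introduces an intermediate lifted vector $y\in\mathbb{R}^{\mathcal{C}}_+$ before passing to $\alpha$, and handles the $-\frac12 e_i$ case by repeating the argument on $\mathcal{C}/U_i\setminus V_i$ rather than invoking the $U_i\leftrightarrow V_i$ symmetry as you do.
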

\begin{cproof}
When $r = 1$, note that Claim~1 implies that $S = \{ 0, 1 \}$, so Claim~2 holds. Now assume $r\geq 2$.
Let $\mathcal{C}':=\mathcal{C}\setminus U_i/V_i$. Then $\mathcal{C}'$ is a clean tangled clutter by \Cref{bipartite}, and $\core{\mathcal{C}'}\subseteq \core{\mathcal{C}}\setminus U_i/V_i$ by \Cref{fp-recursive}. Let $z$ be a fractional packing of $\mathcal{C}'$ of value two. Then by \Cref{CS-conditions-2}, $$
\sum \left(z_{C'} : v\in C'\in \mathcal{C}'\right) = 1 \quad \forall~v\in V-(U_i\cup V_i).
$$ Define $y\in \mathbb{R}^{\mathcal{C}}_+$ as follows: $$
y_C:=\left\{
\begin{array}{ll}
z_{C-V_i} ~\quad\text{ if } C\cap U_i=\emptyset\\
0 ~\quad\quad\quad\text{ otherwise}.
\end{array}\right.
$$ Notice that \begin{align*}
{\bf 1}^\top y &= 2\\
\sum \left(y_{C} : v\in C\in \mathcal{C}\right) &= 1 \quad \forall~v\in V-(U_i\cup V_i)\\
\sum \left(y_{C} : v\in C\in \mathcal{C}\right) &= 2 \quad \forall~v\in V_i\\
\sum \left(y_{C} : v\in C\in \mathcal{C}\right) &= 0 \quad \forall~v\in U_i.
\end{align*} As $\supp(z)\subseteq \core{\mathcal{C}'}\subseteq \core{\mathcal{C}}\setminus U_i/V_i$, it follows that $\supp(y)\subseteq \core{\mathcal{C}}$. Define $\alpha\in \mathbb{R}^{S}_+$ as follows: for every point $p\in S$ and corresponding member $C\in \core{\mathcal{C}}$, let $\alpha_p := \frac12\cdot y_C$. Then the equalities above show that ${\bf 1}^\top \alpha = 1$ and $\sum_{p\in S} \alpha_p \cdot p = \frac12 \cdot {\bf 1} + \frac12\cdot e_i$. In particular, $\frac12 \cdot {\bf 1} + \frac12 \cdot e_i \in \conv(S)$. Repeating the argument on $\mathcal{C}/ U_i\setminus V_i$ yields $\frac12 \cdot {\bf 1} - \frac12 \cdot e_i \in \conv(S)$, thereby proving the claim.
\end{cproof}

Claims~1 and ~2 together imply that $\conv(S)$ is a full-dimensional polytope containing $\frac12\cdot {\bf 1}$ in its interior, so {\bf (iii)} holds. This finishes the proof of \Cref{setcore}.
\qed

\subsection{Applications}\label{sec:setcore/apps}

As the first application of \Cref{setcore}, we give the following characterization of the core of a clean tangled clutter. Note that this result is the converse of \Cref{core->cuboid}.

\begin{theorem}\label{core->cuboid-converse}
Let $\mathcal{C}$ be a clean tangled clutter of rank $r$, and for each $i\in [r]$, denote by $\{U_i,V_i\}$ the bipartition of the $i\textsuperscript{th}$ connected component of $G(\mathcal{C})$. Then $$\core{\mathcal{C}} = \{C\in \mathcal{C} : C\cap (U_i\cup V_i) = U_i \text{ or } V_i \quad \forall~i\in [r]\}.$$
\end{theorem}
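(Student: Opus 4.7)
The inclusion ``$\subseteq$'' is exactly \Cref{core->cuboid}, so I focus on ``$\supseteq$''. Given $C\in \mathcal{C}$ satisfying $C\cap (U_i\cup V_i)\in \{U_i,V_i\}$ for every $i\in [r]$, my goal is to produce a fractional packing of value two with positive weight on $C$.

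My plan is to appeal to strict complementary slackness for the primal-dual packing LP at the start of \S\ref{sec:core/lemmas}, whose common optimum value is $2$ by \Cref{dense}. Under strict CS, the condition $C\in \core{\mathcal{C}}$ is equivalent to the requirement that $\sum_{v\in C} x^*_v=1$ for every fractional optimum cover $x^*$ of $\mathcal{C}$. So the task reduces to pinning down every such $x^*$.

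To do this, I would exploit the CS tightness already available on $\core{\mathcal{C}}$. For each $C'\in \core{\mathcal{C}}$ there exists an optimum packing with positive weight on $C'$, so pairing this packing with $x^*$ forces $\sum_{v\in C'} x^*_v=1$. Write $u_i:=x^*(U_i)$ and $v_i:=x^*(V_i)$. Since $C'$ has cuboid structure (\Cref{core->cuboid}) with associated setcore point $p\in S$, this tightness rewrites as the affine equation
\[
\sum_i u_i + (v-u)^\top p = 1.
\]
Demanding this for every $p\in S$ and using that $\conv(S)$ is full-dimensional (\Cref{setcore}~(iii)), hence $S$ affinely spans $\mathbb{R}^r$, forces $u=v$ and $\sum_i u_i=1$.

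With this balance in hand, applying the same decomposition to $C$ (using its assumed cuboid structure) yields $\sum_{v\in C} x^*_v=\sum_i u_i=1$, which is exactly what strict CS requires. I do not anticipate a real obstacle here: the single crucial observation is that CS tightness on core members, combined with the affine-spanning property of $S$, rigidly constrains every optimum cover to satisfy $x^*(U_i)=x^*(V_i)$; once that balance is established, the cuboid condition on $C$ delivers the desired tightness for free.
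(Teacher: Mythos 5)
Your argument is correct, and it takes a genuinely different route from the paper's. The paper stays entirely on the dual (packing) side: writing $\mathcal{C}'$ for the right-hand side and $S\subseteq S'$ for the associated point sets, it uses the fact that $\frac12\cdot{\bf 1}$ lies in the interior of $\conv(S)\subseteq\conv(S')$ to write $\frac12\cdot{\bf 1}$ as a convex combination of $S'$ with positive weight on any prescribed point, and then converts this via \Cref{fp-cc} into a fractional packing of $\mathcal{C}$ of value two supported on the prescribed member. You instead work on the primal side: strict complementarity correctly reduces membership in $\core{\mathcal{C}}$ to tightness of $C$ under every optimal fractional cover $x^*$, and your affine-spanning step, which rests on the same input (\Cref{setcore}~(iii)), yields the stronger structural fact that every optimal fractional cover satisfies $x^*(U_i)=x^*(V_i)$ for all $i$ and $\sum_i x^*(U_i)=1$, from which tightness of any member with the cuboid trace is immediate. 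The two details worth making explicit when writing this up are (1) that the common LP value is indeed $2$ (the dual optimum is at least $2$ by \Cref{dense} and the primal optimum is at most $\tau(\mathcal{C})=2$), so that Goldman--Tucker strict complementarity applies to this pair, and (2) that $V=\bigcup_{i\in[r]}(U_i\cup V_i)$ (every vertex of $G(\mathcal{C})$ meets an edge, by \Cref{bipartite}), so the decomposition $x^*(C)=\sum_i x^*(C\cap(U_i\cup V_i))$ accounts for all of $x^*$ restricted to $C$. The paper's proof is more elementary in that it avoids strict complementarity altogether; yours buys, as a byproduct, a clean description of the optimal face of the fractional covering polyhedron.
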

\begin{proof}
Denote by $\mathcal{C}'$ the clutter on the right-hand side. Let $S:=\setcore{\mathcal{C}:U_1,V_1;\ldots;U_r,V_r}$. Let $S'$ be the subset of $\{0,1\}^r$ defined as follows: start with $S'=\emptyset$, and for each $C\in \mathcal{C}'$, add a point $p$ to $S'$ such that $$p_i=0 \quad\Leftrightarrow\quad C\cap (U_i\cup V_i) = U_i \quad \forall~i\in [r].$$ By \Cref{core->cuboid}, $$\core{\mathcal{C}} \subseteq \mathcal{C}'$$ so $S\subseteq S'$. We know from \Cref{setcore}~(iii) that $\frac12 \cdot {\bf 1}$ lies in the interior of $\conv(S)$, so $\frac12 \cdot {\bf 1}$ lies in the interior of $\conv(S')$. As a result, for every $p\in S'$, $\frac12\cdot {\bf 1}$ can be written as a convex combination of the points in $S'$ such that the coefficient of $p$ is nonzero. That is, by \Cref{fp-cc}, for each $C\in \mathcal{C}'$, there is a fractional packing of $\mathcal{C}'$ whose support includes $C$. As every fractional packing of $\mathcal{C}'$ is also a fractional packing of $\mathcal{C}$, it follows that $$\mathcal{C}'\subseteq \core{\mathcal{C}}$$ thereby finishing the proof of \Cref{core->cuboid-converse}.
\end{proof}

For the next application, we give an explicit description of the setcore when the rank is small:

\begin{theorem}\label{small-rank}
Let $\mathcal{C}$ be a clean tangled clutter with rank $r$. For each $i\in [r]$, denote by $\{U_i,V_i\}$ the bipartition of the $i\textsuperscript{th}$ connected component of $G(\mathcal{C})$. Then the following statements hold: \begin{enumerate}[(i)]
\item If $r=1$, then $\setcore{\mathcal{C}}=\{0,1\}$, and so $\core{\mathcal{C}}=\{U_1,V_1\}$.
\item If $r=2$, then $\setcore{\mathcal{C}}=\{00,10,01,11\}$, and so $\core{\mathcal{C}}=\{U_1\cup U_2,U_1\cup V_2,V_1\cup U_2,V_1\cup V_2\}$.
\item If $r=3$ and $\mathcal{C}$ does not have disjoint members, then 
$$\setcore{\mathcal{C}} = \{000,110,101,011\} \text{ or } \{100,010,001,111\}$$ and so \begin{align*}
\core{\mathcal{C}} = &\{U_1\cup U_2\cup U_3,U_1\cup V_2\cup V_3,V_1\cup U_2\cup V_3,V_1\cup V_2\cup U_3\}\\
 &\text{ or }\{U_1\cup U_2\cup V_3,U_1\cup V_2\cup U_3,V_1\cup U_2\cup U_3,V_1\cup V_2\cup V_3\}
 \end{align*}
\end{enumerate}
\end{theorem}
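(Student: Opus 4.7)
The plan is to deduce each case from \Cref{setcore}. Write $S := \setcore{\mathcal{C}:U_1,V_1;\ldots;U_r,V_r}$; by part~(iii) of that theorem, $\conv(S)$ is a full-dimensional polytope in $\mathbb{R}^r$ containing $\tfrac12\mathbf{1}$ in its interior, and $\core{\mathcal{C}}$ is a duplication of $\cuboid(S)$. Once $S$ is pinned down, the explicit description of $\core{\mathcal{C}}$ follows from the construction of the setcore: each point $p \in S$ corresponds to the member $\bigcup_{i:p_i=0} U_i \cup \bigcup_{i:p_i=1} V_i$ of $\core{\mathcal{C}}$.

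For~(i), $\conv(S)\subseteq [0,1]$ must have $\tfrac12$ in its relative interior, which forces both $0,1\in S$, and so $S=\{0,1\}$. For~(ii), $\conv(S)\subseteq [0,1]^2$ must contain $(\tfrac12,\tfrac12)$ in its interior; however, each three-element subset of $\{0,1\}^2$ forms a right triangle whose hypotenuse is one of the two diagonals of the unit square, and $(\tfrac12,\tfrac12)$ lies on both diagonals. Hence $|S|=4$, giving $S=\{00,10,01,11\}$.

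For~(iii), the hypothesis that $\mathcal{C}$ has no disjoint members passes to $\core{\mathcal{C}}$, and through the $\cuboid$ correspondence translates to the statement that $S$ has no pair of complementary points: two members of $\cuboid(S)$ are disjoint precisely when their points $p,q\in S$ satisfy $q=\mathbf{1}-p$. Since the $8$ points of $\{0,1\}^3$ partition into $4$ complementary pairs, $|S|\le 4$; combined with full-dimensionality of $\conv(S)$ in $\mathbb{R}^3$, this forces $|S|=4$ with $S$ affinely independent and selecting exactly one point from each of the $4$ complementary pairs.

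The main obstacle is showing that among the $2^4=16$ such selections, only the two parity classes $\{000,110,101,011\}$ and $\{100,010,001,111\}$ have $\tfrac12\mathbf{1}$ in the interior of their convex hull. I would parameterize the selection by $\epsilon\in\{0,1\}^4$, write each $s^i(\epsilon)$ explicitly, and analyze the barycentric system $\sum\lambda_i=1$, $\sum\lambda_i s^i=\tfrac12\mathbf{1}$. Taking pairwise differences of the three coordinate equations reduces the system to $(2\epsilon_2-1)\lambda_2=(2\epsilon_3-1)\lambda_3=(2\epsilon_4-1)\lambda_4$; since $\tfrac12\mathbf{1}$ is in the interior we need $\lambda_2,\lambda_3,\lambda_4>0$, so the three signs $2\epsilon_i-1$ must agree, forcing $\epsilon_2=\epsilon_3=\epsilon_4$. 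Substituting back and using $\lambda_1>0$ pins down $\epsilon_1=1-\epsilon_2$, leaving exactly the two solutions $\epsilon=(0,1,1,1)$ and $(1,0,0,0)$, which correspond to the two parity classes. Translating each through the $U_i/V_i$ correspondence yields the two possibilities for $\core{\mathcal{C}}$ stated in the theorem.
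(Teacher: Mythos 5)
Your proposal is correct and follows the same route as the paper: both arguments rest on Theorem~\ref{setcore}(iii) (full-dimensionality and $\tfrac12\cdot\mathbf{1}$ in the interior of $\conv(S)$) together with, for part (iii), the observation that the no-disjoint-members hypothesis forbids antipodal points in $S$. The paper leaves the case analysis for (iii) implicit ("these two facts imply (iii)"), whereas you carry it out explicitly via the barycentric system; your computation checks out.
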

\begin{proof}
Let $S:=\setcore{\mathcal{C}}\subseteq \{0,1\}^r$. By \Cref{setcore}~(iii), $\frac12\cdot {\bf 1}$ lies in the interior of $\conv(S)$. This immediately {\bf (i)} and {\bf (ii)}. If $\mathcal{C}$ does not have disjoint members, then neither does $\core{\mathcal{C}}$, implying that $S$ does not have antipodal points. These two facts imply {\bf (iii)}.
\end{proof}

Finally, \Cref{setcore} allows us to restate the assumption that the setcore of a clean tangled clutter has a simplicial convex hull. This restatement is crucial for the proof of \Cref{main-PG}.

\begin{theorem}\label{unique-simplex}
Let $\mathcal{C}$ be a clean tangled clutter. Then $\conv(\setcore{\mathcal{C}})$ is a simplex if, and only if, $\mathcal{C}$ has a unique fractional packing of value two.
\end{theorem}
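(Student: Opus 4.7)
The plan is to combine parts (ii) and (iii) of \Cref{setcore} with a standard fact from convex geometry. Let $r:=\rank{\mathcal{C}}$ and $S:=\setcore{\mathcal{C}}\subseteq \{0,1\}^r$. By \Cref{setcore}~(ii), the fractional packings of value two in $\mathcal{C}$ are in bijection with the ways to write $\tfrac{1}{2}\cdot {\bf 1}$ as a convex combination of the points of $S$. Hence the uniqueness of the fractional packing is equivalent to the uniqueness of this convex combination, and the task reduces to showing that $\conv(S)$ is a simplex if, and only if, $\tfrac{1}{2}\cdot{\bf 1}$ admits a unique such representation.

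Next I would exploit the crucial fact that $S\subseteq \{0,1\}^r$: since the $0-1$ points are the vertices of the unit hypercube, they are extreme in every polytope in $\mathbb{R}^r$ that contains them. Consequently, every point of $S$ is a vertex of $\conv(S)$, so the vertex set of $\conv(S)$ is exactly $S$. Combined with \Cref{setcore}~(iii), which guarantees that $\conv(S)$ is a full-dimensional polytope (hence has at least $r+1$ vertices), this gives: $\conv(S)$ is a simplex if, and only if, $|S|=r+1$, if, and only if, the points of $S$ are affinely independent.

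For the forward direction, suppose $\conv(S)$ is a simplex. Then $S$ is affinely independent, and so any point of $\mathbb{R}^r$ that can be written as an affine combination of $S$ has unique affine coefficients; in particular the convex representation of $\tfrac{1}{2}\cdot{\bf 1}$ is unique. For the reverse direction, I would prove the contrapositive. Suppose $\conv(S)$ is not a simplex. Then $S$ is affinely dependent, so there exist coefficients $\mu\in \mathbb{R}^S$, not all zero, with $\sum_{p\in S}\mu_p=0$ and $\sum_{p\in S}\mu_p\cdot p={\bf 0}$. By \Cref{setcore}~(iii), $\tfrac{1}{2}\cdot{\bf 1}$ lies in the \emph{interior} of $\conv(S)$, so it admits some convex representation $\tfrac{1}{2}\cdot{\bf 1}=\sum_{p\in S}\lambda_p\cdot p$ in which every coefficient $\lambda_p$ is strictly positive. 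Then for all sufficiently small $\varepsilon>0$, the vector $\lambda+\varepsilon\mu$ is again a nonnegative convex representation of $\tfrac{1}{2}\cdot{\bf 1}$, and it differs from $\lambda$. This yields two distinct convex combinations, and therefore two distinct fractional packings of value two, completing the proof.

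The argument is essentially routine once \Cref{setcore} is in hand; the only subtle point is producing the second convex representation in the reverse direction, which is handled by the standard perturbation trick using the fact that $\tfrac12\cdot {\bf 1}$ is an \emph{interior} point (so strictly positive coefficients exist and remain nonnegative under small perturbations). No other obstacles are anticipated.
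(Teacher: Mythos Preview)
Your proposal is correct and follows essentially the same approach as the paper: use \Cref{setcore}~(ii) to translate the question into uniqueness of a convex representation of $\tfrac12\cdot{\bf 1}$, and then invoke \Cref{setcore}~(iii) together with the standard convex-geometry fact that an interior point of a full-dimensional polytope has a unique convex representation in terms of its vertices if and only if the polytope is a simplex. The paper's proof simply asserts that last step (``the theorem follows''), whereas you spell out the perturbation argument for the reverse direction; the extra detail is sound and does not constitute a different route.
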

\begin{proof}
Let $S:=\setcore{\mathcal{C}}$. \Cref{setcore}~(ii) states that there is a one-to-one correspondence between the fractional packings of value two in $\mathcal{C}$ and the different ways to describe $\frac12\cdot{\bf 1}$ as a convex combination of the points in $S$. As a consequence, $\mathcal{C}$ has a unique fractional packing of value two if, and only if, $\frac12\cdot {\bf 1}$ can be written as a unique combination of the points in $S$. Since $\frac12\cdot {\bf 1}$ lies in the interior of $\conv(S)$ by \Cref{setcore}~(iii), the theorem follows.
\end{proof}

\section{From projective geometries to simplices}\label{sec:PG->simplex}

In this section, we show that the cocycle space of every projective geometry forms a simplex, and then prove \Cref{main-PG} $(\Leftarrow)$ as an immediate consequence.

\subsection{A primer on binary spaces and binary matroids}\label{sec:PG->simplex/primer}

Take an integer $n\geq 1$ and a set $S\subseteq \{0,1\}^n$. We say that $S$ is an \emph{affine vector space over $GF(2)$}, or simply an \emph{affine binary space}, if $a\tr b\tr c \in S$ for all points $a,b,c\in S$. If $S$ contains~${\bf 0}$, then $S$ is called a \emph{binary space}. Binary spaces enjoy the following transitive property:

\begin{RE}\label{transitive}
$S = S\tr a$ for every binary space $S$ and every point $a\in S$.
\end{RE}

Suppose $S$ is a binary space. By Basic Linear Algebra, there is a $0-1$ matrix $A$ with $n$ columns such that $S = \{x:Ax\equiv {\bf 0} \pmod{2}\}$. Let $M$ be the binary matroid over ground set $EM:=[n]$ that is represented by $A$. The {\it cycle space of $M$} is the set $\cycle{M}:=S$ and the \emph{cocycle space of $M$}, denoted $\cocycle{M}\subseteq \{0,1\}^n$, is the row space of $A$ over $GF(2)$. Notice that $\cycle{M},\cocycle{M}$ are binary spaces that are orthogonal complements. Observe that the binary matroid $M$ can be fully determined by either $A$, its cycle space or its cocycle space.

 A \emph{cycle of $M$} is a subset $C\subseteq EM$ such that $\chi_C\in \cycle{M}$, and a \emph{cocycle of $M$} is a subset $D\subseteq EM$ such that $\chi_D\in \cocycle{M}$. In particular, $\emptyset$ is both a cycle and a cocycle. Notice that every cycle and every cocycle have an even number of elements in common. A \emph{circuit of $M$} is a nonempty cycle that does not contain another nonempty cycle, and a \emph{cocircuit of $M$} is a nonempty cocycle that does not contain another nonempty cocycle. It is well-known that every cycle is either empty or the disjoint union of some circuits, and that every cocycle is either empty or the disjoint union of some cocircuits~\cite{Oxley11}. Observe that the cycles, circuits, cocycles, and cocircuits of $M$ correspond respectively to the cocycles, cocircuits, cycles, and circuits of the dual matroid~$M^\star$.

An element $e\in EM$ is a \emph{loop of $M$} if $\{e\}$ is a circuit, and it is a \emph{coloop of $M$} if $\{e\}$ is a cocircuit. Two distinct elements $e,f\in EM$ are \emph{parallel in $M$} if $\{e,f\}$ is a circuit. $M$ is a \emph{simple} binary matroid if it has no loops and no parallel elements, i.e. if every circuit has cardinality at least three. A \emph{triangle in $M$} is a circuit of cardinality three.

\begin{RE}\label{coloop}
Take an integer $n\geq 1$ and a binary space $S\subseteq \{0,1\}^n$, and let $M$ be the binary matroid whose cycle space is $S$. Then the points in $S$ do not agree on a coordinate if, and only if, $M$ has no coloops. Moreover, if the points in $S$ do not agree on a coordinate, then $|S\cap \{x:x_i=0\}| = |S\cap \{x:x_i=1\}|$ for all $i\in [n]$.
\end{RE}

\subsection{Proof of \Cref{main-PG} $(\Leftarrow)$}\label{sec:PG->simplex/proof}

Take an integer $k\geq 1$, and let $A$ be the $k\times (2^k-1)$ matrix whose columns are all the distinct $0-1$ vectors of dimension~$k$ that are nonzero. Recall that $PG(k-1,2)$ is the binary matroid represented by $A$, $\cycle{PG(k-1,2)} = \{x:Ax\equiv {\bf 0} \pmod{2}\}$ and $\cocycle{PG(k-1,2)}$ is the row space of $A$ generated over $GF(2)$. Recall further that $|\cocycle{PG(k-1,2)}|=2^k$. As $A$ has no zero column, and no two columns of it are equal, $PG(k-1,2)$ is a simple binary matroid. In particular, the points in $\cocycle{PG(k-1,2)}$ do not agree on a coordinate by \Cref{coloop}.

\begin{PR}\label{PG}
Take an integer $k\geq 2$. Then the following statements hold for $PG(k-1,2)$: \begin{enumerate}[(i)]
\item every nonempty cocycle has cardinality $2^{k-1}$,
\item every two elements appear together in a triangle,
\item every cycle is the symmetric difference of some triangles.
\end{enumerate}
\end{PR}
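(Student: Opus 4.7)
The three parts are essentially consequences of the explicit description of $PG(k-1,2)$: its elements are the nonzero vectors in $\{0,1\}^k$, and a subset $C$ of elements is a cycle precisely when these vectors sum to ${\bf 0}$ modulo $2$. I would handle each part in turn.

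For \textbf{(i)}, I would parametrize cocycles. By definition, $\cocycle{PG(k-1,2)}$ is the row space of $A$, so every cocycle has the form $y^\top A \bmod 2$ for some $y \in \{0,1\}^k$, and it is nonempty exactly when $y \neq {\bf 0}$. The support of $y^\top A$ is $\{a \in \{0,1\}^k \setminus \{{\bf 0}\} : y^\top a \equiv 1 \pmod 2\}$. Since $y \neq {\bf 0}$, the map $a \mapsto y^\top a$ is a nonzero linear functional on $GF(2)^k$, so the set of $a \in \{0,1\}^k$ with $y^\top a = 1$ is an affine hyperplane of size $2^{k-1}$; since ${\bf 0}$ is not in it, the cocycle itself has cardinality $2^{k-1}$.

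For \textbf{(ii)}, given two distinct elements $a,b$ (nonzero vectors in $\{0,1\}^k$), set $c := a \tr b$. Then $c \neq {\bf 0}$ (as $a \neq b$), $c \neq a$ (else $b = {\bf 0}$), and $c \neq b$ (else $a = {\bf 0}$), so $\{a,b,c\}$ is a set of three distinct elements whose vectors sum to ${\bf 0}$, hence a cycle. To see it is actually a circuit, note that $PG(k-1,2)$ is simple, so no singleton or pair can be a cycle; thus $\{a,b,c\}$ contains no smaller nonempty cycle and is a triangle containing $a$ and $b$.

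For \textbf{(iii)}, since every cycle is a disjoint union of circuits, it suffices to show every circuit is a symmetric difference of triangles. I would induct on $|C|$. If $|C| \leq 3$, then since the matroid is simple $C$ is either empty or a triangle, and we are done. If $|C| \geq 4$, pick two distinct elements $a,b \in C$; by \textbf{(ii)} there is a triangle $T = \{a,b,c\}$ with $c = a \tr b$. Then $C \tr T$ is a symmetric difference of two cycles, hence a cycle, and $|C \tr T| \leq |C| - 1 < |C|$ (we lose $a,b$ and possibly gain or further lose $c$). By the inductive hypothesis, $C \tr T$ is a symmetric difference of triangles, and therefore so is $C = (C \tr T) \tr T$.

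There is no real obstacle here; the only thing to be slightly careful about is ensuring in each construction that the vector $a \tr b$ represents a genuine third element, which is immediate from simplicity of $PG(k-1,2)$ and from $a,b$ being distinct nonzero vectors.
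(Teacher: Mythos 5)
Your proof is correct and follows essentially the same route as the paper's: parts (ii) and (iii) are argued identically (the third element of the triangle is $a\tr b$, and then one inducts on $|C|$ by taking the symmetric difference with a triangle through two elements of $C$). The only variation is in (i), where you count the support of $y^\top A$ directly as an affine hyperplane of $GF(2)^k$ minus the excluded origin, whereas the paper changes basis so that $\chi_D$ becomes a row of a representation matrix whose columns are forced to be all the nonzero vectors; your version is a touch more direct but entirely equivalent.
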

\begin{proof}
{\bf (i)} Let $D$ be a nonempty cocycle. Then $\chi_D$ is nonzero and belongs to $\cocycle{PG(k-1,2)}$. Let $A'$ be a $k\times (2^k-1)$ matrix with $0-1$ entries whose first row is $\chi_D$ and whose rows form a basis for $\cocycle{PG(k-1,2)}$ over $GF(2)$. Notice that the orthogonal complement of $\cocycle{PG(k-1,2)}$ over $GF(2)$ is equal to $$\cycle{PG(k-1,2)}=\{x:A'x\equiv {\bf 0} \pmod{2}\}.$$ As $PG(k-1,2)$ is a simple binary matroid, it follows that $A'$ has no zero column, and no two columns of it are equal. As $A'$ has $2^k-1$ columns and $k$ rows, it follows that the columns of $A'$ are all the $0-1$ vectors of dimension $k$ that are nonzero. In particular, every row of $A'$ has $2^{k-1}$ ones and $2^{k-1}-1$ zeros. In particular, $|D|=2^{k-1}$.

{\bf (ii)} Let $A$ be the $k\times (2^k-1)$ matrix representation of $PG(k-1,2)$ whose columns are all the $0-1$ vectors of dimension $k$ that are nonzero. Pick distinct elements $e,f$ of $PG(k-1,2)$, and let $a,b$ be the corresponding columns of $A$. Notice that $a+b \pmod{2}$ is another column of $A$, and let $g$ be the corresponding element of $PG(k-1,2)$. Then $\{e,f,g\}$ is the desired triangle of $PG(k-1,2)$.

{\bf (iii)} Let $C$ be a nonempty cycle. We proceed by induction on $|C|\geq 3$. The base $|C|=3$ holds trivially. For the induction step assume that $|C|\geq 4$. Pick distinct elements $e,f\in C$. By (ii) there is an element $g$ such that $\{e,f,g\}$ is a triangle. Since $C\triangle \{e,f,g\}$ is a cycle of smaller cardinality than $C$, the induction hypothesis applies and tell us that $C\triangle \{e,f,g\}$ is the symmetric difference of some triangles, implying in turn that $C$ is the symmetric difference of some triangles, thereby completing the induction step.
\end{proof}

We are now ready to present the key result of this subsection:

\begin{theorem}\label{PG-simplex}
Take an integer $k\geq 1$ and let $S:=\cocycle{PG(k-1,2)}$. Then $\conv(S)$ is a full-dimensional simplex containing $\frac12\cdot {\bf 1}$ in its interior. In particular, $\frac{1}{2^{k-1}} \cdot {\bf 1}$ is the unique fractional packing of $\cuboid(S)$ of value two.
\end{theorem}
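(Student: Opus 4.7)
The plan is to prove the stronger statement that the $2^k$ points of $S$ are affinely independent in $\mathbb{R}^{2^k-1}$; since this matches the dimension, $\conv(S)$ is automatically a full-dimensional simplex with vertex set $S$. The remaining assertions then drop out: $\tfrac12 \cdot \mathbf{1}$ will turn out to be the centroid of this simplex, which forces it into the interior, and uniqueness of the fractional packing will come from uniqueness of convex combinations in a simplex together with \Cref{fp-cc}.

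For the affine independence step, I would parametrize $S$ as $\{f_y : y \in \{0,1\}^k\}$, where $(f_y)_v = (y^\top v) \bmod 2$ after identifying the coordinate set $[2^k-1]$ with $\{0,1\}^k \setminus \{\mathbf{0}\}$ via the columns of $A$. The main trick is the identity
$$(y^\top v)\bmod 2 \;=\; \tfrac12\bigl(1 - (-1)^{y^\top v}\bigr),$$
which converts a hypothetical affine dependence $\sum_y \lambda_y f_y = \mathbf{0}$ together with $\sum_y \lambda_y = 0$ into $\sum_y \lambda_y (-1)^{y^\top v} = 0$ for every nonzero $v \in \{0,1\}^k$, while the case $v = \mathbf{0}$ is recorded precisely by $\sum_y \lambda_y = 0$. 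In other words, $\sum_y \lambda_y \chi_y = \mathbf{0}$ in $\mathbb{R}^{\{0,1\}^k}$, where $\chi_y(v) := (-1)^{y^\top v}$ are the Walsh--Hadamard characters. These characters satisfy $\langle \chi_y, \chi_{y'}\rangle = 2^k \delta_{y,y'}$, hence are $\mathbb{R}$-linearly independent, so $\lambda \equiv 0$ as required.

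To finish, since $PG(k-1,2)$ is simple it has no coloops, so by \Cref{coloop} each coordinate of $S$ equals $1$ on exactly $|S|/2 = 2^{k-1}$ of the points of $S$. Therefore $\sum_{a \in S} a = 2^{k-1} \cdot \mathbf{1}$, i.e. the centroid $\tfrac{1}{2^k}\sum_{a \in S} a$ equals $\tfrac12 \cdot \mathbf{1}$, which sits in the interior of the full-dimensional simplex $\conv(S)$. Affine independence makes $\alpha_a \equiv \tfrac{1}{2^k}$ the unique convex combination of the points of $S$ producing $\tfrac12 \cdot \mathbf{1}$, and \Cref{fp-cc} then translates this (via $\alpha_p = \tfrac12 \cdot y_C$) into the unique fractional packing of $\cuboid(S)$ of value two, given by $y_C = \tfrac{1}{2^{k-1}}$ for every $C$. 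The main obstacle throughout is the affine-independence step; once the Walsh--Hadamard argument supplies it, everything else is bookkeeping. An alternative route would be a direct determinant computation on the Hadamard-type matrix $\bigl[(-1)^{y^\top v}\bigr]_{y,v \in \{0,1\}^k}$, but the character argument above is cleaner and avoids a cofactor calculation.
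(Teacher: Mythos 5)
Your proof is correct, but it reaches the simplex conclusion by a genuinely different route from the paper. The paper works with the H-representation: using \Cref{PG}~(i) (every nonempty cocycle has cardinality $2^{k-1}$) together with the transitivity of binary spaces (\Cref{transitive}), it exhibits, for each $p\in S$, a valid inequality that is tight on every point of $S$ except $p$; this simultaneously shows that $\conv(S)$ is an $n$-dimensional simplex and hands over the $n+1$ facets explicitly, after which $\frac12\cdot{\bf 1}$ is seen to satisfy each inequality strictly. You instead establish the V-representation directly, proving affine independence of the $2^k$ points via the identity $(y^\top v)\bmod 2=\frac12\bigl(1-(-1)^{y^\top v}\bigr)$ and the orthogonality of the Walsh--Hadamard characters $\chi_y(v)=(-1)^{y^\top v}$ on all of $\{0,1\}^k$ (with $v={\bf 0}$ encoding the affine constraint $\sum_y\lambda_y=0$). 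Your route bypasses \Cref{PG}~(i) and \Cref{transitive} entirely and is self-contained linear algebra, at the cost of not producing the facet inequalities; the two proofs then coincide in the endgame, both invoking \Cref{coloop} to identify $\frac12\cdot{\bf 1}$ as the centroid and \Cref{fp-cc} plus uniqueness of barycentric coordinates to get the unique fractional packing $\frac{1}{2^{k-1}}\cdot{\bf 1}$. One tiny bookkeeping point: your argument tacitly uses $|S|=2^k$ (i.e.\ that the $f_y$ are pairwise distinct), but this is immediate since a coincidence $f_y=f_{y'}$ with $y\neq y'$ would itself be an affine dependence that your character argument excludes.
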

\begin{proof}
Let $n:=2^k-1$. We know that $S$ is a subset of $\{0,1\}^n$ and has exactly $n+1$ points. It follows from \Cref{PG}~(i) that the inequality $$\sum_{i=1}^n x_i \leq \frac{n+1}{2}$$ is valid for $\conv(S)$, and that every point in $S$ except for ${\bf 0}$ satisfies this inequality at equality. As $S$ is a binary space, $S\tr p = S$ for every point $p\in S$ by~\Cref{transitive}. This transitive property implies that for each $p\in S$, the transformed inequality $$\sum_{i : p_i=0} x_i + \sum_{j : p_j =1} (1-x_j) \leq \frac{n+1}{2}$$ is also valid for $\conv(S)$, and every point in $S$ except for $p$ satisfies this inequality at equality. Hence, $\conv(S)$ is an $n$-dimensional simplex whose $n+1$ facets are as described above. 

As the point $\frac12\cdot {\bf 1}$ satisfies every inequality strictly, it lies in the interior of $\conv(S)$. In fact, as $S$ is a binary space whose points do not agree on a coordinate, $|S\cap \{x:x_i=0\}| = |S\cap \{x:x_i=1\}|$ for each $i\in [n]$ by~\Cref{coloop}, so $$\sum_{p\in S} \frac{1}{n+1} \cdot p = \frac12 \cdot {\bf 1}.$$ As $\conv(S)$ is a simplex, it follows from \Cref{fp-cc} that $\frac{2}{n+1}\cdot {\bf 1} = \frac{1}{2^{k-1}} \cdot {\bf 1}$ is the unique fractional packing of $\cuboid(S)$ of value two, thereby finishing the proof of \Cref{PG-simplex}.
\end{proof}

As a consequence,

\begin{proof}[Proof of \Cref{main-PG} $(\Leftarrow)$]
Let $\mathcal{C}$ be a clean tangled clutter. If $\setcore{\mathcal{C}}$ is the cocycle space of a projective geometry, then \Cref{PG-simplex} implies that $\conv(\setcore{\mathcal{C}})$ is a simplex, as required.
\end{proof}

\section{From simplices to projective geometries}\label{sec:simplex->PG}

In this section, after presenting a lemma on constructing projective geometries, we prove \Cref{main-PG}~$(\Rightarrow)$, and then present an appealing consequence characterizing when a simplex comes from a projective geometry. 

We start with the following key lemma allowing for an inductive argument:

\begin{LE}\label{fp-recursive-2}
Let $\mathcal{C}$ be a clean tangled clutter with a unique fractional packing of value two. Suppose $G(\mathcal{C})$ is not connected, and let $\{U,U'\}$ be the bipartition of some connected component of $G(\mathcal{C})$. Then $\mathcal{C}\setminus U/U'$ is a clean tangled clutter with a unique fractional packing of value two. Moreover, if $y,z$ are the fractional packings of $\mathcal{C},\mathcal{C}\setminus U/U'$ of value two, respectively, then $\supp(z) = \supp(y)\setminus U/U'$.
\end{LE}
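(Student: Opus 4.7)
The overall strategy is to combine \Cref{fp-recursive} with the uniqueness of the fractional packing of $\mathcal{C}$ to pin down the fractional packing of $\mathcal{C}\setminus U/U'$ of value two explicitly in terms of $y$. First I would note that $\mathcal{C}' := \mathcal{C}\setminus U/U'$ is a clean tangled clutter by \Cref{bipartite}, and by \Cref{dense} it therefore has at least one fractional packing of value two; the same applies symmetrically to $\mathcal{C}'' := \mathcal{C}/U\setminus U'$. So existence is automatic and all the content is in uniqueness and in the support identity.

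The key structural observation I would establish is that \emph{every member $C$ of $\mathcal{C}$ disjoint from $U$ satisfies $U' \subseteq C$}, and symmetrically. Indeed, for any $u' \in U'$ there is a neighbour $w\in U$ in $G(\mathcal{C})$, so $\{u',w\}$ is a minimum cover; if $C \cap U = \emptyset$ then $C$ must meet $\{u',w\}$, forcing $u' \in C$. Varying $u'$ gives $U' \subseteq C$. In particular, if $C \cap U = \emptyset$ then $C \cap U' \neq \emptyset$, so the definition in \Cref{fp-recursive} forces $(y_{z'})_C = 0$ for any fractional packing $z'$ of $\mathcal{C}''$ of value two. Symmetrically, $(y_z)_C = 0$ whenever $C \cap U' = \emptyset$.

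With this in hand, uniqueness is a one-line argument. Fix a fractional packing $z'$ of $\mathcal{C}''$ of value two. For any two fractional packings $z_1, z_2$ of $\mathcal{C}'$ of value two, \Cref{fp-recursive} produces fractional packings $y_i := \tfrac12 y_{z_i} + \tfrac12 y_{z'}$ of $\mathcal{C}$ of value two, and the uniqueness hypothesis gives $y_1 = y_2 = y$. Evaluating on any $C \in \mathcal{C}$ with $C\cap U = \emptyset$ (using $(y_{z'})_C = 0$) yields
\begin{equation*}
\tfrac{1}{2}(z_1)_{C-U'} \;=\; y_C \;=\; \tfrac{1}{2}(z_2)_{C-U'},
\end{equation*}
so $z_1 = z_2$ on all of $\mathcal{C}'$.

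The same identity with $z$ in place of the $z_i$ proves the moreover: $z_{C-U'} = 2y_C$ for every $C \in \mathcal{C}$ with $C \cap U = \emptyset$, hence $z_{C-U'} > 0$ iff $C \in \supp(y)$. Since every $C \in \supp(y) \subseteq \core{\mathcal{C}}$ meets $U \cup U'$ in exactly $U$ or $U'$ by \Cref{core->cuboid}, the map $C \mapsto C - U'$ is a bijection between $\{C \in \supp(y) : C \cap U = \emptyset\}$ and an antichain of $\mathcal{C}'$, so no inclusion-minimization is needed and $\supp(z) = \supp(y)\setminus U/U'$. The only real obstacle is the structural observation about members disjoint from $U$; once that ensures $y_z$ and $y_{z'}$ have disjoint supports on each side of $\{U,U'\}$, the uniqueness hypothesis mechanically separates the two halves of the decomposition and the rest is bookkeeping.
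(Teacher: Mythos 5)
Your proposal is correct and follows essentially the same route as the paper: apply \Cref{bipartite} and \Cref{dense} to get packings $z,z'$ of the two minors, lift them via \Cref{fp-recursive} to a packing $\tfrac12 t+\tfrac12 t'$ of $\mathcal{C}$, and invoke uniqueness to force $\tfrac12 t+\tfrac12 t'=y$, from which uniqueness of $z$ and the support identity follow. The only difference is that you spell out the steps the paper leaves implicit (the observation that members disjoint from $U$ contain $U'$, hence the two lifted packings have disjoint supports, and the antichain check for $\supp(y)\setminus U/U'$), all of which are accurate.
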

\begin{proof}
By \Cref{bipartite}, $\mathcal{C}\setminus U/U'$ and $\mathcal{C}/U\setminus U'$ are clean tangled clutters, so we may apply \Cref{dense} and conclude that they have fractional packings $z,z'$ of value two, respectively. Let $t,t'\in \mathbb{R}^{\mathcal{C}}_+$ be defined as follows: $$
t_C:=\left\{
\begin{array}{ll}
z_{C-U'} ~\quad\text{ if } C\cap U=\emptyset\\
0 ~\quad\quad\quad\text{ otherwise}
\end{array}\right.
\quad\text{and}\quad
t'_C:=\left\{
\begin{array}{ll}
z'_{C-U} ~\quad\text{ if } C\cap U'=\emptyset\\
0 ~\quad\quad\quad\text{ otherwise.}
\end{array}\right.
$$ By \Cref{fp-recursive}, $\frac12 t + \frac12 t'$ is a fractional packing of $\mathcal{C}$ of value two. It therefore follows from the uniqueness assumption that $\frac12 t + \frac12 t' = y$. Subsequently, $z$ must be the unique fractional packing of $\mathcal{C}\setminus U/U'$ of value two, $z'$ must be the unique fractional packing of $\mathcal{C}/ U\setminus U'$ of value two, and \begin{align*}
\supp(z) &= \supp(y)\setminus U/U' \\
\supp(z') &= \supp(y)/ U\setminus U',\end{align*} as desired.~\end{proof}

\subsection{Constructing projective geometries}\label{sec:unique-dyadic/construction}

For an integer $r\geq 1$ and a set $S\subseteq \{0,1\}^r$, the {\it incidence matrix of $S$} is the matrix whose rows are the points in~$S$. Denote by $J$ the all-ones matrix of appropriate dimensions. Take an integer $k\geq 1$ and let $A$ be the incidence matrix of $\cocycle{PG(k-1,2)}$. Then every column of $A$ has $2^{k-1}$ ones and $2^{k-1}$ zeros. In fact, $A$ has the following recursive description:

\begin{RE}\label{PG-recursive}
Take an integer $k\geq 2$. If $A'$ is the incidence matrix of $\cocycle{PG(k-2,2)}$, then up to permuting rows and columns, $$
\begin{pmatrix}
{\bf 1}&A'&J-A'\\
{\bf 0}&A'&A'
\end{pmatrix}
$$ is the incidence matrix of $\cocycle{PG(k-1,2)}$. Moreover, every element of $PG(k-1,2)$ can be used as the left-most column in the incidence matrix above.
\end{RE}

Consequently, for every pair $a,b$ of columns of $A$, $$|\{j:a_j=b_j=0\}|=|\{j:a_j=b_j=1\}|=|\{j:a_j=1, b_j=0\}|=|\{j:a_j=0,b_j=1\}|=2^{k-2}.$$ Two columns of a $0-1$ matrix are {\it complementary} if they add up to the all-ones vector. If $\mathcal{C}$ is the cuboid of $\cocycle{PG(k-1,2)}$, then every column of $M(\mathcal{C})$ has $2^{k-1}$ ones, and by the expressions above, every pair of columns of $M(\mathcal{C})$ are either complementary or have exactly $2^{k-2}$ ones in common.

\begin{RE}\label{PG->PG}
Take an integer $k\geq 2$, and let $\mathcal{C}:=\cuboid(\cocycle{PG(k-1,2)})$. Then for every minimum cover $\{u,v\}$ of $\mathcal{C}$, the minor $\mathcal{C}\setminus u/v$ is obtained from $\cuboid(\cocycle{PG(k-2,2)})$ after duplicating every element once.
\end{RE}

This remark is an immediate consequence of \Cref{PG-recursive}, and is helpful to keep in mind when parsing the hypotheses of the following lemma, which is the main result of this section:

\begin{LE}\label{unique-lemma}
Take an integer $r\geq 2$ and a clutter $\mathcal{C}$ whose ground set $V$ is partitioned into nonempty parts $U_1,V_1,\ldots,U_r,V_r$ such that \begin{itemize}
\item the elements in each part are duplicates, 
\item for each $i\in [r]$, if $u\in U_i$ and $v\in V_i$, then $\{u,v\}$ is a transversal of $\mathcal{C}$, and
\item for each $i\in [r]$, $\mathcal{C}\setminus U_i/V_i$ (resp. $\mathcal{C}/U_i\setminus V_i$) is a duplication of the cuboid of the cocycle space of a projective geometry.
\end{itemize}
Assume further that $\mathcal{C}$ has exactly $r+1$ members and a unique fractional packing of value two. Then there is an integer $k\geq 2$ such that $r=2^k-1$ and $\mathcal{C}$ is a duplication of $\cuboid(\cocycle{PG(k-1,2)})$.
\end{LE}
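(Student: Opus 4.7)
The plan is to encode $\mathcal{C}$ as a duplication of $\cuboid(S)$ for a set $S\subseteq\{0,1\}^r$, use the hypotheses on the minors to pin down the unique fractional packing, and then identify $S$ with $\cocycle{PG(k-1,2)}$ by induction on $k$.

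\medskip

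\noindent\textbf{Encoding and simplex structure.} The transversal condition together with the duplication of the elements inside each $U_i$ (and each $V_i$) forces every member of $\mathcal{C}$ to contain exactly one of $U_i,V_i$ in its entirety. Encoding each member $C$ by $p\in\{0,1\}^r$ with $p_i=0$ iff $C\supseteq U_i$ produces a set $S\subseteq\{0,1\}^r$ with $|S|=|\mathcal{C}|=r+1$ such that $\mathcal{C}$ is a duplication of $\cuboid(S)$. Set $S_i^-:=\{p\in S:p_i=0\}$ and $S_i^+:=\{p\in S:p_i=1\}$. I would next argue that $S$ has no duplicate coordinates: if coordinates $i\neq j$ of $S$ were duplicates, then either $U_j$ or $V_j$ would be disjoint from every member of the minor $\mathcal{C}\setminus U_i/V_i$, contradicting the hypothesis that this minor is a duplication of a cuboid (since $\cocycle{PG(k-1,2)}$ has no constant coordinate, every element of its cuboid lies in some member, and duplication preserves this). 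By \Cref{fp-cc}, the unique fractional packing of value two corresponds to a unique convex combination of $S$ giving $\tfrac12\mathbf{1}$, so the $r+1$ points of $S$ are affinely independent in $\mathbb{R}^r$; that is, $\conv(S)$ is a full-dimensional simplex.

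\medskip

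\noindent\textbf{Constancy of the fractional packing.} Let $y$ denote the unique fractional packing of $\mathcal{C}$ of value two, and for each $i\in[r]$ let $k_i^+\geq 1$ be such that $\mathcal{C}\setminus U_i/V_i$ is a duplication of $\cuboid(\cocycle{PG(k_i^+-1,2)})$; define $k_i^-$ analogously. By \Cref{PG-simplex}, this minor has a unique fractional packing of value two taking the constant value $\tfrac{1}{2^{k_i^+-1}}$ on each member, and applying \Cref{fp-recursive-2} together with the $\tfrac12$ factor in the construction of \Cref{fp-recursive} yields $y_C=\tfrac{1}{2^{k_i^+}}$ whenever the $p\in S$ corresponding to $C$ satisfies $p_i=1$, and symmetrically $y_C=\tfrac{1}{2^{k_i^-}}$ if $p_i=0$. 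It follows that $y_C=y_{C'}$ whenever the associated $p,p'\in S$ agree in some coordinate. Each point of $\{0,1\}^r$ has a unique antipode, so any three distinct points of $S$ contain a non-antipodal pair; since $|S|=r+1\geq 3$, this propagates to show $y$ is constant on $\mathcal{C}$, equal to $\tfrac{2}{r+1}$. Matching $\tfrac{2}{r+1}=\tfrac{1}{2^{k_i^\pm}}$ forces $r+1=2^k$ for some integer $k\geq 2$ and $k_i^\pm=k-1$ for all $i$. Consequently $r=2^k-1$ and every minor $\mathcal{C}\setminus U_i/V_i$, $\mathcal{C}/U_i\setminus V_i$ is a duplication of $\cuboid(\cocycle{PG(k-2,2)})$.

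\medskip

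\noindent\textbf{Induction on $k$.} The base case $k=2$ (so $r=3$, $|S|=4$) follows by enumeration: for each $i\in[3]$ the two points in $S_i^+$ must be antipodal in the other two coordinates (so that the two corresponding members of the minor $\mathcal{C}\setminus U_i/V_i$ are disjoint, as required for a duplication of $\cuboid(\cocycle{PG(0,2)})$), and the same holds for $S_i^-$; after twisting so that $\mathbf{0}\in S$ this forces $S=\{000,011,101,110\}=\cocycle{PG(1,2)}$. For the inductive step I would twist $S$ so $\mathbf{0}\in S$, fix coordinate~$1$, and let $\pi_1:\{0,1\}^r\to\{0,1\}^{r-1}$ drop the first coordinate. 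Setting $T^-:=\pi_1(S_1^-)$ and $T^+:=\pi_1(S_1^+)$, each is a duplication of $\cocycle{PG(k-2,2)}$ and hence an affine binary space of dimension $k-1$; moreover $\mathbf{0}\in T^-$ so $T^-$ is a binary space. The decisive claim is the \emph{parallelism} $T^+=T^-\triangle w$ for some $w\in T^+$. Granted this, $S=\{(0,t):t\in T^-\}\cup\{(1,t\triangle w):t\in T^-\}$ is itself a binary space of dimension $k$, hence $S=\cocycle{M}$ for a binary matroid $M$ of rank $k$ on $r=2^k-1$ elements. Since $S$ has no constant coordinate and no duplicate coordinate, $M$ has no loops and no parallel pairs, so $M$ is the unique simple rank-$k$ binary matroid on $2^k-1$ elements, namely $PG(k-1,2)$. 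Therefore $\mathcal{C}$ is a duplication of $\cuboid(\cocycle{PG(k-1,2)})$, as required.

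\medskip

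The main obstacle is proving the parallelism of $T^-$ and $T^+$. I expect to deduce it by slicing $S$ simultaneously along coordinate~$1$ and some other coordinate~$j$, using the PG structure of both $\pi_1(S_1^\pm)$ and $\pi_j(S_j^\pm)$ together with the already established constancy of $y$ to force the duplicate-coordinate pairings in $T^-$ and $T^+$ to coincide, so that these two affine binary spaces differ by a single global translation.
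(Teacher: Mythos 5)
Your setup and your derivation of the numerology are sound, and your route to $r=2^k-1$ is arguably cleaner than the paper's: the paper gets there by counting ones in columns of $M(\mathcal{C})$ across the two minors at each coordinate ($2^{k_w-1}=2^{k_u-2}+2^{k_{f(u)}-2}$), whereas you combine the unique \emph{constant} packings of the two minors, invoke uniqueness of $y$, and propagate equality of $y$-values across non-antipodal pairs of points of $S$. One caveat: you cite \Cref{fp-recursive-2}, which is stated only for clean tangled clutters, and $\mathcal{C}$ is not assumed clean here. The argument survives because the combination $\tfrac12 y^++\tfrac12 y^-$ of the lifted packings of $\mathcal{C}\setminus U_i/V_i$ and $\mathcal{C}/U_i\setminus V_i$ can be checked directly to be a fractional packing of $\mathcal{C}$ of value two using only the transversal hypothesis, after which uniqueness of $y$ does the rest; but you should say this rather than cite a lemma whose hypotheses fail.

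The proposal is nonetheless incomplete, because the entire difficulty of the lemma is concentrated in the ``parallelism'' claim $T^+=T^-\triangle w$, which you state, defer, and only gesture at. This is not a routine verification: a priori $T^-$ and $T^+$ are two affine binary spaces of dimension $k-1$ that could be cosets of \emph{different} linear subspaces of $\{0,1\}^{r-1}$, and ruling this out is exactly where the paper invests its effort. The paper's mechanism is combinatorial and column-based: first (its Claim~4) each minor $\mathcal{C}\setminus f(u)/u$ is obtained from $\cuboid(\cocycle{PG(k-2,2)})$ by duplicating every element \emph{exactly once}, and a pair of identical columns in $M(\mathcal{C}\setminus f(u)/u)$ corresponds to a \emph{complementary} pair in $M(\mathcal{C}/f(u)\setminus u)$; hence (its Claim~5) for any two columns of the incidence matrix $A$ of $S$ there is a third column equal to their sum mod~$2$, the zero row of $A$ being what selects $f(v)$ over $v$; and finally (its Claim~6) a triangle-by-triangle argument via \Cref{PG}~(iii) aligns the two blocks of $A$ coming from $S_1^-$ and $S_1^+$. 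Your row-based plan would have to reproduce an argument of this strength, and the ``slicing along coordinates $1$ and $j$'' sketch does not yet do so; as written, the conclusion is assumed rather than proved at its critical point.
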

\begin{proof}
We may assume after contracting some duplicate elements that $U_i=\{u_i\}$ and $V_i=\{v_i\}$ for each $i\in [r]$. In particular, $\mathcal{C}$ is a cuboid. As $\mathcal{C}$ has a fractional packing of value two, it follows that $\tau(\mathcal{C})\geq 2$, so $\mathcal{C}$ is a tangled clutter. For each $i\in [r]$, let $f(u_i) := v_i$ and $f(v_i) := u_i$.

\begin{claim} 
$\mathcal{C}$ does not have duplicated elements. In particular, if $\{u,v\}$ is a transversal of $\mathcal{C}$, then $v=f(u)$.
\end{claim}
\begin{cproof}
Suppose for a contradiction that $u,u'$ are duplicates. Since $\tau(\mathcal{C})= 2$, $\{u,u'\}$ is not a cover, so $u'\neq f(u)$. But then $\mathcal{C}\setminus f(u)/u$ has $\{u'\}$ as a cover, a contradiction as $\mathcal{C}\setminus f(u)/u$ is a duplication of the cuboid of the cocycle space of a projective geometry.
\end{cproof}

In what follows the reader should keep in mind that our labeling of the columns of $M(\mathcal{C})$ induces a labeling for the columns of $M(\mathcal{C}\setminus f(u)/u)$, for each $u\in V$. In particular, $M(\mathcal{C}\setminus f(u)/u)$ and $M(\mathcal{C}/f(u)\setminus u)$ have the same column labels, for each $u\in V$.

\begin{claim} 
There is an integer $k\geq 2$ such that the following statements hold: \begin{enumerate}[(1)]
\item for each $u\in V$, $\mathcal{C}\setminus f(u)/u$ is a duplication of $\cuboid(\cocycle{PG(k-2,2)})$,
\item $|\mathcal{C}|=2^k$,
\item every column of $M(\mathcal{C})$ has exactly $2^{k-1}$ ones,
\item every pair of columns of $M(\mathcal{C})$ are either complementary or have exactly $2^{k-2}$ ones in common.
\end{enumerate}
\end{claim}
\begin{cproof}
For each $u\in V$, $\mathcal{C}\setminus f(u)/u$ is a duplication of $\cuboid(\cocycle{PG(k_u-2,2)})$ for some integer $k_u\geq 2$. In particular, every column $u$ of $M(\mathcal{C})$ has exactly $2^{k_u-1} = |\cocycle{PG(k_u-2,2)}|$ ones. Notice now that if $u\in V$ and $w\in V-\{u,f(u)\}$, then the number of ones in column $w$ of $M(\mathcal{C})$ is equal to the sum of the number of ones in column $w$ of $M(\mathcal{C}\setminus f(u)/u)$ and the number of ones in column $w$ of $M(\mathcal{C}/f(u)\setminus u)$, so $$2^{k_w-1} = 2^{k_u-2} + 2^{k_{f(u)}-2}$$ implying in turn that $k_w=k_u=k_{f(u)}$. As a result, $(k_u:u\in V)$ are all equal to $k$ for some integer $k\geq 2$. It can be readily checked that (1)-(4) hold for $k$, as required.
\end{cproof}

Following up on Claim~2~(4), if a pair of columns of $M(\mathcal{C})$ are complementary, then by Claim~1, the column labels must be $u,f(u)$ for some $u\in V$.

\begin{claim} 
$\frac{1}{2^{k-1}}\cdot {\bf 1}\in \mathbb{R}^{\mathcal{C}}$ is the unique fractional packing of $\mathcal{C}$ of value two.
\end{claim}
\begin{cproof}
This follows from Claim~2~(2)-(3) and our assumption that $\mathcal{C}$ has a unique fractional packing of value two.
\end{cproof}


\begin{claim} 
The following statements hold for every $u\in V$:
\begin{enumerate}
\item[(1)] a pair of identical columns in the matrix $M(\mathcal{C}\setminus f(u)/u)$ correspond to a complementary pair of columns in the matrix $M(\mathcal{C}/ f(u)\setminus u)$,
\item[(2)] $M(\mathcal{C}\setminus f(u)/u)$ does not have three identical columns, 
\item[(3)] $r=2^k-1$, and
\item[(4)] $\mathcal{C}\setminus f(u)/u$ is obtained from $\cuboid(\cocycle{PG(k-2,2)})$ after duplicating every element exactly once.
\end{enumerate}
\end{claim}
\begin{cproof}
{\bf (1)} follows from Claim~2~(4). 
{\bf (2)} follows from (1). 
{\bf (3)} follows from Claim~2~(2) and our assumption that $|\mathcal{C}|=r+1$.
{\bf (4)} Claim~2~(1), together with part (2) of this claim, implies that the minor $\mathcal{C}\setminus f(u)/u$ is obtained from $\cuboid(\cocycle{PG(k-2,2)})$ after duplicating every element at most once. In particular, $$
2r = |V| \leq 2+2\cdot 2\cdot (2^{k-1}-1) = 2\cdot (2^{k}-1).$$ However, $r = 2^k-1$ by part (3) of this claim, so equality must hold throughout the above inequalities, thereby proving (4).
\end{cproof}

Pick $S\subseteq \{0,1\}^r$ containing ${\bf 0}$ such that $\mathcal{C}=\cuboid(S)$. We prove that $S=\cocycle{PG(k-1,2)}$. Denote by $A$ the incidence matrix of $S$. Notice that $A$ is a column submatrix of $M(\mathcal{C})$, and the column labels of $A$ form a subset of $V$ and a transversal of $\{\{u,f(u)\}:u\in V\}$.

\begin{claim} 
In $A$ the sum of every two columns modulo $2$ is equal to another column.
\end{claim}
\begin{cproof}
Pick two columns of $A$ with column labels $u,w\in V$. By Claim~4~(4), in $M(\mathcal{C}\setminus f(u)/u)$, column $w$ is identical to another column $v$. Notice that $v\in V-\{u,f(u),w,f(w)\}$. By Claim~4~(1), in $M(\mathcal{C}/f(u)\setminus u)$, columns $w,v$ are complementary. Thus, in $M(\mathcal{C})$, columns $u,w,v$ add up to ${\bf 1}$ modulo $2$, implying in turn that columns $u,w,f(v)$ add up to ${\bf 0}$ modulo $2$. We know that columns $u,w$ of $M(\mathcal{C})$ are also present in $A$, and that exactly one of the columns $v,f(v)$ of $M(\mathcal{C})$ is present in $A$. As ${\bf 0}\in S$, $A$ has a zero row, so no three of its columns can add up to ${\bf 1}$ modulo $2$, implying in turn that $f(v)$ must be a column of~$A$ instead of $v$. As a result, in $A$, columns $u,w$ add up to column $f(v)$ modulo $2$, as required.
\end{cproof}

We next use \Cref{PG-recursive} to argue that up to permuting rows and columns, $A$ is the incidence matrix of $\cocycle{PG(k-1,2)}$. To this end, denote by $v_0\in V$ the label of the first column of $A$. For $j\in \{0,1\}$, denote by $I_j$ the rows of $A$ corresponding to $\{x\in S: x_{v_0}=j\}$. By Claim~2~(3), $|I_0|=|I_1|=2^{k-1}$. Notice that $\frac{r-1}{2} = 2^{k-1}-1$ by Claim~4~(3). Label the columns of $A$ other than $v_0$ as $v_1,u_1,v_2,u_2,$ $\ldots,v_{\frac{r-1}{2}},u_{\frac{r-1}{2}}$ where for each $i\in \left[\frac{r-1}{2}\right]$, the sum of columns $v_0$ and $v_i$ modulo $2$ is equal to column $u_i$ -- such a labeling exists because of Claim~5. Define matrices $A_1,A_2,A_3,A_4$: \begin{itemize}
\item $A_1$ is the $I_1\times \{v_1,\ldots,v_{\frac{r-1}{2}}\}$ submatrix of $A$,
\item $A_2$ is the $I_1\times \{u_1,\ldots,u_{\frac{r-1}{2}}\}$ submatrix of $A$,
\item $A_3$ is the $I_0\times \{v_1,\ldots,v_{\frac{r-1}{2}}\}$ submatrix of $A$,
\item $A_4$ is the $I_0\times \{u_1,\ldots,u_{\frac{r-1}{2}}\}$ submatrix of $A$.
\end{itemize} Then $A_3=A_4$ and $A_1+A_2 = J$. After swapping the labels $v_i$ and $u_i, i\in \left[\frac{r-1}{2}\right]$, if necessary, we may assume that $A_1$ has a zero row. Notice further that as ${\bf 0}\in S$ and $A_3=A_4$, the matrix $A_3$ also has a zero row. As a result, by Claim~4~(4), up to permuting rows and columns, the following three matrices are equal: $A_1$, $A_3$, and the incidence matrix of $\cocycle{PG(k-2,2)}$.

For the rest of the proof, we work with the projective geometry $PG(k-2,2)$ whose labeling agrees with the column labels of $A_3$, that is, the cocycles of the labeled $PG(k-2,2)$ are the rows of $A_3$.

\begin{claim} 
Up to permuting rows, $A_1$ and $A_3$ are equal.
\end{claim}
\begin{cproof}
This is obviously true if $k=2$. We may therefore assume that $k\geq 3$. It suffices to show that every row of $A_1$ is equal to some row of $A_3$, because the two matrices are already equal up to permuting rows and columns. Pick a row $\chi_D$ of $A_1$ for some $D\subseteq \{v_1,\ldots,v_{\frac{r-1}{2}}\}$. We need to show that $D$ is a cocycle of (the labeled) $PG(k-2,2)$. Pick a triangle $\{v_i,v_j,v_k\}$ of $PG(k-2,2)$, that is, the corresponding columns of $A_3$ add up to zero modulo $2$. Consider now the columns $v_i,v_j$ of $A$. By Claim~5, the sum of these two columns modulo $2$ is another column of $A$. This column is either $v_k$ or $u_k$, and in fact since $A_1$ has a zero row, it must be $v_k$. As a result, columns $v_i,v_j,v_k$ of $A_1$ also add up to zero modulo $2$, implying in turn that $|D\cap \{v_i,v_j,v_k\}|$ is even. Thus, $D$ intersects every triangle of $PG(k-2,2)$ an even number of times, so by \Cref{PG}~(iii), $D$ intersects every cycle of $PG(k-2,2)$ an even number of times, implying in turn that $D$ is a cocycle of $PG(k-2,2)$, as required.
\end{cproof}

We may therefore assume that $A_1=A_3$, implying in turn that $A_1=A_3=A_4$ and $A_2=J-A_1$. As $A_1$ is the incidence matrix of $\cocycle{PG(k-2,2)}$, it follows from \Cref{PG-recursive} that $A$ is the incidence matrix of $\cocycle{PG(k-1,2)}$, so $S=\cocycle{PG(k-1,2)}$. As $\mathcal{C}=\cuboid(S)$, and as $r=2^k-1$ by Claim~4~(3), we have finished the proof of \Cref{unique-lemma}.
\end{proof}

It is worth pointing out that the assumption $|\mathcal{C}|=r+1$ in \Cref{unique-lemma} can be removed without affecting the conclusion, but this comes at the expense of a much longer proof of Claim~4, parts (3) and (4), one that requires the notion of \emph{binary clutters}.

\subsection{Proof of \Cref{main-PG} $(\Rightarrow)$}\label{sec:unique-dyadic/main}

Let $\mathcal{C}$ be a clean tangled clutter over ground set $V$ whose setcore has a simplicial convex hull. By \Cref{unique-simplex}, $\mathcal{C}$ has a unique fractional packing $y$ of value two. We shall prove by induction on $|V|\geq 2$ that \begin{quote}
$(\star)$ there is an integer $k\geq 1$ such that $y$ is $\frac{1}{2^{k-1}}$-integral, $\rank{\mathcal{C}}=2^k-1$ and $\supp(y)$ is a duplication of $\cuboid(\cocycle{PG(k-1,2)})$.\end{quote} For the base case $|V|=2$, as $\mathcal{C}$ is tangled, it must consist of two members of size one each, so $(\star)$ holds for $k=1$. For the induction step, assume that $|V|\geq 3$. Let $r:=\rank{\mathcal{C}}$ and $S:=\setcore{\mathcal{C}}\subseteq \{0,1\}^r$. By \Cref{setcore}~(iii) and our assumption, $\conv(S)$ is a full-dimensional simplex, implying in turn that $|S|=r+1$. Let $G:=G(\mathcal{C})$, and for each $i\in [r]$, let $\{U_i,V_i\}$ be the bipartition of the $i\textsuperscript{th}$ connected component of~$G$. As $\supp(y)\subseteq \core{\mathcal{C}}$, \Cref{core->cuboid} implies Claim~1 below:

\setcounter{claim_nb}{0}

\begin{claim} 
For each $C\in \supp(y)$ and $i\in [r]$, $C\cap (U_i\cup V_i)$ is either $U_i$ or $V_i$.
\end{claim}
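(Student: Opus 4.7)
The plan is to derive Claim 1 directly from the definition of $\core{\mathcal{C}}$ together with \Cref{core->cuboid}; there is no real obstacle to overcome here, and the proof should be only a sentence or two.

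First I would observe that $y$ is a fractional packing of $\mathcal{C}$ of value two. By the very definition of the core, every member $C$ with $y_C > 0$ lies in $\core{\mathcal{C}}$, so $\supp(y) \subseteq \core{\mathcal{C}}$. This containment has already been noted in the discussion preceding the claim, so it requires no further justification.

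Next, I would invoke \Cref{core->cuboid}, which says that any member $C \in \core{\mathcal{C}}$ satisfies $C \cap (U_i \cup V_i) \in \{U_i, V_i\}$ for every $i \in [r]$. Applying this to each $C \in \supp(y) \subseteq \core{\mathcal{C}}$ yields exactly the conclusion of Claim~1.

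Since both ingredients, namely the containment $\supp(y) \subseteq \core{\mathcal{C}}$ and \Cref{core->cuboid}, are already in hand, the proof is essentially immediate; the main purpose of recording the claim is to have the cuboid-like structure of $\supp(y)$ available in a convenient form for the ensuing inductive argument that establishes $(\star)$.
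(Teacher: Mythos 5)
Your proof is correct and matches the paper exactly: the paper also derives Claim~1 by noting that $\supp(y)\subseteq \core{\mathcal{C}}$ (since $y$ is a fractional packing of value two) and then applying \Cref{core->cuboid}. Nothing further is needed.
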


\begin{claim} 
If $r=1$, then $(\star)$ holds for $k=1$.
\end{claim}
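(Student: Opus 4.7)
The plan is to invoke Theorem 2.7~(i) directly. Since $r=1$, that theorem gives $\setcore{\mathcal{C}}=\{0,1\}$ and $\core{\mathcal{C}}=\{U_1,V_1\}$, and by Theorem 2.1 every vertex of $G$ is incident with an edge, so $U_1\cup V_1=V$. Thus $\{U_1,V_1\}$ is already a partition of $V$ into two nonempty parts.

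Next I would pin down $y$ via complementary slackness. Since $\supp(y)\subseteq \core{\mathcal{C}}=\{U_1,V_1\}$, the vector $y$ is supported on at most two coordinates, $y_{U_1}$ and $y_{V_1}$, summing to $2$. For any element $w\in V$, Remark~2.6 (or equivalently Remark~2.2 applied to a minimum cover $\{u,v\}$ with $u\in U_1$, $v\in V_1$) gives $\sum(y_C:w\in C\in \mathcal{C})=1$; but $w$ lies in exactly one of $U_1,V_1$, so this sum equals $y_{U_1}$ or $y_{V_1}$. Hence $y_{U_1}=y_{V_1}=1$, so $y$ is $1$-integral and $\supp(y)=\{U_1,V_1\}$.

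Finally I would match this against the target: $\cocycle{PG(0,2)}=\{0,1\}$, so $\cuboid(\cocycle{PG(0,2)})$ is the two-element clutter on ground set $\{1,2\}$ with members $\{1\},\{2\}$, and any duplication of it is (by unwinding the duplication definition iteratively) exactly a clutter on some ground set whose two members form a partition. Therefore $\supp(y)$ is a duplication of $\cuboid(\cocycle{PG(0,2)})$. Combined with $r=1=2^1-1$ and $1$-integrality of $y$, this gives $(\star)$ with $k=1$.

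There is no real obstacle here; the step requiring the most care is only bookkeeping, namely confirming that duplicating the singleton members of $\cuboid(\{0,1\})$ produces exactly the partition clutter $\{U_1,V_1\}$ on $V$, which is immediate from the definition of duplication.
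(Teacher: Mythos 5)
Your proof is correct and follows essentially the same route as the paper: identify $\supp(y)=\{U_1,V_1\}$ (the paper gets this from its Claim~1 plus the value-two condition, you get it from \Cref{small-rank}~(i) and complementary slackness) and observe that this partition clutter is a duplication of $\cuboid(\cocycle{PG(0,2)})$ with $y={\bf 1}$. Your explicit verification that $y_{U_1}=y_{V_1}=1$ is a detail the paper leaves implicit in ``the claim follows,'' but it is the same argument.
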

\begin{cproof}
Assume that $r=1$. Then $\supp(y)\subseteq \{U_1,V_1\}$ by Claim~1, and as $\supp(y)$ contains a fractional packing of $\mathcal{C}$ of value two, we must have that $\supp(y)= \{U_1,V_1\}$, and the claim follows.
\end{cproof}

We may therefore assume that $r\geq 2$.

\begin{claim} 
The following statements hold:
\begin{enumerate}[(1)]
\item $|\supp(y)|=r+1$,
\item $\supp(y)$ has a unique fractional packing of value two,
\item the elements in each of $U_1,V_1,\ldots,U_r,V_r$ are duplicates in $\supp(y)$,
\item for each $i\in [r]$, if $u\in U_i$ and $v\in V_i$, then $\{u,v\}$ is a transversal of $\supp(y)$, and
\item for each $i\in [r]$, $\supp(y)\setminus U_i/V_i$ (resp. $\supp(y)/U_i\setminus V_i$) is a duplication of the cuboid of the cocycle space of a projective geometry.
\end{enumerate}
\end{claim}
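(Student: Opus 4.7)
The plan is to leverage the induction setup together with the simplex structure established so far. Before tackling the five items, the key observation I would establish is that $\supp(y) = \core{\mathcal{C}}$. Indeed, $\supp(y)$ is automatically a subset of $\core{\mathcal{C}}$ by definition. Conversely, by \Cref{setcore}~(ii) the fractional packing $y$ corresponds to a way of writing $\frac12\cdot\mathbf{1}$ as a convex combination of the points of $S$; since $\conv(S)$ is a simplex and (by \Cref{setcore}~(iii)) $\frac12\cdot\mathbf{1}$ lies in its \emph{interior}, every one of the $r+1$ barycentric coordinates is strictly positive. The construction of $S$ from $\core{\mathcal{C}}$ (see the paragraph preceding \Cref{setcore-defn}) gives a bijection between $\core{\mathcal{C}}$ and $S$, so every member of $\core{\mathcal{C}}$ carries positive $y$-weight. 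Thus $\supp(y)=\core{\mathcal{C}}$, and in particular $|\supp(y)|=|S|=r+1$, giving (1).

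With (1) in hand, (2), (3), (4) follow quickly from results already proved. For (2): any fractional packing $z$ of $\supp(y)$ of value two extends (by zeros) to a fractional packing of $\mathcal{C}$ of value two, which by uniqueness of $y$ must be $y$ itself, so $z$ is the restriction of $y$ to $\supp(y)$ and is unique. For (3): since $\supp(y)=\core{\mathcal{C}}$, \Cref{setcore-unique} directly states that elements lying in the same part $U_i$ (or $V_i$) of a bipartition are duplicates in $\supp(y)$. For (4): the pair $\{u,v\}$ with $u\in U_i$, $v\in V_i$ is an edge of $G(\mathcal{C})$, hence a minimum cover of $\mathcal{C}$; by \Cref{CS-conditions-2}, every member of $\core{\mathcal{C}}=\supp(y)$ is a transversal of this minimum cover.

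For (5), I would appeal to the induction hypothesis. Fix $i\in[r]$, and let $\mathcal{C}_i:=\mathcal{C}\setminus U_i/V_i$. By \Cref{bipartite}, $\mathcal{C}_i$ is a clean tangled clutter, and by \Cref{fp-recursive-2}, $\mathcal{C}_i$ has a unique fractional packing $z_i$ of value two with $\supp(z_i)=\supp(y)\setminus U_i/V_i$. Then \Cref{unique-simplex} shows that $\conv(\setcore{\mathcal{C}_i})$ is a simplex. Since $r\geq 2$, there is some other component $(U_j,V_j)$ with $j\neq i$, so the ground set of $\mathcal{C}_i$ has at least two elements (and strictly fewer than $|V|$). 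The induction hypothesis $(\star)$ therefore applies to $\mathcal{C}_i$, yielding an integer $k_i\geq 1$ such that $\supp(z_i)=\supp(y)\setminus U_i/V_i$ is a duplication of $\cuboid(\cocycle{PG(k_i-1,2)})$. The same argument applied to $\mathcal{C}/U_i\setminus V_i$ handles the other case, completing (5).

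I do not anticipate any real obstacle in this claim; the proof is essentially bookkeeping once one recognizes the pivotal identity $\supp(y)=\core{\mathcal{C}}$. The slightly subtle points are verifying that the induction is actually applicable (which requires $r\geq 2$, already assumed) and correctly invoking \Cref{fp-recursive-2} to transfer uniqueness of the fractional packing down to the minor.
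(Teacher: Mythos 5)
Your proposal follows essentially the same route as the paper: identify $\supp(y)$ with $\core{\mathcal{C}}$, count $|S|=r+1$ from the simplex, and obtain item (5) from \Cref{fp-recursive-2} together with the induction hypothesis. (The paper gets $\supp(y)=\core{\mathcal{C}}$ even more directly: the core is by definition the union of the supports of all value-two fractional packings, and by uniqueness $y$ is the only one.) The one step whose justification is wrong is item (4): you assert that for arbitrary $u\in U_i$ and $v\in V_i$ the pair $\{u,v\}$ is an edge of $G(\mathcal{C})$, hence a minimum cover of $\mathcal{C}$. The $i$-th component of $G(\mathcal{C})$ is a connected bipartite graph with bipartition $\{U_i,V_i\}$, not necessarily a complete bipartite one, so $\{u,v\}$ need not be a minimum cover of $\mathcal{C}$ --- indeed it need not even be a cover of $\mathcal{C}$, since members outside the core may avoid both $u$ and $v$ --- and therefore \Cref{CS-conditions-2} does not apply to it. The intended conclusion is nonetheless immediate from \Cref{core->cuboid}, which you already rely on when constructing the bijection between $\core{\mathcal{C}}$ and $S$: every $C\in\supp(y)\subseteq\core{\mathcal{C}}$ satisfies $C\cap(U_i\cup V_i)\in\{U_i,V_i\}$ and hence contains exactly one of $u,v$. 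With that one-line repair the argument is correct and matches the paper's.
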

\begin{cproof}
{\bf (1)} Since $y$ is the unique fractional packing of $\mathcal{C}$ of value two, we have $\core{\mathcal{C}} = \supp(y)$. Subsequently, $|\supp(y)| = |\core{\mathcal{C}}| = |S| = r+1$.
{\bf (2)} is obvious, and {\bf (3)} and {\bf (4)} follow from Claim~1. {\bf (5)} By \Cref{fp-recursive-2}, the minor $\mathcal{C}\setminus U_i/V_i$ is a clean tangled clutter with a unique fractional packing $z$ of value two, and $\supp(z) = \supp(y)\setminus U_i/V_i$. Our induction hypothesis applied to $\mathcal{C}\setminus U_i/V_i$ implies that $\supp(z)$, which is equal to $\supp(y)\setminus U_i/V_i$, is a duplication of the cuboid of the cocycle space of a projective geometry, as required. 
\end{cproof}

We may therefore apply \Cref{unique-lemma} to $\supp(y)$ to conclude that for some integer $k\geq 2$, $r=2^{k}-1$ and $\supp(y)$ is a duplication of $\cuboid(\cocycle{PG(k-1,2)})$. It follows from \Cref{PG-simplex} that $y$ assigns $\frac{1}{2^{k-1}}$ to the members of $\supp(y)$, so $(\star)$ holds. This completes the induction step. 

We have shown that $(\star)$ holds. As a consequence, $\core{\mathcal{C}}=\supp(y)$ is a duplication of the cuboid of $\cocycle{PG(k-1,2)}\subseteq \{0,1\}^r$. The uniqueness of the setcore (\Cref{setcore}~(i)) implies that $\setcore{\mathcal{C}}=\cocycle{PG(k-1,2)}$, thereby finishing the proof of \Cref{main-PG}~$(\Rightarrow)$.
\qed

\subsection{Binary clutters and an application}\label{sec:unique-dyadic/app}

A clutter $\mathcal{C}$ is \emph{binary} if the symmetric difference of any three members contains a member~\cite{Lehman64}. Observe that if a clutter is binary, then so is every duplication of it. It is known that $\mathcal{C}$ is a binary clutter if, and only if, $|C\cap B|\equiv 1 \pmod{2}$ for all $C\in \mathcal{C},B\in b(\mathcal{C})$~\cite{Lehman64}. In particular, a clutter is binary if and only if its blocker is binary. Observe that the deltas, extended odd holes and their blockers are not binary. If a clutter is binary, so is every minor of it~\cite{Seymour76}. Subsequently, 
 
\begin{RE}\label{binary->clean}
Every binary clutter is clean.
\end{RE}

Examples include the clutter of minimal $T$-joins of a graft, and the clutter of odd circuits of a signed graph (the ground set in each case is the edge set of the underlying graph)~\cite{Cornuejols01}. Another class of binary clutters comes from affine binary spaces. 

\begin{RE}\label{binaryspace-binaryclutter}
Take an integer $n\geq 1$ and a set $S\subseteq \{0,1\}^n$. Then $S$ is an affine binary space if, and only if, $\cuboid(S)$ is a binary clutter.
\end{RE}

We are now ready to prove the following appealing consequence of \Cref{main-PG}:

\begin{theorem}\label{PG-simplex-3}
Take an integer $n\geq 1$ and a set $S\subseteq \{0,1\}^n$ whose convex hull is a simplex containing $\frac12\cdot {\bf 1}$ in its relative interior. Then exactly one of the following statements holds: \begin{itemize}
\item $\cuboid(S)$ has a delta or the blocker of an extended odd hole minor, or
\item $S$ is a duplication of the cocycle space of a projective geometry over the two-element field.
\end{itemize} 
\end{theorem}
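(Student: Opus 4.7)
The plan is to apply \Cref{main-PG}~$(\Rightarrow)$ to $\mathcal{C}:=\cuboid(S)$, after first arguing that the two alternatives are mutually exclusive. For the latter, if $S$ is a duplication of $\cocycle{PG(k-1,2)}$, then $\cuboid(S)$ is a duplication of $\cuboid(\cocycle{PG(k-1,2)})$; the latter is binary by \Cref{binaryspace-binaryclutter} (since $\cocycle{PG(k-1,2)}$ is an affine binary space), and duplications of binary clutters remain binary, so \Cref{binary->clean} makes $\cuboid(S)$ clean. This rules out the first alternative.

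For the existence direction I would assume $\mathcal{C}$ is clean and argue $\mathcal{C}$ is tangled with a unique fractional packing of value two. Tangledness: the pairs $\{2i-1,2i\}$ are covers of size two, so $\tau(\mathcal{C})\leq 2$; by \Cref{fp-cc}, $\frac12\cdot\mathbf{1}\in\conv(S)$ gives a fractional packing of value two, so $\tau(\mathcal{C})\geq 2$; and every element lies in one such minimum cover. Uniqueness: every $p\in S\subseteq\{0,1\}^n$ is an extreme point of $[0,1]^n$, hence of $\conv(S)$, so $S$ is exactly the vertex set of the simplex $\conv(S)$, and a point in the relative interior of a simplex has a unique convex representation in terms of its vertices, so $\frac12\cdot\mathbf{1}$ has a unique representation as a convex combination of $S$. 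By \Cref{fp-cc} this yields the unique fractional packing. Then \Cref{unique-simplex} gives $\conv(\setcore{\mathcal{C}})$ simplicial, and \Cref{main-PG}~$(\Rightarrow)$ gives $\setcore{\mathcal{C}}\cong\cocycle{PG(k-1,2)}$ for some $k\geq 1$.

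To close, the coefficients of the unique convex combination are all positive (relative interior), so the unique fractional packing has full support on $\mathcal{C}$ and $\core{\mathcal{C}}=\mathcal{C}$. By \Cref{setcore}~(i), $\cuboid(S)=\core{\mathcal{C}}$ is then a duplication of $\cuboid(\cocycle{PG(k-1,2)})$, and reading this clutter-level duplication back to its underlying set (using the observation from \S1.1 that duplicated coordinates in a set correspond to duplicated elements in its cuboid, paired consistently across the transversals $\{2i-1,2i\}$) yields $S$ as a duplication of $\cocycle{PG(k-1,2)}$. The main obstacle I anticipate is precisely this final translation from clutter duplication to set duplication: one must check that when a pair of duplicated elements in $\cuboid(S)$ lies in distinct cuboid pairs, the corresponding coordinates of $S$ become duplicates (equal or complementary), but this should follow routinely from the transversal identity $|C\cap\{2i-1,2i\}|=1$ together with the uniqueness clause in \Cref{setcore}~(i).
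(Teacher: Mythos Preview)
Your proposal is correct and follows essentially the same approach as the paper's proof: establish mutual exclusivity via binariness, then assuming $\mathcal{C}=\cuboid(S)$ is clean, show it is tangled with a unique fractional packing of value two (using \Cref{fp-cc}), deduce $\core{\mathcal{C}}=\mathcal{C}$, and apply \Cref{main-PG}. The only cosmetic difference is the order of steps: the paper first argues $\mathcal{C}=\core{\mathcal{C}}$, hence $S$ is a duplication of $\setcore{\mathcal{C}}$, and infers directly that $\conv(\setcore{\mathcal{C}})$ is a simplex (as a projection of the simplex $\conv(S)$ obtained by dropping duplicate coordinates); you instead route through \Cref{unique-simplex} and only afterwards use $\core{\mathcal{C}}=\mathcal{C}$ to pass from $\setcore{\mathcal{C}}$ back to $S$. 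The ``obstacle'' you flag at the end---translating clutter duplication to set duplication---is handled implicitly in the paper and is indeed routine: by \Cref{setcore-unique}, duplicate elements of $\core{\mathcal{C}}=\cuboid(S)$ are exactly those in the same bipartition part of a component of $G(\mathcal{C})$, and since each edge $\{2i-1,2i\}$ lies in $G(\mathcal{C})$, duplicate elements correspond precisely to duplicate (equal or complementary) coordinates of $S$.
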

\begin{proof}
Let $\mathcal{C}:=\cuboid(S)$. If $S$ is a duplication of the cocycle space of a projective geometry, then up to twisting, $S$ is a binary space, so $\mathcal{C}$ is a binary clutter by \Cref{binaryspace-binaryclutter}, implying in turn that it is clean by \Cref{binary->clean}. Conversely, assume that $\mathcal{C}$ is clean. As $\conv(S)$ is a simplex containing $\frac12\cdot {\bf 1}$ in its relative interior, \begin{itemize}
\item the points in $S$ do not all agree on a coordinate, so $\mathcal{C}$ is tangled, and
\item by \Cref{fp-cc} on the connection between $\conv(S)$ and fractional packings of $\mathcal{C}$, $\mathcal{C}$ must have a unique fractional packing of value two, one whose support is $\mathcal{C}$.
\end{itemize} In particular, $\mathcal{C}=\core{\mathcal{C}}$, so $S$ is a duplication of $\setcore{\mathcal{C}}$. As $\conv(S)$ is a simplex, so is $\conv(\setcore{\mathcal{C}})$, so by \Cref{main-PG}, $\setcore{\mathcal{C}}$ is isomorphic to the cocycle space of a projective geometry, implying in turn that $S$ is a duplication of the cocycle space of a projective geometry, as required.
\end{proof}

\section{Finding the Fano plane as a minor}\label{sec:fano}

In this section, after presenting a few ingredients, we prove \Cref{main-L7}, and then prove a consequence of the result. 

\subsection{Monochromatic covers in clean tangled clutters}\label{sec:fano/mono}

Let $\mathcal{C}$ be a clean tangled clutter. A cover is \emph{monochromatic} if it is monochromatic in some (proper) bicoloring of the bipartite graph $G(\mathcal{C})$. In this subsection, we prove a lemma on monochromatic minimal covers in clean tangled clutters. We need the following result from Mathematical Logic:

\begin{PR}[\cite{Robinson65}]\label{resolution}
Take an integer $r\geq 1$ and a set $S\subseteq \{0,1\}^r$. Pick disjoint subsets $I,J\subseteq [r]$ and disjoint subsets $I',J'\subseteq [r]$ such that the following inequalities are valid $S$: \begin{align*}
\sum_{i\in I}x_i+\sum_{j\in J}(1-x_j) &\geq 1\\
\sum_{i\in I'}x_i+\sum_{j\in J'}(1-x_j) &\geq 1
\end{align*} If $k\in I\cap J'$, then the following inequality is also valid for $S$: $$
\sum_{i\in (I\cup I')-\{k\}}x_i+\sum_{j\in (J\cup J')-\{k\}}(1-x_j) \geq 1.
$$
\end{PR}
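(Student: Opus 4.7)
The plan is to recognise the statement as the classical resolution rule of propositional logic: if we think of each variable $x_i$ as a Boolean literal and each coordinate-indicator $(1-x_j)$ as its negation, the two hypothesis inequalities assert, for every $x\in S$, the clauses
\[
\bigvee_{i\in I} x_i \;\lor\; \bigvee_{j\in J}\neg x_j \quad\text{and}\quad \bigvee_{i\in I'} x_i \;\lor\; \bigvee_{j\in J'}\neg x_j,
\]
and the first contains the literal $x_k$ while the second contains $\neg x_k$. The desired inequality is exactly their resolvent on $x_k$, so all that is needed is to verify soundness of resolution inside $\{0,1\}^r$.

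Before the case split I would do a small bookkeeping step: since $I\cap J=\emptyset$ and $k\in I$ we have $k\notin J$, and since $I'\cap J'=\emptyset$ and $k\in J'$ we have $k\notin I'$. Hence $(I\cup I')-\{k\}=(I-\{k\})\cup I'$ and $(J\cup J')-\{k\}=J\cup(J'-\{k\})$, and every term appearing in the target sum is a $\{0,1\}$-valued quantity, so the sum is monotone in the set of terms included.

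Now fix an arbitrary $x\in S$ and split on $x_k$. If $x_k=1$, then the contribution $(1-x_k)$ of $k$ to the second hypothesis inequality vanishes, so that inequality forces
\[
\sum_{i\in I'} x_i+\sum_{j\in J'-\{k\}}(1-x_j)\geq 1.
\]
The left side is a subset of the terms in the target sum, all of which are nonnegative, so the target inequality holds. If instead $x_k=0$, then $x_k$ contributes $0$ to the first hypothesis inequality and a symmetric argument using
\[
\sum_{i\in I-\{k\}} x_i+\sum_{j\in J}(1-x_j)\geq 1
\]
gives the conclusion.

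There is no serious obstacle here: the only thing that requires any care is the index-set accounting that ensures $k\notin J$ and $k\notin I'$, after which the two-case argument on the value of $x_k$ is essentially immediate from nonnegativity of the remaining $\{0,1\}$-terms.
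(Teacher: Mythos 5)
Your proof is correct: the bookkeeping that $k\notin J$ and $k\notin I'$ followed by the case split on $x_k$, using nonnegativity of the remaining $\{0,1\}$-valued terms, is exactly the standard soundness argument for resolution. The paper leaves this proposition as an exercise for the reader, so your write-up simply supplies the intended argument.
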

\begin{proof}
We leave the proof as an exercise for the reader.
\end{proof}

\Cref{resolution} is known as the \emph{Resolution Principle} and the derived inequality is referred to as the \emph{resolvent} of the other two inequalities. We use this remark to prove the following, a key ingredient needed for the proof of \Cref{main-L7}.

\begin{theorem}\label{mono}
Let $\mathcal{C}$ be a clean tangled clutter over ground set $V$ of rank $r$, and for each $i\in [r]$, denote by $\{U_i,V_i\}$ the bipartition of the $i\textsuperscript{th}$ connected component of $G:=G(\mathcal{C})$. Suppose for some integer $k\in [r]$ that $V_1\cup \cdots \cup V_k$ is a cover of $\mathcal{C}$. Then $k\geq 3$. Moreover, if $k=3$, then $V_1\cup V_2\cup V_3$ contains a minimal cover of cardinality three picking exactly one element from each $V_i,i\in [3]$.
\end{theorem}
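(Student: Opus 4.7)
For the first assertion $k\geq 3$, my plan is to translate the hypothesis into a linear inequality on the setcore $S:=\setcore{\mathcal{C}}\subseteq \{0,1\}^r$ and then invoke the interior property from \Cref{setcore}(iii). Since $V_1\cup\cdots\cup V_k$ is a cover of $\mathcal{C}$, it also covers $\core{\mathcal{C}}$, and under the identification of $\core{\mathcal{C}}$ with a duplication of $\cuboid(S)$ (where each $V_i$ corresponds, up to duplication, to the element indexed by $p_i=1$), this translates precisely to the validity of $\sum_{i=1}^k x_i\geq 1$ over $S$. The point $\tfrac12\cdot\mathbf{1}$ lies in the interior of $\conv(S)$ by \Cref{setcore}(iii), so it must satisfy every valid inequality strictly; hence $k/2>1$, forcing $k\geq 3$.

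For the ``moreover'' clause, I first observe that minimality is automatic once we have a covering triple $\{v_1,v_2,v_3\}$ with $v_i\in V_i$: any $2$-subset $\{v_i,v_j\}$ that were a cover would be a minimum cover, hence an edge of $G(\mathcal{C})$, but $v_i$ and $v_j$ lie in distinct connected components of $G(\mathcal{C})$, making this impossible. Thus the entire task reduces to producing at least one covering triple.

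For existence, my plan is an argument by contradiction via repeated applications of the Resolution Principle (\Cref{resolution}). Assume no such triple is a cover of $\mathcal{C}$. Working in $\{0,1\}^V$ on the incidence vectors of members, we begin with the cover inequality $\sum_{v\in V_1\cup V_2\cup V_3} x_v\geq 1$. To enable Resolution, which requires a mix of positive and negated literals, I would exploit two auxiliary families of valid inequalities: the minimum-cover inequalities $x_u+x_v\geq 1$ for edges $\{u,v\}$ of $G(\mathcal{C})$, and the \emph{non-containment inequalities} $\sum_{v\in C_0\cup\{w\}}(1-x_v)\geq 1$ valid for every member $C_0\in\mathcal{C}$ and every element $w\notin C_0$ (since the clutter axiom forbids any member of $\mathcal{C}$ from strictly containing $C_0$). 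By iteratively resolving against these, I would eliminate the elements of $V_i\setminus\{v_i\}$ for each $i\in[3]$ from the cover inequality, ending at an inequality of the form $x_{v_1}+x_{v_2}+x_{v_3}\geq 1$ with $v_i\in V_i$, contradicting the assumption.

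The main obstacle I anticipate is the bookkeeping of the resolution steps: choosing certificate members $C_0$ (supplied by the covering hypothesis together with the bipartite structure from \Cref{bipartite}, so that every vertex of $V_i$ is incident with an edge, providing enough members with prescribed intersections) and ordering the steps so that the negated literals introduced at intermediate stages are systematically cancelled, leaving a purely positive-literal inequality on $\{v_1,v_2,v_3\}$ at the end. Cleanness of $\mathcal{C}$ should play a role by preventing pathological non-core members (which would otherwise obstruct cancellation), ensuring that the reduction terminates cleanly on a triple chosen with one element from each $V_i$.
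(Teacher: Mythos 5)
Your first assertion ($k\geq 3$) is proved exactly as in the paper: translate the cover into the valid inequality $x_1+\cdots+x_k\geq 1$ for $\conv(\setcore{\mathcal{C}})$ and invoke \Cref{setcore}~(iii). Your observation that minimality of a covering triple $\{v_1,v_2,v_3\}$ with $v_i\in V_i$ is automatic (a two-element sub-cover would be a minimum cover, hence an edge of $G(\mathcal{C})$ joining distinct components) is also correct, and is a small simplification of the paper, which instead secures minimality by taking a minimum-cardinality cover inside $V_1\cup V_2\cup V_3$.

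The existence of the covering triple, however, is the entire content of the ``moreover'' clause, and there your proposal is only a plan whose central difficulty you yourself flag as unresolved. I do not believe the plan closes as stated. Each resolution of the all-positive cover inequality against a non-containment inequality $\sum_{v\in C_0\cup\{w\}}(1-x_v)\geq 1$ removes one positive literal $x_w$ but imports the negated literals of an entire member $C_0$; cancelling each of those via an edge inequality $x_u+x_v\geq 1$ imports a fresh positive literal. You give no termination or control argument, no rule for choosing $C_0$ or for which $v_1,v_2,v_3$ the process is supposed to converge to, and the inequality could just as well grow as shrink. More fundamentally, your existence argument never actually uses cleanness: the non-containment inequalities encode only the clutter axiom, and ``cleanness should play a role by preventing pathological non-core members'' is a hope, not a step. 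Cleanness is essential here, and the paper uses it concretely: given a minimum-cardinality cover $B\subseteq V_1\cup V_2\cup V_3$ with, say, $|B\cap V_3|\geq 2$, it passes to the minor $\mathcal{C}'=\mathcal{C}\setminus I/J$ with $I=B-V_3$ and $J=V-(I\cup U_3\cup V_3)$, and splits on $\tau(\mathcal{C}')$. When $\tau(\mathcal{C}')=2$, cleanness of the minor together with \Cref{bipartite} and \Cref{small-rank}~(i) forces $\core{\mathcal{C}'}=\{U_3,V_3\}$, contradicting that $B\cap V_3$ covers $\mathcal{C}'$; when $\tau(\mathcal{C}')\leq 1$, one obtains a cover $I\cup\{u\}$ with $u\in U_3\cup V_3$, and either a single application of \Cref{resolution} plus \Cref{setcore}~(iii) (if $u\in U_3$) or the minimality of $B$ (if $u\in V_3$) yields a contradiction. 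You should replace your open-ended resolution scheme with an argument of this kind; as it stands, the proof is incomplete.
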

\begin{proof}
Let $S:=\setcore{\mathcal{C}:U_1,V_1;U_2,V_2;\ldots;U_r,V_r}$. Since $V_1\cup \cdots \cup V_k$ is a cover of $\mathcal{C}$, it is also a cover of $\core{\mathcal{C}}$, so every member of $\core{\mathcal{C}}$ contains at least one of $V_1,\ldots,V_k$, by \Cref{core->cuboid}. In particular, the inequality $x_1+\cdots+x_k\geq 1$ is valid for $\conv(S)$. As $\frac12\cdot {\bf 1}$ lies in the interior of $\conv(S)$ by \Cref{setcore}~(iii), it follows that $k\geq 3$.

Assume that $k=3$. Let $B$ be a minimum cardinality cover contained in $V_1\cup V_2\cup V_3$. What we just showed implies that $B\cap V_i\neq \emptyset$ for $i\in [3]$. We claim that $|B\cap V_i|=1$ for each $i\in [3]$, thereby finishing the proof. Suppose otherwise. We may assume that $|B\cap V_3|\geq 2$. Let $I:=B-V_3$, $J:=V-(I\cup U_3\cup V_3)$, and $\mathcal{C}':=\mathcal{C}\setminus I/J$. Note that $\mathcal{C}'$ is a clean clutter and has ground set $U_3\cup V_3$. Notice further that every edge of $G[U_3\cup V_3]$ gives a cardinality-two cover of $\mathcal{C}'$. \begin{enumerate}
\item[Case 1:] $\tau(\mathcal{C}')= 2$. In this case, $\mathcal{C}'$ is a tangled clutter where $G[U_3\cup V_3]\subseteq G(\mathcal{C}')$. In particular, $G(\mathcal{C}')$ is a connected bipartite graph whose bipartition inevitably is $\{U_3,V_3\}$. Thus, $\rank{\mathcal{C}'}=1$, so $\core{\mathcal{C}'} = \{U_3,V_3\}$ by \Cref{small-rank}~(i). However, as $B$ is a cover of $\mathcal{C}$, $B-I=B\cap V_3$ is a cover of $\mathcal{C}'$, a contradiction as $B\cap V_3$ is disjoint from $U_3\in \core{\mathcal{C}'}\subseteq \mathcal{C}'$.

\item[Case 2:] $\tau(\mathcal{C}') \leq 1$. That is, there is a minimal cover $D$ of $\mathcal{C}$ such that $D\cap J=\emptyset$ and $|D-I|\leq 1$. As $D\cap I\subseteq I=B-V_3\subsetneq B$, and $B$ is a minimal cover of $\mathcal{C}$, it follows that $D\cap I$ is not a cover of $\mathcal{C}$, so $D-I\neq \emptyset$. Thus, $|D-I|=1$. Let $u$ be the element in $D-I\subseteq U_3\cup V_3$.

\begin{enumerate}
\item[Case 2.1:] $u\in U_3$. In this case, $V_1\cup V_2\cup U_3$ is a cover of $\mathcal{C}$, implying that the inequality $x_1+x_2+(1-x_3)\geq 1$ is valid for $S$. However, $V_1\cup V_2\cup V_3$ is also a cover of $\mathcal{C}$, so $x_1+x_2+x_3\geq 1$ is valid for $S$, too. By applying the Resolution Principle, \Cref{resolution}, we get that $x_1+x_2\geq 1$ is also valid for $S$. However, $\frac12\cdot {\bf 1}$ lies in the interior of $\conv(S)$ by \Cref{setcore}~(iii), a contradiction.

\item[Case 2.2:] $u\in V_3$. In this case, $$|D| = |D\cap I| + |D-I| = |B-V_3|+1<|B-V_3|+|B\cap V_3|=|B|,$$ where the strict inequality follows from our contrary assumption that $|B\cap V_3|\geq 2$. However, $|D|<|B|$ contradicts our minimal choice of $B$ as the minimum cover of $\mathcal{C}$ contained in $V_1\cup V_2\cup V_3$.
\end{enumerate}
\end{enumerate}

We obtained a contradiction in each case, as desired.
\end{proof}

\subsection{A lemma for finding an $\mathbb{L}_7$ minor}\label{sec:fano/lemma}

Recall that $$\mathbb{L}_7=\{\{1,2,3\},\{1,4,5\},\{1,6,7\},\{2,5,6\},\{2,4,7\},\{3,4,6\},\{3,5,7\}\}$$ and $b(\mathbb{L}_7)=\mathbb{L}_7$. This clutter enjoys a lot of symmetries. $\mathbb{L}_7$ has an automorphism mapping every element to every other element, and an automorphism mapping every member to every other member. These facts are crucial throughout this subsection.

\begin{RE}\label{trees}
Let $G=(V,E)$ be a connected, bipartite graph with bipartition $\{U,U'\}$ where $U,U'\neq \emptyset$. Assume that there exists a subset $X\subseteq U'$ such that $2\leq |X|\leq 3$, and there is no proper vertex-induced subgraph that is connected and contains $X$. Then $G$ is a tree whose leaves are in $X$.
\end{RE}
\begin{proof}
By our minimality assumption, every vertex in $V-X$ is a cut-vertex of $G$ separating at least two vertices in $X$. We claim that $G$ is a tree. Suppose otherwise. Then there is a circuit $C\subseteq V$. For every vertex $v\in C$, there is a vertex $g(v)\in X$ such that \begin{itemize}
\item if $v\in X$, then $g(v)=v$,
\item otherwise, $g(v)$ is a vertex of $X$ such that every path between it and $C-\{v\}$ includes $v$.
\end{itemize} 
Notice that if $v,v'$ are distinct vertices of $C$, then $g(v)\neq g(v')$. In particular, $|X|\geq |C|$, implying in turn that $|C|=3$, a contradiction as $G$ is bipartite. Thus $G$ is a tree. It is immediate from our minimality assumption that every leaf of $G$ belongs to $X$.
\end{proof}

We are now ready to prove the following lemma, the workhorse for the proof of \Cref{main-L7}:

\begin{LE}\label{L7-minor}
Let $\mathcal{C}$ be a clean tangled clutter, where the following statements hold:\begin{enumerate}[(a)]
\item $\mathcal{C}$ has rank $7$, and for each $i\in [7]$, the $i\textsuperscript{th}$ connected component of $G(\mathcal{C})$ has bipartition $\{U_i,V_i\}$.
\item For each $L\in \mathbb{L}_7$, $\bigcup_{i\notin L} U_i \cup \bigcup_{j\in L} V_j$ contains a member of $\mathcal{C}$. 
\item For all $L\in \mathbb{L}_7$ but at most one, $\bigcup_{j\in L} V_j$ is a cover of $\mathcal{C}$.
\end{enumerate}
Then $\mathcal{C}$ has an $\mathbb{L}_7$ minor.
\end{LE}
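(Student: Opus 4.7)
The plan is to construct the $\mathbb{L}_7$ minor by contracting each component $W_i := U_i \cup V_i$ of $G(\mathcal{C})$ down to a single representative $v_i \in V_i$. A preliminary observation sharpens hypothesis (b): if $C' \in \mathcal{C}$ lies inside $\bigcup_{i \notin L} U_i \cup \bigcup_{j \in L} V_j$, then $C'$ must intersect every minimum cover (edge of $G(\mathcal{C})$), and since every vertex of $G(\mathcal{C})$ is incident to an edge by \Cref{bipartite}, this forces $V_j \subseteq C'$ for $j \in L$ and $U_i \subseteq C'$ for $i \notin L$. Hence $C_L := \bigcup_{i \notin L} U_i \cup \bigcup_{j \in L} V_j$ is itself a member of $\mathcal{C}$ for every $L \in \mathbb{L}_7$.

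The key reduction is as follows. Given any choice of representatives $v_i \in V_i$, form the minor $\mathcal{C}'' := \mathcal{C}/\bigcup_i(U_i \cup (V_i - \{v_i\}))$ on ground set $\{v_1, \ldots, v_7\}$. Each $C_L$ projects to $\{v_j : j \in L\}$, so all seven Fano lines appear in the projected collection. Since $\mathbb{L}_7$ is self-blocking, a short calculation shows that $\mathcal{C}'' \cong \mathbb{L}_7$ if and only if $\{v_j : j \in L\}$ is a cover of $\mathcal{C}$ for every $L \in \mathbb{L}_7$ — equivalently, the projection of every member of $\mathcal{C}$ onto $\{v_1, \ldots, v_7\}$ contains some Fano line. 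The lemma therefore reduces to a selection problem.

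For each of the (at least six) good lines $L$, hypothesis (c) together with \Cref{mono} supplies a minimal cover $B_L = \{w_L^j : j \in L\}$ of $\mathcal{C}$ with $w_L^j \in V_j$. Set $X_i := \{w_L^i : L \text{ good}, i \in L\} \subseteq V_i$, so $|X_i| \leq 3$. Applying \Cref{trees} to the $i$-th component $G(\mathcal{C})[W_i]$: a minimal vertex-induced connected subgraph containing $X_i$ is a tree whose leaves lie in $X_i$. I would use this tree structure, together with the fact (\Cref{setcore-unique}) that elements within the same $V_i$ are duplicates in $\core{\mathcal{C}}$, to further contract inside each component, funneling the various $w_L^i$ (for different lines $L \ni i$) into a single representative $v_i$, and thereby witnessing every good line as a cover simultaneously.

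The main obstacle is the at-most-one bad line $L_0$. Any rogue member $C_0 \in \mathcal{C}$ disjoint from $\bigcup_{j \in L_0} V_j$ must, by the same edge-intersection argument, contain $U_j$ for each $j \in L_0$; its projection onto $\{v_1, \ldots, v_7\}$ then lies in the four-element set $\{v_i : i \notin L_0\}$, which contains no Fano line, and threatens to produce a spurious minimal member of $\mathcal{C}''$. Eliminating this projection without destroying any of the seven intended lines — via an additional contraction inside some $W_i$ for $i \notin L_0$ (again guided by \Cref{trees}), or via a sharper choice of representatives — is the delicate combinatorial heart of the proof, and I expect it to be the main hurdle.
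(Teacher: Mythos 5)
Your proposal correctly identifies the target (contract each component to a single representative and recognize $\mathbb{L}_7$ via self-blocking), the right tools (\Cref{mono}, \Cref{trees}), and even the two genuine difficulties --- but it resolves neither of them, and they are the entire substance of the proof. First, the ``bad line'' $L_0$: no ``sharper choice of representatives'' can work here, because if $\bigcup_{j\in L_0}V_j$ is not a cover then some member of $\mathcal{C}$ is disjoint from \emph{all} of it, hence from $\{v_j: j\in L_0\}$ for \emph{every} choice of $v_j\in V_j$; the obstruction lives in $\mathcal{C}$ itself and must be killed before any selection is made. The paper does exactly this (Claim~2 of its proof): it shows that hypothesis (c) self-improves to ``$\bigcup_{j\in L}V_j$ is a cover for \emph{all} $L\in\mathbb{L}_7$,'' via a minor argument (delete $V_5\cup V_7$, contract $U_5\cup U_7$, split on the covering number of the resulting minor, and use \Cref{small-rank} together with hypothesis (b) to reach a contradiction in each case). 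This is the longest and most delicate claim in the proof and is entirely absent from your outline.

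Second, the ``funneling'': your one-shot contraction $\mathcal{C}/\bigcup_i(U_i\cup(V_i-\{v_i\}))$ does not preserve the covers $B_L$, since $b(\mathcal{C}/I)=b(\mathcal{C})\setminus I$ means $B_L$ survives as a cover only if it is disjoint from the contracted set, i.e.\ only if $w_L^i=v_i$ for all $i\in L$; and \Cref{setcore-unique} is of no help, because two elements of $V_i$ being duplicates in $\core{\mathcal{C}}$ says nothing about their interchangeability in minimal covers of $\mathcal{C}$. The paper's mechanism is different and is where the odd phrasing of (c) earns its keep: take $\mathcal{C}$ contraction-minimal subject to being tangled and satisfying (a)--(c), use \Cref{trees} to see each component is a tree with leaves in $X_i$, then contract a single leaf that lies in \emph{exactly one} of the three $B_L$ through that component --- this destroys at most one of the seven covers, so (c) is preserved and minimality forces every component to be a single edge. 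Only then is the contraction to representatives trivial, and a final short argument (ruling out a spurious cardinality-four minimal cover $[7]-L$ using (b)) gives $b(\mathcal{C}')=\mathbb{L}_7$. As written, your proposal is a correct problem analysis rather than a proof.
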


\noindent (In (b), $\bigcup_{i\notin L} U_i \cup \bigcup_{j\in L} V_j$ for each $L\in \mathbb{L}_7$ must in fact be a member by \Cref{core->cuboid}; but the proof is easier to read given the current version of (b).)

\begin{proof}
Let $G:=G(\mathcal{C})$.

\begin{claim} 
Take a subset $L\subseteq [7]$ such that $|L|\leq 3$ and $\bigcup_{i\in L}V_i$ is a cover. Then $L\in \mathbb{L}_7$. Moreover, $\bigcup_{i\in L}V_i$ contains a minimal cover of cardinality three picking one element from each $V_i,i\in L$.
\end{claim}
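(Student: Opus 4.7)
The plan is to combine \Cref{mono}, assumption (b), and the identical self-blocking identity $b(\mathbb{L}_7)=\mathbb{L}_7$ to force $L$ to be a line of the Fano plane.

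First, I would apply \Cref{mono} to the cover $\bigcup_{i\in L}V_i$ (after relabeling the connected components so that those indexed by $L$ come first). Since $|L|\leq 3$, the theorem forces $|L|\geq 3$, hence $|L|=3$, and furthermore produces a minimal cover of $\mathcal{C}$ of cardinality three contained in $\bigcup_{i\in L}V_i$ that picks exactly one element from each $V_i$, $i\in L$. This immediately settles the ``Moreover'' part of the claim and reduces the problem to proving $L\in \mathbb{L}_7$ given that $|L|=3$.

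Next, I would translate the hypothesis that $\bigcup_{i\in L}V_i$ is a cover into a combinatorial condition on $L\subseteq [7]$. Fix an arbitrary line $L'\in \mathbb{L}_7$. By assumption~(b), there exists a member $C_{L'}\in \mathcal{C}$ contained in $\bigcup_{i\notin L'}U_i\cup \bigcup_{j\in L'}V_j$; in particular $C_{L'}\cap V_i=\emptyset$ for every $i\notin L'$. Since $\bigcup_{i\in L}V_i$ covers $\mathcal{C}$, it meets $C_{L'}$, so there is some $i\in L$ with $C_{L'}\cap V_i\neq \emptyset$, forcing $i\in L'$. Hence $L\cap L'\neq \emptyset$. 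Ranging over all $L'\in \mathbb{L}_7$, this shows that $L$, viewed as a subset of $[7]$, is a cover of the clutter $\mathbb{L}_7$.

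Finally, I would invoke the self-blocking property. Because $b(\mathbb{L}_7)=\mathbb{L}_7$, the minimal covers of $\mathbb{L}_7$ are precisely the lines of the Fano plane, each of cardinality three, so every cover of $\mathbb{L}_7$ has cardinality at least three, and any cover of cardinality exactly three is a minimal cover, i.e., a line. As $|L|=3$, this yields $L\in \mathbb{L}_7$, finishing the proof.

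There is no single technically hard step here; the linchpin is the middle paragraph, where assumption~(b) is used to bridge the geometric statement ``$\bigcup_{i\in L}V_i$ is a cover of $\mathcal{C}$'' with the purely combinatorial statement ``$L$ is a cover of $\mathbb{L}_7$''. Once this bridge is in place, \Cref{mono} controls the cardinality and the identity $b(\mathbb{L}_7)=\mathbb{L}_7$ identifies $L$ with a line.
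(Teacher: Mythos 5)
Your proposal is correct and matches the paper's argument: the paper likewise uses assumption (b) to show that $\bigcup_{i\in L}V_i$ must meet the member contained in each $\bigcup_{i\notin L'}U_i\cup\bigcup_{j\in L'}V_j$, forcing $L\cap L'\neq\emptyset$ for every $L'\in\mathbb{L}_7$, then concludes $L\in\mathbb{L}_7$ from $b(\mathbb{L}_7)=\mathbb{L}_7$ and $|L|\leq 3$, and obtains the ``moreover'' part from \Cref{mono}. The only difference is the order in which \Cref{mono} is invoked, which is immaterial.
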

\begin{cproof}
As (b) holds, $\bigcup_{i\in L}V_i$ intersects each $\bigcup_{i\notin L} U_i \cup \bigcup_{j\in L} V_j, L\in \mathbb{L}_7$, implying in turn that $L$ is a cover of $\mathbb{L}_7$. As $b(\mathbb{L}_7)=\mathbb{L}_7$ and $|L|\leq 3$, it follows that $L\in \mathbb{L}_7$. The second part follows from~\Cref{mono}.
\end{cproof}

\begin{claim} 
For each $L\in \mathbb{L}_7$, $\bigcup_{i\in L}V_i$ is a cover. 
\end{claim}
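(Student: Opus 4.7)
The plan is to proceed by contradiction. Suppose there is some $L^* \in \mathbb{L}_7$ for which $\bigcup_{i \in L^*} V_i$ is not a cover of $\mathcal{C}$; by hypothesis (c) this $L^*$ is unique, and by the transitive action of the automorphism group of the Fano plane on its lines I may assume $L^* = \{1, 2, 3\}$.

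First I would analyze an arbitrary member $C \in \mathcal{C}$ disjoint from $\bigcup_{i \in L^*} V_i$. For each $i \in L^*$ and each $u \in U_i$, \Cref{bipartite} gives a neighbour $v \in V_i$ in $G(\mathcal{C})$; then $\{u, v\}$ is a minimum cover of $\mathcal{C}$ that $C$ must intersect, and since $v \notin C$, we get $u \in C$. Thus $\bigcup_{i \in L^*} U_i \subseteq C$. Setting $W_C := \{i \in [7] : C \cap V_i = \emptyset\}$, we have $L^* \subseteq W_C$, and for each $L \in \mathbb{L}_7 \setminus \{L^*\}$ the cover $\bigcup_{i \in L} V_i$ meets $C$, so $L \not\subseteq W_C$. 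The six residual pairs $L \setminus L^*$ (as $L$ ranges over $\mathbb{L}_7 \setminus \{L^*\}$) are exactly the six edges of the complete graph $K_4$ on $\{4,5,6,7\}$, so $W_C \cap \{4,5,6,7\}$ must be an independent set of $K_4$, of size at most one.

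Next I would pass to the minor $\mathcal{C}' := \mathcal{C} \setminus \bigcup_{i \in L^*} V_i \,/\, \bigcup_{i \in L^*} U_i$. Iterating \Cref{bipartite}, $\mathcal{C}'$ is a clean tangled clutter with ground set $\bigcup_{i=4}^{7}(U_i \cup V_i)$, and it is nonempty since the contradiction assumption supplies at least one such $C$. For any pair $\{j, k\} \subseteq \{4,5,6,7\}$, every member of $\mathcal{C}'$ arises from a $C \in \mathcal{C}$ disjoint from $\bigcup_{i \in L^*} V_i$, and the bound $|W_C \cap \{j, k\}| \leq 1$ forces $C$ to meet $V_j \cup V_k$; hence $V_j \cup V_k$ is a cover of $\mathcal{C}'$. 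Enlarging to a superset, the union of the $V$-sides of the one or two bipartite components of $G(\mathcal{C}')$ that contain $V_j \cup V_k$ is itself a cover of $\mathcal{C}'$; but \Cref{mono} applied to $\mathcal{C}'$ forces any such monochromatic cover to involve at least three components---contradiction.

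The main obstacle is translating the failure of $\bigcup_{i \in L^*} V_i$ to be a cover of $\mathcal{C}$ into a contradiction via \Cref{mono}: one needs to verify that new size-two covers introduced by the deletion/contraction do not destroy the $V$-monochromatic structure of $V_j \cup V_k$ inside $G(\mathcal{C}')$. A careful case analysis of the possible new minimum covers of $\mathcal{C}'$, exploiting the tangledness and cleanness of $\mathcal{C}$ together with the $K_4$ constraint on $W_C$, is what ensures that at most two $V$-parts of $G(\mathcal{C}')$ suffice to cover $\mathcal{C}'$, so that \Cref{mono} delivers the contradiction.
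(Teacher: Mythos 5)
Your combinatorial setup is correct and clean: the reduction to a single failing line $L^*$, the observation that any member $C$ avoiding $\bigcup_{i\in L^*}V_i$ must contain $\bigcup_{i\in L^*}U_i$, and the fact that the six residual pairs $L\setminus L^*$ form $K_4$ on $[7]\setminus L^*$, so that $C$ misses $V_i$ for at most one $i\in[7]\setminus L^*$. But there is a genuine gap at the step ``Iterating \Cref{bipartite}, $\mathcal{C}'$ is a clean tangled clutter.'' \Cref{bipartite} only guarantees tangledness when you delete/contract the two sides of a \emph{single} connected component, and it cannot simply be iterated: after the first deletion/contraction the components of the bipartite graph may merge, so the next step is no longer of the form covered by the theorem, and the resulting minor may acquire a cover of cardinality at most one. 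This degenerate case is exactly why the paper's proof splits into $\tau(\mathcal{C}')\leq 1$ versus $\tau(\mathcal{C}')\geq 2$. Worse, the paper deliberately removes only \emph{two} of the three components of the failing line (it takes $\mathcal{C}'=\mathcal{C}\setminus(V_5\cup V_7)/(U_5\cup U_7)$ for $L^*=\{3,5,7\}$), precisely so that in the degenerate case the two members from hypothesis (b) indexed by the lines avoiding $\{5,7\}$ survive into the minor and force a contradiction on the location of the single leftover element $u$. In your version you remove all three components of $L^*$, and since every line of the Fano plane meets $L^*$, \emph{no} member guaranteed by (b) is disjoint from $\bigcup_{i\in L^*}V_i$; I do not see how you would dispatch a minimal cover $D\subseteq V_1\cup V_2\cup V_3\cup\{u\}$ with, say, $u\in V_4$ (this only yields a monochromatic cover meeting four components, which \Cref{mono} does not forbid). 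Your closing sentence acknowledges that ``new size-two covers'' need attention, but the issue is not only the structure of $G(\mathcal{C}')$; it is whether $\mathcal{C}'$ is tangled at all.

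Two smaller points. In the non-degenerate case your phrase ``the one or two bipartite components of $G(\mathcal{C}')$ that contain $V_j\cup V_k$'' hides a real sub-case: if $V_j$ and $V_k$ land on \emph{opposite} sides of a single merged component of $G(\mathcal{C}')$, no union of one side per component contains $V_j\cup V_k$ and \Cref{mono} does not apply to that pair. This is repairable by choosing the pair $\{j,k\}$ after inspecting $G(\mathcal{C}')$: with four indices distributed among the components, either two of them lie in distinct components, or all four lie in one component and by pigeonhole two of the $V_i$'s share a side; either way some pair yields a monochromatic cover meeting at most two components, contradicting \Cref{mono}. With that repair, your non-degenerate argument is a legitimate alternative to the paper's, which instead uses the explicit covers $B_L$ to pin down $G(\mathcal{C}')$ and then invokes \Cref{small-rank} to exhibit members of $\mathcal{C}'$ missed by $B_{\{1,2,3\}}$ or $B_{\{3,4,6\}}$. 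The essential missing idea remains the treatment of $\tau(\mathcal{C}')\leq 1$, which your choice of minor makes substantially harder than the paper's.
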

\begin{cproof}
We may assume because of (c) that for each $L\in \mathbb{L}_7-\{\{3,5,7\}\}$, $\bigcup_{i\in L}V_i$ contains a minimal cover $B_L$; we may assume by Claim~1 that $B_L$ has cardinality three and picks one element from each $V_i,i\in L$. It remains to prove that $V_3\cup V_5\cup V_7$ is a cover. Suppose otherwise. Let $\mathcal{C}':=\mathcal{C}\setminus (V_5\cup V_7)/ (U_5\cup U_7)$.

Assume in the first case that $\tau(\mathcal{C}')\leq 1$. That is, there is a minimal cover $D\in b(\mathcal{C})$ such that $D\cap (U_5\cup U_7)=\emptyset$ and $|D-(V_5\cup V_7)|\leq 1$. It follows from Claim~1 that $D-(V_5\cup V_7) = \{u\}$ for some $u\in V_3\cup U_1\cup U_2\cup U_3\cup U_4\cup U_6$. Our contrary assumption tells us that $u\notin V_3$. But then $D$ is disjoint from one of $$\bigcup_{i\notin L} U_i \cup \bigcup_{j\in L} V_j,\quad L=\{1,2,3\},\{3,4,6\},$$ a contradiction to (b).

Assume in the remaining case that $\tau(\mathcal{C}')\geq 2$. Then $\mathcal{C}'$ is clean tangled, and $G':=G(\mathcal{C}')$ has $G\setminus (U_5\cup V_5\cup U_7\cup V_7)$ as a subgraph. Then $G'$ is a bipartite graph where for each $i\in \{1,2,3,4,6\}$, $G'[U_i\cup V_i]$ is connected and has bipartition $\{U_i,V_i\}$. Observe that for $L=\{1,4,5\},\{2,4,7\},\{2,5,6\},\{1,6,7\}$, the set $B_L-(V_5\cup V_7)$ is a cardinality-two cover, and therefore a minimum cover, of $\mathcal{C}'$. As a consequence, $G'$ has an edge between $V_1,V_4$, an edge between $V_4,V_2$, an edge between $V_2,V_6$, and an edge between $V_6,V_1$.
Let $U:=U_1\cup U_2\cup V_4\cup V_6$ and $U':=V_1\cup V_2\cup U_4\cup U_6$. Then $G'[U\cup U']$ is connected and has bipartition $\{U,U'\}$. Since $G'[U_3\cup V_3]$ is also connected, $G'$ has at most two connected components. It therefore follows from~\Cref{small-rank}~(i)-(ii) that either $$U\cup U_3,U'\cup V_3\in \mathcal{C}' \quad\text{or}\quad U\cup V_3,U'\cup U_3\in \mathcal{C}'.$$ Observe that $B_L-(V_5\cup V_7) = B_L$ is a cover of $\mathcal{C}'$ for $L=\{1,2,3\},\{3,4,6\}$. However, $B_{\{1,2,3\}} \cap (U\cup U_3)=\emptyset$ and $B_{\{3,4,6\}} \cap (U'\cup U_3)=\emptyset$, a contradiction.

As a result, $V_3\cup V_5\cup V_7$ is a cover, as claimed.
\end{cproof}

We may assume that $\mathcal{C}$ is contraction minimal with respect to being tangled and satisfying (a)-(c). By Claims~1 and~2, for each $L\in \mathbb{L}_7$, there exists a minimal cover $B_L\in b(\mathcal{C})$ of cardinality three picking one element from each $V_i,i\in L$. For each $i\in [7]$, let $$X_i:=V_i\cap \left(\bigcup_{L\in \mathbb{L}_7} B_L\right);$$ notice that $1\leq |X_i|\leq 3$.

\begin{claim} 
For each $i\in [7]$, either $|X_i|=1$ and $|U_i|=|V_i|=1$, or $2\leq |X_i|\leq 3$ and $G[U_i\cup V_i]$ is a tree whose leaves are contained in $X_i$.
\end{claim}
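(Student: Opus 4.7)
The plan is to exploit the contraction-minimality of $\mathcal{C}$, established just before Claim~3, by arguing that if $G[U_i \cup V_i]$ admits a strictly smaller connected subgraph still containing $X_i$ (and retaining nonempty sides of the bipartition), then one can contract the missing vertices without destroying tangledness or conditions (a)-(c), contradicting minimality.

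The central intermediate claim I would first establish is: for every nonempty subset $J \subseteq (U_i \cup V_i) - X_i$ such that $G[(U_i \cup V_i) - J]$ is connected and both $U_i - J$ and $V_i - J$ are nonempty, the contraction $\mathcal{C}/J$ is a clean tangled clutter still satisfying (a)-(c), with its $i$-th connected component having bipartition $\{U_i - J, V_i - J\}$. Conditions (b) and (c) are the easier half: since $J$ avoids $X_i$, each of the size-three covers $B_L$ from Claims~1 and~2 is disjoint from $J$ and hence survives as a cover of $\mathcal{C}/J$, yielding (c); and each set $\bigcup_{i' \notin L} U_{i'} \cup \bigcup_{j \in L} V_j$ still contains a member of $\mathcal{C}/J$ (any member of $\mathcal{C}$ inside it descends, after possibly deleting $J$-elements, to a superset of a member of the contraction), yielding (b). The main obstacle is verifying (a): $G(\mathcal{C}/J)$ must still have exactly seven connected components with the claimed bipartitions, and $\mathcal{C}/J$ must still be tangled. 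Since contracting elements can create new minimum covers of size two — new edges in the bipartite graph — one must argue that none of these new edges merge two of the seven components, disturb the bipartite structure of the $i$-th component, or collapse the covering number below two. I expect this to rely on cleanness, on the connectedness of the surviving subgraph inside the $i$-th component (which supplies enough edges to keep it connected and bipartite), and on \Cref{mono} to rule out pathological monochromatic covers that could appear after contraction.

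Granted the intermediate claim, Claim~3 splits into two cases. If $|X_i| \in \{2,3\}$, then contraction-minimality together with the intermediate claim shows that $G[U_i \cup V_i]$ has no proper vertex-induced connected subgraph containing $X_i$ (with both sides of the bipartition nonempty); \Cref{trees} applied with $U = U_i$, $U' = V_i$, $X = X_i$ then forces $G[U_i \cup V_i]$ to be a tree whose leaves lie in $X_i$. If $|X_i| = 1$, say $X_i = \{x\} \subseteq V_i$, fix any neighbor $u_0 \in U_i$ of $x$ in $G$ (such a neighbor exists since $x$ lies on some edge of $G$ by \Cref{bipartite}) and set $J := (U_i - \{u_0\}) \cup (V_i - \{x\})$. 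If $|U_i| + |V_i| > 2$, then $J$ is nonempty, $G[\{u_0, x\}]$ is a single connected edge, and both sides of the bipartition remain nonempty, so the intermediate claim applied to this $J$ contradicts contraction-minimality; hence $|U_i| = |V_i| = 1$, completing the claim.
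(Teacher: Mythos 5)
Your proposal follows essentially the same route as the paper: exploit contraction-minimality by contracting $(U_i\cup V_i)-W$ for a minimal connected $W\supseteq X_i$, then apply \Cref{trees}. The one step you flag as "the main obstacle" --- that contraction might create new minimum covers that merge components or break the bipartitions --- is in fact a non-issue: since $b(\mathcal{C}/J)=b(\mathcal{C})\setminus J$, the minimal covers of $\mathcal{C}/J$ are exactly the minimal covers of $\mathcal{C}$ disjoint from $J$, so $G(\mathcal{C}/J)$ is just the induced subgraph $G-J$ and condition (a) follows directly from the connectedness of $G[(U_i\cup V_i)-J]$ together with the fact that no element lies in every member of $\mathcal{C}$ (which keeps $\tau(\mathcal{C}/J)=2$); neither cleanness nor \Cref{mono} is needed for that verification.
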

\begin{cproof}
Let $W$ be a subset of $V$ such that (1) $W\subseteq U_i\cup V_i$, (2) $X_i\subseteq W$, (3) $|W|\geq 2$, (4) $G[W]$ is connected, and (5) $W$ is minimal subject to (1)-(4). Let $\{U'_i,V'_i\}$ be the bipartition of $G[W]$ where $U'_i\subseteq U_i$ and $X_i\subseteq V'_i\subseteq V_i$. Notice that if $|X_i|=1$ then $|U'_i|=|V'_i|=1$, and if $2\leq |X_i|\leq 3$ then $G[W]$ must be a tree whose leaves are contained in $X_i$ by~\Cref{trees}. Let $I:=(U_i\cup V_i)-(U'_i\cup V'_i)$. Notice that $\mathcal{C}/ I$ is clean and tangled, and satisfies (a) and (b). Moreover, since $B_L\cap I=\emptyset$ for each $L\in \mathbb{L}_7$, $\mathcal{C}/I$ also satisfies (c). Our minimal choice of $\mathcal{C}$ implies that $I=\emptyset$, so $U'_i=U_i$ and $V'_i=V_i$, thereby finishing the proof of the claim.
\end{cproof}

\begin{claim} 
For each $i\in [7]$, $|X_i|=1$ and $|U_i|=|V_i|=1$.
\end{claim}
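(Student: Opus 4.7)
The plan is to contradict the contraction-minimality of $\mathcal{C}$ established just before Claim~3. Assume for a contradiction that $|X_1| \geq 2$ (reindexing if necessary). By Claim~3, the induced subgraph $G[U_1 \cup V_1]$ is a tree all of whose leaves lie in $X_1 \subseteq V_1$, and it has at least two leaves. I will produce a leaf $v \in X_1$ and argue that $\mathcal{C}' := \mathcal{C}/v$ is a proper contraction of $\mathcal{C}$ that is still clean, tangled, and satisfies (a)--(c) with respect to the updated bipartitions $U_i' := U_i$, $V_i' := V_i$ for $i \neq 1$, $U_1' := U_1$, and $V_1' := V_1 \setminus \{v\}$; this contradicts the minimality of $\mathcal{C}$.

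For any leaf $v$ of the tree, $\mathcal{C}/v$ inherits conditions (a) and (b). Because $v$ is a leaf, its unique neighbour $u \in U_1$ has degree at least two in the tree (no leaf lies in $U_1$ by Claim~3), so removing $v$ keeps the tree connected and no vertex becomes isolated. The minor $\mathcal{C}/v$ is clean, and since minimum covers of $\mathcal{C}/v$ are exactly the minimum covers of $\mathcal{C}$ avoiding $v$, one has $G(\mathcal{C}/v) = G - v$; hence $\mathcal{C}/v$ is tangled of rank~$7$ with the advertised bipartitions, giving (a). For (b), each $M_L := \bigcup_{i\notin L} U_i \cup \bigcup_{j\in L} V_j$ is a member of $\mathcal{C}$ by hypothesis~(b) and \Cref{core->cuboid-converse}, so $M_L - \{v\}$ contains a member of $\mathcal{C}/v$ lying inside $\bigcup_{i\notin L} U_i' \cup \bigcup_{j\in L} V_j'$.

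The main obstacle is condition (c). For $L \not\ni 1$ the candidate cover $\bigcup_{j\in L} V_j'$ equals $\bigcup_{j\in L} V_j$, which is a cover of $\mathcal{C}$ by Claim~2 and hence of $\mathcal{C}/v$. For $L \ni 1$, $\bigcup_{j\in L} V_j' = \bigcup_{j\in L} V_j - \{v\}$ fails to cover $\mathcal{C}$ exactly when some $C \in \mathcal{C}$ satisfies $C \cap \bigcup_{j\in L} V_j = \{v\}$; then $C \cap B_L \subseteq \{v\}$ and, since $B_L$ is a cover, $v \in B_L$, i.e., $B_L \cap V_1 = \{v\}$. So the number of $L$'s for which (c) newly fails in $\mathcal{C}/v$ is at most the number of lines $L$ with $B_L \cap V_1 = \{v\}$.

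It remains to choose $v$ so that this count is at most one, so that the ``at most one'' clause of (c) absorbs the single violation. The three lines of $\mathbb{L}_7$ through $1$ contribute three picks to $X_1$ via $L \mapsto B_L \cap V_1$, and every element of $X_1$ is picked at least once by the definition of $X_1$. Hence if $|X_1| = 2$ the picks distribute as $1+2$ and both elements of $X_1$ are leaves (since a tree with exactly two leaves is a path); if $|X_1| = 3$ the picks distribute as $1+1+1$, so every element of $X_1$, in particular every leaf lying in $X_1$, is picked exactly once. Either way there is a leaf $v \in X_1$ picked by exactly one line, and contracting this $v$ yields the desired proper contraction, contradicting the minimality of $\mathcal{C}$. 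Hence $|X_i| = 1$ for all $i$, and Claim~3 then forces $|U_i| = |V_i| = 1$.
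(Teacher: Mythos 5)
Your proof is correct and takes essentially the same approach as the paper's: contract a leaf of the tree $G[U_1\cup V_1]$ that belongs to exactly one of $B_{\{1,2,3\}},B_{\{1,4,5\}},B_{\{1,6,7\}}$ and contradict the contraction-minimality of $\mathcal{C}$. Your counting argument for why such a leaf exists and your verification that condition (c) survives the contraction (at most one line can newly fail, namely one with $B_L\cap V_1=\{v\}$) are precisely the details the paper leaves implicit.
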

\begin{cproof}
Suppose otherwise. We may assume that $G[U_1\cup V_1]$ is not an edge. It then follows from Claim~3 that $2\leq |X_1|\leq 3$ and $G[U_1\cup V_1]$ is a tree whose leaves are contained in $X_1$. Pick a leaf $u$ of the tree $G[U_1\cup V_1]$ that belongs to exactly one of $B_{\{1,2,3\}},B_{\{1,4,5\}},B_{\{1,6,7\}}$, and let $\mathcal{C}':=\mathcal{C}/ u$. Since $u$ is a leaf of $G[U_1\cup V_1]$, $\mathcal{C}'$ is clean and tangled, and satisfies (a) and (b). Moreover, as $u$ belongs to exactly one of $(B_L:L\in \mathbb{L}_7)$, $\mathcal{C}'$ also satisfies (c), a contradiction to the minimality of $\mathcal{C}$.
\end{cproof}

Let $\mathcal{C}':=\mathcal{C}/(U_1\cup \cdots\cup U_7)$. 

\begin{claim} 
$\mathcal{C}' \cong \mathbb{L}_7$.
\end{claim}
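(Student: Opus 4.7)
The plan is to identify the ground set of $\mathcal{C}'$ with $[7]$ in a natural way, exhibit each $L \in \mathbb{L}_7$ as both a member and a minimal cover of $\mathcal{C}'$, and then use the identically self-blocking property of $\mathbb{L}_7$ to rule out any other members.

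First, by Claim~4 we have $U_i = \{u_i\}$ and $V_i = \{v_i\}$ for each $i \in [7]$, so $\mathcal{C}$ has ground set $\{u_1, v_1, \dots, u_7, v_7\}$ of size $14$. After contracting $J := U_1 \cup \dots \cup U_7 = \{u_1, \dots, u_7\}$, the clutter $\mathcal{C}'$ lives on $\{v_1, \dots, v_7\}$, which I relabel by $v_i \mapsto i$ so that the ground set of $\mathcal{C}'$ is $[7]$.

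Next I would produce the members and the minimal covers of $\mathcal{C}'$ corresponding to $\mathbb{L}_7$. Fix $L \in \mathbb{L}_7$. By (b), the set $\bigcup_{i \notin L} U_i \cup \bigcup_{j \in L} V_j$ contains a member of $\mathcal{C}$; removing $J$ yields a member $C_L$ of $\mathcal{C}'$ with $C_L \subseteq L$ (after relabeling). On the cover side, Claims~1 and~2 of the lemma provide a minimal cover $B_L$ of $\mathcal{C}$ of cardinality three that picks exactly one element from each $V_j$, $j \in L$. Since $B_L$ is disjoint from $J$, the identity $b(\mathcal{C}/J) = b(\mathcal{C}) \setminus J$ shows that $B_L$ remains a minimal cover of $\mathcal{C}'$; after relabeling, $B_L = L$. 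Thus every $L \in \mathbb{L}_7$ is a minimal cover of $\mathcal{C}'$.

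Now I would invoke the identically self-blocking property $b(\mathbb{L}_7) = \mathbb{L}_7$. Any member of $\mathcal{C}'$ must intersect each cover $L \in \mathbb{L}_7$, so (as a subset of $[7]$) it is a cover of $\mathbb{L}_7$ and therefore contains some $L \in \mathbb{L}_7$. Applied to $C_L \subseteq L$ with $|L| = 3$, this forces $C_L = L$, so $L$ is itself a member of $\mathcal{C}'$. Applied to an arbitrary member $C \in \mathcal{C}'$, there exists $L \in \mathbb{L}_7$ with $L \subseteq C$; since $L$ is also a member of $\mathcal{C}'$ and $\mathcal{C}'$ is a clutter, $C = L$. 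Hence $\mathcal{C}' = \mathbb{L}_7$, as desired.

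The only nontrivial step is the translation between contraction in $\mathcal{C}$ and minors in $\mathcal{C}'$, namely the observation that a member of $\mathcal{C}$ sitting inside $\bigcup_{i \notin L} U_i \cup \bigcup_{j \in L} V_j$ really does give a member of $\mathcal{C}'$ sitting inside $L$, and that $B_L$ survives contraction as a minimal cover; both are immediate from the definitions once the singleton structure from Claim~4 is in place.
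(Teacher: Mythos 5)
Your proof is correct, and it reaches the conclusion by a route that is dual to the paper's. Both arguments begin the same way: relabel via the singleton structure from Claim~4, and observe that each $B_L$ survives contraction of $J=U_1\cup\cdots\cup U_7$ as a minimal cover of $\mathcal{C}'$ (via $b(\mathcal{C}/J)=b(\mathcal{C})\setminus J$), so that $\mathbb{L}_7\subseteq b(\mathcal{C}')$. From there the paper stays on the blocker side: it invokes Claim~1 to conclude that the $B_L$ are the \emph{only} minimal covers of cardinality at most three, deduces that any further minimal cover would have cardinality exactly four with complement a Fano line, and rules it out because such a cover would be disjoint from the member guaranteed by hypothesis~(b); it then concludes $b(\mathcal{C}')=\mathbb{L}_7$ and takes blockers. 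You instead work on the member side: hypothesis~(b) hands you, for each line $L$, a member of $\mathcal{C}'$ contained in $L$, and since every member of $\mathcal{C}'$ meets every $L\in\mathbb{L}_7$ it is a cover of $\mathbb{L}_7$ and hence contains a line; this squeezes $C_L=L$, gives $\mathbb{L}_7\subseteq\mathcal{C}'$, and the clutter property collapses any remaining member onto a line. Your version uses~(b) constructively rather than for a contradiction, and it does not need the uniqueness part of Claim~1 (only the existence of the $B_L$), which is a mild economy. One pedantic point: contracting $J$ from a member of $\mathcal{C}$ sitting inside $\bigcup_{i\notin L}U_i\cup\bigcup_{j\in L}V_j$ a priori yields only a \emph{set containing} a member of $\mathcal{C}'$, not necessarily a member itself; but that weaker statement is all your argument uses, so nothing breaks.
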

\begin{cproof}
We know that $B_L\in b(\mathcal{C}')$ for each $L\in \mathbb{L}_7$, and that by Claim~1, these are the only minimal covers of $\mathcal{C}'$ of cardinality at most three. After a possible relabeling of its elements, we may assume that $\mathcal{C}'$ has ground set $[7]$, and that $B_L=L$ for each $L\in \mathbb{L}_7$. We claim that $b(\mathcal{C}') =\mathbb{L}_7$. Suppose otherwise. Then $b(\mathcal{C}')$ has a member $B$ of cardinality at least four. As $\mathbb{L}_7\subseteq b(\mathcal{C}')$, it follows that $|B|=4$ and $B = [7] - L$ for some $L\in \mathbb{L}_7$. However, $B$ is also a minimal cover of $\mathcal{C}$ that is disjoint from $\bigcup_{i\notin L} U_i \cup \bigcup_{j\in L} V_j$, a contradiction to (b). As a result, $b(\mathcal{C}') =\mathbb{L}_7$, so $\mathcal{C}' = b(\mathbb{L}_7) = \mathbb{L}_7$, as claimed.
\end{cproof}

As a result, $\mathcal{C}'$ has an $\mathbb{L}_7$ minor, thereby finishing the proof of~\Cref{L7-minor}.
\end{proof}

\subsection{Proof of \Cref{main-L7}}\label{sec:fano/main}

Let us start with the following remark about the cocycle space of the Fano matroid:

\begin{RE}\label{PG22}
$\cuboid(\cocycle{PG(2,2)})$ is, after a possible relabeling, a clutter over ground set $\{1,2,\ldots,7,$ $\bar{1},\bar{2},\ldots,\bar{7}\}$ satisfying the following statements: \begin{itemize}
\item the members are $\{\bar{i}:i\in [7]\}$ and $\{i:i\notin L\} \cup \{\bar{j}:j\in L\}$ for all $L\in \mathbb{L}_7$,
\item the cardinality-three minimal covers are $\{\bar{i},\bar{j},\bar{k}\},\{\bar{i},j,k\},\{i,\bar{j},k\},\{i,j,\bar{k}\}$ for all $\{i,j,k\}\in \mathbb{L}_7$,
\item every cardinality-three minimal cover is contained in exactly two members.
\end{itemize}
\end{RE}

Let $S:=\cocycle{PG(2,2)}$. As $S$ is a binary space, it follows from \Cref{transitive} that $S\tr p = S$ for every point $p\in S$. In particular, every member of $\cuboid(S)$ can be treated as the first member $\{\bar{i}:i\in [7]\}$ above.

\begin{PR}\label{main-L7-PR}
Let $\mathcal{C}$ be a clean tangled clutter over ground set $V$ that has a unique fractional packing of value two, and of rank seven. Then $\mathcal{C}$ has an $\mathbb{L}_7$ minor.
\end{PR}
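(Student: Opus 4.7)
The plan is to invoke \Cref{L7-minor} on $\mathcal{C}$ after verifying its three hypotheses (a), (b), and (c). First, the hypothesis that $\mathcal{C}$ has a unique fractional packing of value two combined with \Cref{unique-simplex} gives that $\conv(\setcore{\mathcal{C}})$ is a simplex, so \Cref{main-PG}~$(\Rightarrow)$ yields $\setcore{\mathcal{C}}\cong\cocycle{PG(k-1,2)}$ for some integer $k\geq 1$. Since $\cocycle{PG(k-1,2)}$ lives in $\{0,1\}^{2^k-1}$ and $\rank{\mathcal{C}}=7$, we must have $k=3$, and therefore $\setcore{\mathcal{C}}\cong\cocycle{PG(2,2)}$.

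Next, I use the freedom in choosing the bipartition labels $(U_i,V_i)$ for each connected component of $G(\mathcal{C})$ — swapping $U_i$ and $V_i$ twists coordinate $i$ of the setcore — together with the freedom in relabeling the seven components to arrange that
\[
\setcore{\mathcal{C}} = \{\mathbf{1}\}\cup\{\chi_L : L\in\mathbb{L}_7\}
\]
with the Fano labeling used in \Cref{PG22}; this set is isomorphic to $\cocycle{PG(2,2)}$ via a twist by $\mathbf{1}$, which is allowed because $\cocycle{PG(2,2)}$ is a binary space closed under translation by any of its points (\Cref{transitive}). By \Cref{setcore}~(i), $\core{\mathcal{C}}$ is then a duplication of $\cuboid(\setcore{\mathcal{C}})$, and its eight members are $\bigcup_{i\in[7]}V_i$ (from the point $\mathbf{1}$) and, for each $L\in\mathbb{L}_7$, the set $\bigcup_{i\notin L}U_i\cup\bigcup_{j\in L}V_j$ (from the point $\chi_L$). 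Hypothesis (a) is exactly the rank assumption, and (b) follows because each $\bigcup_{i\notin L}U_i\cup\bigcup_{j\in L}V_j$ is a member of $\core{\mathcal{C}}\subseteq\mathcal{C}$.

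The main obstacle is verifying (c): for all $L\in\mathbb{L}_7$ but at most one, $\bigcup_{j\in L}V_j$ is a cover of $\mathcal{C}$. Since any two lines of the Fano plane share a point, the inequality $\sum_{j\in L}x_j\geq 1$ is satisfied by every point of $\setcore{\mathcal{C}}$, which shows that each $V_L:=\bigcup_{j\in L}V_j$ is already a cover of $\core{\mathcal{C}}$; the real difficulty is upgrading this to a cover of $\mathcal{C}$, whose members outside $\core{\mathcal{C}}$ might dodge $V_L$. I would argue by contradiction: suppose two distinct lines $L_1,L_2\in\mathbb{L}_7$ both fail, and pick a member $C_t\in\mathcal{C}$ disjoint from $V_{L_t}$ for $t=1,2$. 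Since every minimum cover of $\mathcal{C}$ is an edge of $G(\mathcal{C})$ with one endpoint in $U_i$ and one in $V_i$, and every vertex of $U_i$ is incident with such an edge, the condition $C_t\cap V_j=\emptyset$ forces $U_j\subseteq C_t$; hence $\bigcup_{j\in L_t}U_j\subseteq C_t$. Writing $\{k\}=L_1\cap L_2$, I would then pass to an appropriate minor such as $\mathcal{C}/U_k$ or $\mathcal{C}\setminus V_k$ (possibly followed by further deletions or contractions to restore tangledness and the relevant rank), use \Cref{bipartite} and \Cref{fp-recursive-2} to compute its setcore, and then combine \Cref{mono} with the Resolution Principle (\Cref{resolution}) to obtain a contradiction with the structure forced by $C_1$ and $C_2$. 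This is the delicate step, because covers of $\core{\mathcal{C}}$ are strictly weaker than covers of $\mathcal{C}$, so the argument must leverage the rigid combinatorics of the Fano plane (two lines meet in a unique point, three lines pass through each point) together with the cleanness of $\mathcal{C}$. Once (c) is secured, \Cref{L7-minor} immediately yields an $\mathbb{L}_7$ minor of $\mathcal{C}$.
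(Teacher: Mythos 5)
Your reduction to \Cref{L7-minor} is the right frame, and your verification of hypotheses (a) and (b) is correct: after normalizing the setcore to $\cocycle{PG(2,2)}$, the eight members of $\core{\mathcal{C}}$ are exactly $\bigcup_{j=1}^7 V_j$ and $\bigcup_{i\notin L}U_i\cup\bigcup_{j\in L}V_j$ for $L\in\mathbb{L}_7$. The gap is in (c), and it is not only that the argument is left unexecuted (``I would then pass to an appropriate minor \dots and obtain a contradiction'' is a plan, not a proof). The deeper issue is that the statement you attack --- that for the labeling you fix at the outset, at most one line $L$ has $\bigcup_{j\in L}V_j$ failing to be a cover of $\mathcal{C}$ --- is not forced by the structure, so a contradiction from two failing lines should not be expected. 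What the paper actually proves is this: for each Fano line $L=\{i,j,k\}$ there exist three \emph{special covers} (monochromatic minimal covers meeting only the components indexed by $L$) with pairwise distinct traces among the four even sets $\emptyset,\{i,j\},\{i,k\},\{j,k\}$. Hence per line at most one of the four candidate monochromatic sets fails to contain a cover, and nothing prevents the failing one from being the trace-$\emptyset$ set $\bigcup_{j\in L}V_j$ for several lines at once. The remedy is a counting argument: the $21$ special covers each lie in exactly two of the eight members of $\supp(y)$, so some member contains at least six of them, necessarily corresponding to six distinct lines; one then \emph{re-chooses} which member of $\core{\mathcal{C}}$ plays the role of $\bigcup_{j=1}^7 V_j$ so that those six special covers become trace-$\emptyset$, and only for that labeling does (c) hold.

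The existence of three special covers per line with distinct traces is itself the hardest step and is entirely absent from your sketch. The paper proves it by contradiction: if for the line $\{1,2,3\}$ all special covers had trace $\{1,2\}$ or $\{1,3\}$, say, then $\mathcal{C}':=\mathcal{C}\setminus V_1/U_1$ would, by \Cref{fp-recursive-2} and \Cref{main-PG}, have rank exactly $3$, yet every crossing edge of $G(\mathcal{C}')$ would be confined to $U_4\cup U_5$, $V_4\cup V_5$, $U_6\cup U_7$ or $V_6\cup V_7$, forcing at least four components. Your observation that a member $C_t$ disjoint from $\bigcup_{j\in L_t}V_j$ must contain $\bigcup_{j\in L_t}U_j$ is correct, but \Cref{mono} and \Cref{resolution} act only on sets already known to be covers of $\mathcal{C}$, which is precisely what you lack; so the proposed combination of tools does not close the argument.
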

\begin{proof}
Let $G:=G(\mathcal{C})$, and for each $i\in [7]$, let $\{U_i,V_i\}$ be the bipartition of the $i\textsuperscript{th}$ connected component of $G$. Let $y$ be the fractional packing of $\mathcal{C}$ of value two. As $\mathcal{C}$ has rank seven, it follows from \Cref{main-PG} that $\supp(y)$ is a duplication of $\cuboid(\cocycle{PG(2,2)})$. As $\supp(y) \subseteq \core{\mathcal{C}}$, it follows from~\Cref{core->cuboid} and~\Cref{PG22} that, after possibly relabeling and swapping $U_i,V_i, i\in [7]$, the following sets are the members of $\supp(y)$: $$\bigcup_{j=1}^7 V_j \quad\text{and}\quad \bigcup_{i\notin L} U_i \cup \bigcup_{j\in L} V_j \quad \forall L\in \mathbb{L}_7.$$ 

A subset $B\subseteq V$ is a {\it special cover of $\mathcal{C}$} if it is a monochromatic minimal cover intersecting at most three connected components of $G$.

\setcounter{claim_nb}{0}

\begin{claim} 
If $B$ is a special cover of $\mathcal{C}$, then \begin{itemize}
\item there is a unique $L\in \mathbb{L}_7$ such that $B\cap (U_i\cup V_i)\neq \emptyset$ for each $i\in L$, 
\item $\{i\in L: B\cap U_i\neq \emptyset\}$ has even cardinality, and
\item $B$ is contained in exactly two members of $\supp(y)$.
\end{itemize}
\end{claim}
\begin{cproof}
This follows immediately from~\Cref{PG22}.
\end{cproof}

Given a special cover $B$, we refer to $L$ from Claim~1 as the {\it Fano line corresponding to $B$}, and to $\{i\in L : B\cap U_i\neq \emptyset\}$ as the {\it trace of $B$}.

\begin{claim} 
For every Fano line $L\in \mathbb{L}_7$, there are three corresponding special covers with pairwise different traces. 
\end{claim}
\begin{cproof}
Suppose otherwise. We may assume by symmetry between the members of $\mathbb{L}_7$ that $L=\{1,2,3\}$. By Claim~1, every special cover corresponding to $L$ has trace $\emptyset,\{1,2\},\{1,3\}$ or $\{2,3\}$. We may assume by symmetry between the members of $\cuboid(\cocycle{PG(2,2)})$ that every special cover corresponding to line $L$, if any, has trace $\{1,2\}$ or $\{1,3\}$. Let $\mathcal{C}':=\mathcal{C}\setminus V_1/U_1$ and $G':=G(\mathcal{C}')$. By \Cref{fp-recursive-2}, $\mathcal{C}'$ is clean and tangled and has a unique fractional packing of value two, and given that $z$ is the fractional packing of $\mathcal{C}'$ of value two, $\supp(z) =\supp(y)\setminus V_1/U_1$. In particular, $\supp(z)$ is a duplication of $\cuboid(\cocycle{PG(1,2)})$ by \Cref{PG->PG}.
\Cref{main-PG} applied to $\mathcal{C}'$ now tells us that $$\rank{\mathcal{C}'} = 2^2-1=3.$$
Observe that for $i\in [7]-\{1\}$, $G[U_i\cup V_i]\subseteq G'[U_i\cup V_i]$, so $G'[U_i\cup V_i]$ is connected. Let us refer to the edges of $G'$ not contained in any $G'[U_i\cup V_i],i\in [7]-\{1\}$ as {\it crossing} edges. We claim that \begin{quote}
$(\star)$ for every crossing edge $\{u,v\}$, either $\{u,v\}\subseteq U_4\cup U_5$, $\{u,v\}\subseteq V_4\cup V_5$, $\{u,v\}\subseteq U_6\cup U_7$ or $\{u,v\}\subseteq V_6\cup V_7$.\end{quote} To this end, pick distinct $i,j\in [7]-\{1\}$ such that $u\in U_i\cup V_i$ and $v\in U_j\cup V_j$. Then $V_1\cup \{u,v\}$ contains a minimal cover of $\mathcal{C}$, which is inevitably special. It therefore follows from Claim~1 that $\{1,i,j\}\in \mathbb{L}_7$, and either $\{u,v\}\subseteq U_i\cup U_j$ or $\{u,v\}\subseteq V_i\cup V_j$. Since there is no special cover corresponding to line $\{1,2,3\}$ and trace either $\emptyset,\{2,3\}$, it follows that $\{i,j\}=\{4,5\}$ or $\{6,7\}$, so $(\star)$ holds. However, $(\star)$ implies that $G'$ has at least four connected components, so $\rank{\mathcal{C}'}\geq 4$, a contradiction.
\end{cproof}

\begin{claim} 
There is a member of $\supp(y)$ that contains six special covers corresponding to different Fano lines.
\end{claim}
\begin{cproof}
By Claim~2, there are $21=7\times 3$ special covers $B_1,\ldots,B_{21}$ such that for distinct $i,j\in [21]$, if $B_i$ and $B_j$ correspond to the same Fano line, then they have different traces. By Claim 1, each $B_i, i\in [21]$ is contained in exactly two members of $\supp(y)$. As a result, there is a member of $\supp(y)$ containing at least $\frac{21\times 2}{8}>5$ special covers among $B_1,\ldots,B_{21}$, as required.
\end{cproof}

We may assume that $\bigcup_{j=1}^7 V_j$ contains six special covers corresponding to different Fano lines. As $\mathcal{C}$ satisfies conditions (a)-(c), we may apply \Cref{L7-minor} to conclude that $\mathcal{C}$ has an $\mathbb{L}_7$ minor, as required.
\end{proof}

We are now ready for the main result of this section:

\begin{proof}[Proof of \Cref{main-L7}]
Let $\mathcal{C}$ be a clean tangled clutter with a unique fractional packing of value two and of rank more than three. Let $y$ be the fractional packing of $\mathcal{C}$ of value two. It then follows from \Cref{main-PG} that for some integer $k\geq 3$, $\mathcal{C}$ has rank $2^k-1$, and $\supp(y)$ is a duplication of $\cuboid(\cocycle{PG(k-1,2)})$. We prove by induction on $k\geq 3$ that $\mathcal{C}$ has an $\mathbb{L}_7$ minor. The base case $k=3$ follows from \Cref{main-L7-PR}. For the induction step, assume that $k\geq 4$. Let $\{U,U'\}$ be a connected component of $G(\mathcal{C})$, and let $\mathcal{C}':=\mathcal{C}\setminus U/U'$. By \Cref{fp-recursive-2}, $\mathcal{C}'$ is clean tangled and has a unique fractional packing of value two, and if $z$ is the fractional packing of $\mathcal{C}'$ of value two, then $\supp(z) = \supp(y)\setminus U/U'$. In particular, $\supp(z)$ is a duplication of $\cuboid(\cocycle{PG(k-2,2)})$ by \Cref{PG->PG}. Thus $\mathcal{C}'$ has rank $2^{k-1}-1$ by \Cref{main-PG}, so by the induction hypothesis, $\mathcal{C}'$ and therefore $\mathcal{C}$ has an $\mathbb{L}_7$ minor, thereby completing the induction step. This finishes the proof of \Cref{main-L7}.
\end{proof}

\subsection{Ideal clutters and an application}\label{sec:fano/apps}

\Cref{main-L7} has a geometric consequence; let us elaborate. A clutter $\mathcal{C}$ over ground set $V$ is {\it ideal} if the associated set covering polyhedron $$\left\{x\in \mathbb{R}^V_+:\sum_{v\in C}x_v\geq 1\quad C\in \mathcal{C}\right\}$$ is integral~\cite{Cornuejols94} (see also~\cite{Abdi-thesis}). It can be readily checked by the reader that the deltas and $\mathbb{L}_7$ are non-ideal clutters. (In fact, every identically self-blocking clutter different from $\{\{a\}\}$ is non-ideal~\cite{Abdi-ISBC}.) It can also be readily checked that every extended odd hole is non-ideal. It is well-known that a clutter is ideal if and only if its blocker is ideal~\cite{Fulkerson70,Lehman79}. In particular, the blocker of an extended odd hole is also non-ideal. Moreover, if a clutter is ideal, so is every minor of it~\cite{Seymour77}. Thus, every ideal clutter is clean.

\begin{theorem}\label{main-geometric}
Let $\mathcal{C}$ be a clean tangled clutter. If $\conv(\setcore{\mathcal{C}})$ is a simplex, then at least one of the following statements holds: \begin{enumerate}[(i)]
\item $\setcore{\mathcal{C}} = \{0,1\}$, i.e. $\core{\mathcal{C}}$ consists of two members that partition the ground set,
\item $\setcore{\mathcal{C}} \cong \{000,110,101,011\}$, i.e. $\core{\mathcal{C}}$ is a duplication of $Q_6$, or
\item $\mathcal{C}$ is non-ideal.
\end{enumerate}
\end{theorem}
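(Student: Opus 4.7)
The plan is to assemble this theorem directly from \Cref{main-PG} and \Cref{main-L7}. Assume $\conv(\setcore{\mathcal{C}})$ is a simplex. By \Cref{main-PG}, $\setcore{\mathcal{C}} = \cocycle{PG(k-1,2)}$ for some integer $k \geq 1$. Since $\setcore{\mathcal{C}} \subseteq \{0,1\}^r$ where $r = \rank{\mathcal{C}}$ and $|\cocycle{PG(k-1,2)}| = 2^k$ lies in dimension $2^k - 1$, we have $r = 2^k - 1$. The strategy is a case split on $k$.

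For $k = 1$, $\setcore{\mathcal{C}} = \cocycle{PG(0,2)} = \{0,1\}$, which is precisely conclusion (i); the description of $\core{\mathcal{C}}$ follows from \Cref{small-rank}~(i). For $k = 2$, $\setcore{\mathcal{C}} = \cocycle{PG(1,2)} = \{000, 110, 101, 011\}$, which is conclusion (ii); the description of $\core{\mathcal{C}}$ as a duplication of $Q_6$ then follows from \Cref{setcore}~(i) together with the fact that $Q_6 = \cuboid(\{000,110,101,011\})$.

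For $k \geq 3$, we have $\rank{\mathcal{C}} = 2^k - 1 \geq 7 > 3$. By \Cref{unique-simplex}, our simplex assumption guarantees that $\mathcal{C}$ has a unique fractional packing of value two, so \Cref{main-L7} applies and $\mathcal{C}$ has an $\mathbb{L}_7$ minor. Since $\mathbb{L}_7$ is non-ideal and idealness is preserved under taking minors, it follows that $\mathcal{C}$ is non-ideal, giving conclusion (iii).

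There is no real obstacle here: the theorem is essentially a packaging of the earlier results, and the only thing to verify is the purely bookkeeping correspondence between the three values of $k$ (equivalently, the three sizes of the projective geometry) and the three conclusions. The heavy combinatorial content has already been absorbed into \Cref{main-PG} and \Cref{main-L7}, together with the basic facts that $\mathbb{L}_7$ is non-ideal and that idealness is minor-closed.
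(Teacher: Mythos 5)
Your proposal is correct and takes essentially the same route as the paper: both arguments split according to the rank (equivalently, the parameter $k$ with $r=2^k-1$ given by \Cref{main-PG}), sending the case $r>3$ to \Cref{main-L7} and the non-idealness of $\mathbb{L}_7$, and identifying the small-rank cases with conclusions (i) and (ii). The detour through \Cref{unique-simplex} is harmless but unnecessary, since \Cref{main-L7} is stated directly under the hypothesis that $\conv(\setcore{\mathcal{C}})$ is a simplex.
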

\begin{proof}
Let $r:=\rank{\mathcal{C}}$. If $r>3$, then $\mathcal{C}$ has an $\mathbb{L}_7$ minor by \Cref{main-L7}, so (iii) holds in particular. Otherwise, $1\leq r\leq 3$. It follows from \Cref{main-PG} that $\setcore{\mathcal{C}}$ is isomorphic to either $\cocycle{PG(0,2)}=\{0,1\}$ or $\cocycle{PG(1,2)}=\{000,110,101,011\}$, so either (i) or (ii) holds, as required.
\end{proof}

Observe that the statement of Theorem~\ref{main-geometric} is geometric while our proof is purely combinatorial, further stressing the synergy between the combinatorics and the geometry of clean tangled clutters. Recently, the authors gave an example of an infinite class of clean tangled clutters (more precisely, \emph{ideal minimally non-packing} clutters with covering number two) that belong to category (ii) of \Cref{main-geometric}~\cite{Abdi-infinite}.

\section{Future directions for research}\label{sec:conclusion}

Clean tangled clutters were the subject of study in this paper. It was proved that the convex hull of the setcore of every such clutter is a full-dimensional polytope containing the center point of the hypercube in its interior (\Cref{setcore}). The setcore has a simplicial convex hull if and only if it is the cocycle space of a projective geometry over the two-element field (\Cref{main-PG}). Moreover, if the setcore has a simplicial convex hull, then the clutter has rank at most three or it has an $\mathbb{L}_7$ minor (\Cref{main-L7}).

We conclude the paper with three directions for future research.

Our results expose a fruitful interplay between the combinatorics and the geometry of clean tangled clutters. Further along these lines, and an extension of \Cref{setcore}, is a min-max relation that holds for such clutters and relates a geometric parameter to a combinatorial parameter. Given a clean tangled clutter $\mathcal{C}$ of rank $r$, denote by $\mu(\mathcal{C})$ the minimum cardinality of a monochromatic cover of $\mathcal{C}$, and by $\depth{\mathcal{C}}$ the maximum $d$ such that there exists a $d$-dimensional subhypercube of $\{0,1\}^{r}$ disjoint from $\setcore{\mathcal{C}}$. (If there is no monochromatic cover then $\mu(\mathcal{C}):=\infty$, and if $\setcore{\mathcal{C}}=\{0,1\}^{r}$ then $\depth{\mathcal{C}}:=-\infty$.)

\begin{theorem}[\cite{Abdi-duality}]
Let $\mathcal{C}$ be a clean tangled clutter. Then $\rank{\mathcal{C}}-\mu(\mathcal{C})=\depth{\mathcal{C}}$.
\end{theorem}

A clutter $\mathcal{C}$ \emph{embeds} $PG(k-2, 2)$ if some subset of $\mathcal{C}$ is a duplication of the cuboid of $\cocycle{PG(k-2, 2)}$. This notion was defined in~\cite{Abdi-kwise}. We conjecture that,

\begin{CN}\label{PG-con}
Every clean tangled clutter embeds a projective geometry over the two-element field.
\end{CN}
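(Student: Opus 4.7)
The plan is to locate, inside the setcore $S := \setcore{\mathcal{C}}$, an affinely independent subset $S^\star \subseteq S$ whose convex hull is a simplex containing $\frac12 \cdot {\bf 1}$ in its relative interior, and then to leverage \Cref{PG-simplex-3} to conclude that $S^\star$ is a duplication of $\cocycle{PG(k-1,2)}$ for some $k \geq 1$; the members of $\mathcal{C}$ witnessing $S^\star$ would then form the desired embedded projective geometry.

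To exhibit such an $S^\star$, I would consider the polytope $P$ of nonnegative tuples $\alpha = (\alpha_p : p \in S)$ satisfying $\sum_p \alpha_p = 1$ and $\sum_p \alpha_p \cdot p = \frac12 \cdot {\bf 1}$; by \Cref{setcore}~(ii)--(iii) together with \Cref{fp-cc}, $P$ is nonempty and in bijection with the fractional packings of $\mathcal{C}$ of value two. At any vertex $\alpha^\star$ of $P$, the support $S^\star := \{p \in S : \alpha^\star_p > 0\}$ must be affinely independent, since the $(r+1) \times |S^\star|$ equality-constraint matrix restricted to $S^\star$ has full column rank; moreover, $\frac12 \cdot {\bf 1}$ lies in the relative interior of the simplex $\conv(S^\star)$ by the strict positivity of $\alpha^\star$ on $S^\star$.

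The main obstacle comes next: applying \Cref{PG-simplex-3} to $S^\star$ requires $\cuboid(S^\star)$ to be clean, but $\cuboid(S^\star)$ is only a sub-clutter (not a minor) of $\cuboid(S) = \core{\mathcal{C}}$, so cleanness does not transfer automatically. My approach would be to construct a clean tangled minor $\mathcal{C}''$ of $\mathcal{C}$ whose setcore is isomorphic to $S^\star$. Guided by the affine dependencies of $S \setminus S^\star$ and by the bipartite graph $G(\mathcal{C})$, one would perform a sequence of deletions and contractions---analogous to the recursive reduction of \Cref{fp-recursive-2}, but without assuming uniqueness of the fractional packing at the outset---designed to eliminate every fractional packing of value two except $\alpha^\star$. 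The resulting $\mathcal{C}''$ would be clean by minor-closedness, tangled, and have a unique fractional packing of value two, so \Cref{main-PG} applied to $\mathcal{C}''$ yields $\setcore{\mathcal{C}''} \cong \cocycle{PG(k-1,2)}$ for some $k$.

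Finally, since $\core{\mathcal{C}''}$ is a duplication of $\cuboid(\cocycle{PG(k-1,2)})$ and its members descend from members of $\mathcal{C}$ (with contracted elements excised), one verifies that the corresponding subfamily of $\mathcal{C}$ forms the desired duplication, yielding the embedding. The hardest part will be the second step: extending the uniqueness machinery of \S\ref{sec:simplex->PG} to an arbitrary vertex $\alpha^\star$ of $P$---that is, producing a clean tangled minor of $\mathcal{C}$ whose unique fractional packing of value two is, in a controlled sense, the restriction of $\alpha^\star$. This appears to require a genuinely new structural insight into how fractional packings interact with minor operations beyond the unique-packing setting handled in the paper, and it is where I expect \Cref{PG-con} to meet real resistance.
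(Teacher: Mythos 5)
The statement you are asked to prove is \Cref{PG-con}, which the paper states as an open \emph{conjecture}: it offers no proof, observing only that \Cref{main-PG} settles the special case where $\conv(\setcore{\mathcal{C}})$ is a simplex. So there is no argument in the paper to compare yours against, and your proposal does not close the conjecture either. Your first step is sound: a vertex $\alpha^\star$ of the polytope $P=\{\alpha\geq {\bf 0}: \sum_p\alpha_p=1,\ \sum_p\alpha_p\cdot p=\tfrac12\cdot{\bf 1}\}$ does have affinely independent support $S^\star$, and $\tfrac12\cdot{\bf 1}$ lies in the relative interior of the simplex $\conv(S^\star)$. But the decisive step --- producing a clean tangled minor $\mathcal{C}''$ of $\mathcal{C}$ whose setcore is (isomorphic to) $S^\star$ --- is exactly the gap you yourself flag, and it is a genuine one, not a technicality. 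The only minor operations the paper shows to preserve cleanness and tangledness while controlling the core are the deletions/contractions $\mathcal{C}\setminus U/U'$ along connected components of $G(\mathcal{C})$ (\Cref{bipartite}, \Cref{fp-recursive}, \Cref{fp-recursive-2}); on the setcore these act by restricting to a coordinate hyperplane $\{x:x_i=j\}$ of the cube and reducing the rank. Iterating them cannot, in general, isolate an arbitrary vertex $\alpha^\star$ of $P$, since $S^\star$ need not arise as a sequence of coordinate restrictions of $S$ --- and any other deletion or contraction risks destroying tangledness or, worse, gives you no handle on cleanness of what remains. Without that step you cannot invoke \Cref{main-PG} or rule out the first alternative of \Cref{PG-simplex-3}, because $\cuboid(S^\star)$ is merely a subfamily of $\core{\mathcal{C}}$ and cleanness is not inherited by subfamilies (the paper makes this very point when explaining why the naive approach to \Cref{main-L7} fails).

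There is also a secondary issue at the end of your outline: even if you had the minor $\mathcal{C}''$, \Cref{PG-con} asks for a \emph{subset of $\mathcal{C}$ itself} that is a duplication of $\cuboid(\cocycle{PG(k-1,2)})$. Members of a minor lift cleanly to members of $\mathcal{C}$ when the minor is of the form $\mathcal{C}\setminus U/U'$ for components of $G(\mathcal{C})$ (every member avoiding $U$ contains $U'$), but for the more general minors your second step would require, a member of $\mathcal{C}''$ may come from several members of $\mathcal{C}$ whose traces on the contracted elements differ, and the lifted family need not be a duplication of any cuboid. In short: your reduction is a reasonable way to think about the conjecture, and it correctly localizes the difficulty, but the structural insight needed to carry out the middle step is precisely what is missing --- both from your proposal and from the literature the paper cites.
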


\noindent This conjecture has an intimate connection to \emph{dyadic} fractional packings of value two in clean tangled clutters; see~\cite{Abdi-dyadic}. Observe that \Cref{main-PG} proves \Cref{PG-con} when the setcore of the clutter has a simplicial convex hull. 

The following variant of \Cref{PG-con} has also been conjectured:

\begin{CN}[\cite{Abdi-kwise}]\label{ideal-con}
There exists an integer $\ell\geq 3$ such that every ideal tangled clutter embeds one of $PG(0,2),\ldots,PG(\ell-1,2)$.
\end{CN}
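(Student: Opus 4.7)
The plan is to exploit the chain ``ideal $\Rightarrow$ clean tangled $\Rightarrow$ setcore in $\{0,1\}^r$ with $\tfrac12\cdot\mathbf{1}$ in the interior'' and cut the setcore down to a simplex that \Cref{PG-simplex-3} can recognize. Let $\mathcal{C}$ be an ideal tangled clutter; since every ideal clutter is clean, \Cref{setcore} applies and yields $S:=\setcore{\mathcal{C}}\subseteq\{0,1\}^r$ with $\tfrac12\cdot\mathbf{1}$ interior to $\conv(S)$. By Carath\'eodory's theorem, some inclusion-wise minimal $S'\subseteq S$ satisfies $\tfrac12\cdot\mathbf{1}\in\conv(S')$; minimality forces $\tfrac12\cdot\mathbf{1}$ into the relative interior of the simplex $\conv(S')$. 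After projecting onto the set $I\subseteq[r]$ of coordinates on which the points of $S'$ are not all equal, the projected set $S''$ has a full-dimensional simplicial convex hull still containing $\tfrac12\cdot\mathbf{1}$ in its interior.

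I would then invoke \Cref{PG-simplex-3} on $S''$: either $S''$ is a duplication of $\cocycle{PG(k-1,2)}$ for some $k\geq 1$, in which case the members of $\core{\mathcal{C}}$ indexed by $S'$ witness the sought-after embedding in $\mathcal{C}$ (with the coordinates outside $I$ accounted for by duplication), or $\cuboid(S'')$ has a delta or the blocker of an extended odd hole as a \emph{minor}. The latter case is where all the work lies: because $\cuboid(S'')$ is only a subclutter of a contraction minor of $\mathcal{C}$ --- not itself a minor --- members of $\mathcal{C}\setminus\core{\mathcal{C}}$ may survive the contraction and provide extra small covers that destroy the would-be forbidden minor. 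This is the same obstruction the authors confront in the naive approach to \Cref{main-L7}, and I expect it to be the main obstacle here as well.

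To overcome it, I would imitate the strategy of \S\ref{sec:fano}: use \Cref{mono} and the Resolution Principle (\Cref{resolution}) to constrain the monochromatic covers of $\mathcal{C}$, use \Cref{fp-recursive-2} to strip off connected components of $G(\mathcal{C})$ and reduce the ambient rank while preserving enough of the setcore, and then argue in the spirit of \Cref{L7-minor} that any surviving member of $\mathcal{C}\setminus\core{\mathcal{C}}$ interfering with the desired minor either yields a new non-ideal minor of $\mathcal{C}$ directly or can be absorbed by additional deletions and contractions. The hope is that this lifting forces the projective-geometry alternative and bounds $k$ by a universal constant. In the special case where $\setcore{\mathcal{C}}$ itself is simplicial, \Cref{main-PG} combined with \Cref{main-L7} and the non-idealness of $\mathbb{L}_7$ already forces $\rank{\mathcal{C}}\leq 3$ and hence $\setcore{\mathcal{C}}$ isomorphic to $\cocycle{PG(0,2)}$ or $\cocycle{PG(1,2)}$; I would expect the lifting argument to propagate a similar rank bound to the general case, yielding $\ell=3$ as a plausible final value consistent with the hypothesis ``$\ell\geq 3$'' in \Cref{ideal-con}.
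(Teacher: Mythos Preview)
The statement you are trying to prove is \Cref{ideal-con}, which the paper explicitly records as an \emph{open conjecture} in \S\ref{sec:conclusion}; there is no proof in the paper to compare against. The only thing the paper establishes is the special case in which $\conv(\setcore{\mathcal{C}})$ is already a simplex, where \Cref{main-geometric} (via \Cref{main-PG} and \Cref{main-L7}) gives $\ell=2$. The general case remains open.

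Your proposal is not a proof but a programme, and you say so yourself (``The hope is that\ldots'', ``I would expect\ldots''). Two concrete gaps make the outline unworkable as written. First, even in your ``good'' branch---where $\cuboid(S'')$ is clean and \Cref{PG-simplex-3} yields $S''$ a duplication of $\cocycle{PG(k-1,2)}$---you obtain an embedding of \emph{some} projective geometry, with $k$ depending on $\mathcal{C}$; nothing in your argument bounds $k$ uniformly, so at best you would be attacking \Cref{PG-con}, not \Cref{ideal-con}. Your suggestion that the rank bound from the simplicial case ``propagates'' has no mechanism behind it: in the simplicial case the bound comes from \Cref{main-L7} producing an $\mathbb{L}_7$ minor of $\mathcal{C}$ itself, whereas your $S''$ sits inside a sub-simplex of the setcore and there is no analogue of \Cref{main-L7} for that situation. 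Second, in the ``bad'' branch $\cuboid(S'')$ has a delta or blocker-of-extended-odd-hole minor, and you correctly identify that this minor need not lift to a minor of $\mathcal{C}$ because $\cuboid(S'')$ is only a subfamily of (a projection of) $\core{\mathcal{C}}$. The tools you cite---\Cref{mono}, \Cref{resolution}, \Cref{fp-recursive-2}, \Cref{L7-minor}---are all tailored to the situation where the \emph{entire} core is the cuboid of a projective-geometry cocycle space; none of them gives leverage when $S'$ is an arbitrary Carath\'eodory sub-simplex of $S$, and you provide no substitute. In short, both branches of your dichotomy remain wide open, which is consistent with the conjecture's status in the paper.
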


\noindent This conjecture has an intimate connection to the idealness of \emph{$k$-wise intersecting clutters}~\cite{Abdi-kwise,Abdi-kwise-IPCO}. Observe that \Cref{main-geometric}, which is a consequence of \Cref{main-L7}, proves \Cref{ideal-con} for $\ell=3$ when the setcore of the clutter has a simplicial convex hull (in fact, $\ell=2$ suffices here).

\section*{Acknowledgements}

We would like to thank Tony Huynh, Bertrand Guenin, Dabeen Lee, and Levent Tun\c{c}el for fruitful discussions about various parts of this work. This work was supported by ONR grant 00014-18-12129, NSF grant CMMI-1560828, and NSERC PDF grant 516584-2018.

{\small \bibliographystyle{abbrv}\bibliography{references}}

\end{document}